\documentclass[11pt, reqno]{amsart}

\usepackage[T1]{fontenc}
\usepackage{microtype}
\usepackage{lmodern}

\usepackage{amssymb,mathtools}
\usepackage{colonequals}

\usepackage{graphicx}
\usepackage{booktabs}

\usepackage{enumitem}

\usepackage[
    colorlinks=true,
    linkcolor=black,
    citecolor=blue,
    urlcolor=blue
]{hyperref}
\let\oldeqref\eqref
\renewcommand{\eqref}[1]{{\hypersetup{linkcolor=blue}\oldeqref{#1}}}
\usepackage{orcidlink}

\usepackage{aliascnt}
\newtheorem{theorem}{Theorem}[section]
\newaliascnt{lemma}{theorem}
\newtheorem{lemma}[lemma]{Lemma}
\aliascntresetthe{lemma}
\newaliascnt{corollary}{theorem}
\newtheorem{corollary}[corollary]{Corollary}
\aliascntresetthe{corollary}
\newaliascnt{conjecture}{theorem}
\newtheorem{conjecture}[conjecture]{Conjecture}
\aliascntresetthe{conjecture}
\newaliascnt{claim}{theorem}
\newtheorem{claim}[claim]{Claim}
\aliascntresetthe{claim}
\newaliascnt{proposition}{theorem}
\newtheorem{proposition}[proposition]{Proposition}
\aliascntresetthe{proposition}

\theoremstyle{definition}
\newaliascnt{problem}{theorem}

\aliascntresetthe{problem}
\newaliascnt{definition}{theorem}

\aliascntresetthe{definition}

\theoremstyle{remark}
\newaliascnt{example}{theorem}
\newtheorem{example}[example]{Example}
\aliascntresetthe{example}
\newaliascnt{remark}{theorem}
\newtheorem{remark}[remark]{Remark}
\aliascntresetthe{remark}

\usepackage[capitalise]{cleveref}
\crefname{conjecture}{conjecture}{conjectures}
\Crefname{conjecture}{Conjecture}{Conjectures}
\DeclareMathOperator{\Sym}{Sym}
\DeclareMathOperator{\wt}{wt}
\DeclareMathOperator{\disc}{disc}
\newcommand{\ZZ}{\mathbb{Z}}
\newcommand{\RR}{\mathbb{R}}

\title[Supersingular Curves Without Inseparable Small Endomorphisms]{Supersingular Elliptic Curves Without Inseparable Small Degree Endomorphisms}
\keywords{supersingular elliptic curves, inseparable endomorphisms, Frobenius conjugate, ternary quadratic forms, isogenies}
\date{September 3, 2026}

\author[Nicolas Swanson]{Nicolas Swanson\,\orcidlink{0009-0009-0770-7213}}
\address{Department of Combinatorics and Optimization,
University of Waterloo, Waterloo, Ontario, Canada}
\email{nswanson@uwaterloo.ca}

\begin{document}
\begin{abstract}
For a fixed characteristic $p$, let $\delta(p)$ denote the smallest degree needed to guarantee the existence of an isogeny from any supersingular curve $E$ to its Frobenius conjugate $E^{(p)}$.
We translate existence questions concerning $\delta(p)$ into questions about positive definite integral ternary quadratic forms and use Voronoi's theory of perfect forms in dimension three to characterize when $\delta(p)$ is nearly maximal.
For sufficiently large primes, we show that $\delta(p)$ attains its upper bound precisely when $p$ is represented by one of nine explicit cubic polynomials, reducing the infinitude of such primes to whether one of these cubics takes prime values infinitely often.
We also prove that, for almost all primes, $\delta(p)$ lies at least $p^{1/6-o(1)}$ below its upper bound.
\end{abstract}

\maketitle

Let $p$ be prime, and let $E/\overline{\mathbb F}_p$ be a supersingular elliptic curve over an algebraically closed field of characteristic $p$. Computing the endomorphism ring of $E$ is a central problem in isogeny-based cryptography. A useful algorithmic approach is to search for inseparable endomorphisms \cite{fuselier_computing_2025}. That is, let $E^{(p)}$ denote the Frobenius conjugate of $E$, and consider the Frobenius isogeny
\[
    F\colon E\longrightarrow E^{(p)},
    \qquad (x,y)\longmapsto(x^p,y^p),
\]
which has degree $p$. If one can find a separable isogeny $\psi\colon E^{(p)}\to E$, then postcomposing $F$ with $\psi$ yields an inseparable endomorphism $\psi\circ F$ of degree $p\deg(\psi)$. Conversely, every inseparable endomorphism of $E$ factors through $F$ in this way. Thus, for algorithms using this strategy, the search for a small inseparable endomorphism reduces to finding a small degree isogeny between $E$ and $E^{(p)}$; by duality, either direction suffices.

In this setting, Aubry, Oyono, and Vincent \cite{aubry_minimal_2026} introduced
\[
    \delta_E\colonequals\min_{\varphi\colon E\to E^{(p)}}\deg(\varphi)
    \qquad\text{and}\qquad
    \delta(p)\colonequals\max_{E/\overline{\mathbb F}_p\text{ s.s.}}\delta_E.
\]
The quantity $\delta_E$ is the least auxiliary degree needed to obtain an inseparable endomorphism through Frobenius, while $\delta(p)$ records the worst case among curves in characteristic $p$. Understanding the asymptotic behavior of $\delta(p)$ is therefore important for cryptographic algorithms that exploit the Frobenius isogeny. Aubry, Oyono, and Vincent made progress toward this understanding by establishing the following general upper bound
\cite[Theorem~1.1]{aubry_minimal_2026}:
\[
    \delta(p)\le\left\lfloor\sqrt[3]{p/2}\right\rfloor.
\]
For comparison, Elkies gives a uniform $O(p^{2/3})$ bound for the least degree of a non-scalar endomorphism of an arbitrary supersingular elliptic curve \cite[Section~4]{elkies_existence_1987}.
Wesolowski subsequently demonstrated the significance of this smaller $O(p^{1/3})$ bound by heuristically solving the endomorphism ring problem in expected time and memory $p^{1/3+o(1)}$ \cite[Corollary~1.2 and Section~1.4]{wesolowski_supersingular_2026}.

Beyond their algorithmic applications to endomorphism-ring computation, the quantities $\delta_E$ and $\delta(p)$ raise several natural arithmetic questions. Which values of $\delta_E$ can occur in a fixed characteristic? Is the upper bound for $\delta(p)$ attained for infinitely many primes? More broadly, how is $\delta(p)$ distributed as $p$ varies? In this work, we make progress toward each of these questions:
\begin{itemize}
    \item every finite set of positive integers occurs simultaneously as values of $\delta_E$ in infinitely many prime characteristics,
    \item the upper bound for $\delta(p)$ is attained for infinitely many primes if and only if at least one of nine explicit cubic polynomials takes prime values infinitely often, and
    \item for almost all primes $p$, the difference between $\delta(p)$ and its upper bound is at least $p^{1/6-o(1)}$.
\end{itemize}

All three results come from combining a classical correspondence between maximal quaternion orders and ternary quadratic forms with Aubry and Vincent's characterization of inseparable endomorphism degrees \cite[Theorem~1.1]{aubry_inseparable_2026}.
Namely, a positive integer $n$ occurs as $\delta_E$ for some supersingular elliptic curve $E/\overline{\mathbb F}_p$ if and only if there is a positive definite integral ternary quadratic form with minimum $n$ and discriminant $p$.

In \Cref{sec:existence}, we establish the criterion above. We then show that every sufficiently large prime in a suitable arithmetic progression is the common discriminant of ternary quadratic forms realizing any prescribed finite collection of positive minima. Applying the criterion to this construction proves and strengthens \cite[Conjectures~6.3 and~6.4]{aubry_minimal_2026}.

In \Cref{sec:near-perfect-forms}, we turn to ternary quadratic forms whose minima are close to the largest values permitted by their discriminants. The main technical work of the paper is an explicit classification and parameterization of these forms. Combined with the criterion above, this determines the supersingular curves $E$ for which $\delta_E$ is near-maximal, and hence the primes $p$ for which $\delta(p)$ is near-maximal.

In \Cref{sec:wisde-primes}, we use this classification to obtain finite, computable families of cubic polynomials that parameterize the prime characteristics in which $\delta_E$ can be near-maximal. Their effective enumeration yields an algorithm, polynomial in the bit length of $p$, for computing $\delta(p)$ when it is near-maximal\footnote{We provide an implementation of this algorithm in SageMath \cite{sagemath}, available in \cite{swanson_computational_2026}.}.

Restricting to the maximal case, in which there exists a supersingular elliptic curve $E/\overline{\mathbb F}_p$ satisfying $\delta_E=\delta(p)=\lfloor\sqrt[3]{p/2}\rfloor$,
we obtain a particularly explicit classification. Following \cite[Definition~6.1]{aubry_minimal_2026}, we call a prime with this property \emph{wisde}, short for \emph{without inseparable small degree endomorphisms}. Specifically, we show that every sufficiently large prime $p$ is wisde if and only if $p=P_i(n)$ for some $n\in\ZZ$ and some $i\in\{1,\ldots,9\}$, where the nine cubic polynomials $P_1,\ldots,P_9$ are displayed in \eqref{eq:cubic-families}. 
Consequently, there are infinitely many wisde primes if and only if one of these cubics takes prime values infinitely often. Establishing this for even one of the nine cubics lies beyond current number-theoretic techniques, but the Bateman--Horn conjecture \cite{bateman_heuristic_1962} predicts an affirmative answer and gives an explicit asymptotic for the number of wisde primes up to $X$.

Finally, in \Cref{sec:wisde-deficits}, we study the typical gap between $\delta(p)$ and its upper bound. The computations in \cite[Section~5]{aubry_minimal_2026} led to the conjecture that this gap is $O(\log p)$ \cite[Conjecture~6.8]{aubry_minimal_2026}. We show instead that, for a set of primes $p$ of relative density one,
\[
    \left\lfloor\sqrt[3]{p/2}\right\rfloor-\delta(p)
    >p^{1/6}(\log p)^{-1/2-\varepsilon}
\]
for every $\varepsilon>0$. In particular, the gap is at least $p^{1/6-o(1)}$ for almost all primes.
Although this shows that the typical gap grows at least as a positive power of $p$, it does not change the $p^{1/3}$ asymptotic relevant to the endomorphism-ring algorithms mentioned above. In fact, our heuristic analysis leads us to conjecture that the gap is $p^{1/6+o(1)}$ for almost all primes. Thus, although primes for which $\delta(p)$ attains its upper bound are sparse, we expect that upper bound to remain asymptotically sharp for almost all primes.

\section*{Acknowledgments}
The author would like to thank David Jao and Bruno Sterner for their valuable editorial and expository feedback. 
The author also thanks Travis Morrison for helpful discussions that shaped the framing and presentation of Aubry, Oyono, and Vincent's work in \Cref{sec:existence}.
The author acknowledges support of the NSERC Alliance Consortia Quantum Grant ``Accelerating the transition to quantum-resistant cryptography'' (ALLRP 578463--2022), the NSERC Discovery Grant program, the NSERC Canada Graduate Research Scholarship program, and CryptoWorks21.

\section{Preliminaries}
\label{sec:preliminaries}
\subsection{Rank 3 Lattices and Ternary Quadratic Forms}
Two ternary quadratic forms $Q$ and $Q'$ are \emph{integrally equivalent} if $Q(\vec x)=Q'(U\vec x)$ for some \emph{unimodular matrix} $U\in\operatorname{GL}_3(\ZZ)$. There are several notions of reduction that select a unique representative from each integral equivalence class. Let $Q$ be an arbitrary positive definite integral ternary quadratic form and write
\[
    Q=aX^2+bY^2+cZ^2+rYZ+sXZ+tXY.
\]
We call $a,b,c\in\ZZ$ the \emph{diagonal coefficients} of $Q$ and $r,s,t\in\ZZ$ its \emph{cross coefficients}. We say that $Q$ is \emph{Eisenstein-reduced} if
\[
    \begin{cases}
        0<a\le b\le c,\\
        |s|, |t| \le a \text{ and } |r| \le b,\\
        a+b+r+s+t\ge0, \\
        r,s,t \text{ all have the same sign},
    \end{cases}
\]
together with the boundary conditions in \cite[Part~III, pp.~164--171]{dickson_studies_1930}, which ensure uniqueness in the equality cases. 
By changing the signs of $Y$ and $Z$ when necessary, we may replace the common-sign condition on $r,s,t$ by $s,t\ge0$, while allowing $r$ to be negative. This makes the last two conditions above become
\[
\begin{cases}
    s+t-r\le a+b,\\
    s,t\ge0, \text{ while } r \text{ may be negative}.
\end{cases}
\]
We call the resulting representative \emph{sign-normalized Eisenstein-reduced}, and note that it is obtained uniquely from an Eisenstein-reduced representative by a simple sign change.


A rank $3$ quadratic lattice $(L,Q)$ is a rank $3$ lattice $L$ equipped with an integral ternary quadratic form $Q$. After fixing a basis, there is a unique symmetric matrix $G_Q$ and a unique integral matrix $A_Q\colonequals 2G_Q$ such that
\[
Q(\vec x)=\vec x^{\,T}G_Q\vec x=\frac{1}{2}\vec x^{\,T}A_Q\vec x.
\]
If $Q$ is positive definite, it defines a norm form and an inner product on $L$ for which $G_Q$ is the Gram matrix.
We take the \emph{discriminant} of $Q$ to be
\[
    \disc(Q)\colonequals4\det(G_{Q}) = \frac{1}{2}\det(A_{Q}).
\]
We use lowercase $\disc(Q)$ to distinguish our convention, as used in \cite[Chapter~22]{voight_quaternion_2021}, from the $\operatorname{Discr}(Q) = \det(G_Q)$ used by \cite[Section~4.1]{aubry_minimal_2026} and others. 

The \emph{successive minima} of a quadratic lattice $(L, Q)$ with $Q$ positive definite are defined by
\[
    D_i=\min\left\{D : \operatorname{rank}\{x\in L:Q(x)\le D\}\ge i\right\}.
\]
A basis $\{\beta_1, \beta_2, \beta_3\}$ of $L$ with $Q(\beta_i)=D_i$ is called a \emph{basis of successive minima}, and in rank $3$, such a basis always exists.
If $Q$ is Eisenstein-reduced or sign-normalized Eisenstein-reduced, then the standard basis of $\ZZ^3$ is a basis of successive minima \cite[Part~III, pp.~164--171]{dickson_studies_1930}. Thus, its diagonal coefficients $a,b,c$, equivalently the diagonal entries of $G_Q$, are the successive minima $D_1,D_2,D_3$. We define the \emph{minimum} of a positive definite integral ternary quadratic form $Q$ by
\[
    \min(Q)\colonequals\min_{\vec x\in\ZZ^3\setminus\{0\}}Q(\vec x).
\]
The \emph{minimal vectors of $Q$ modulo negation} are denoted by
\[
    S(Q)\colonequals\{\vec x\in\ZZ^3:Q(\vec x)=\min(Q)\}/\{\pm1\}.
\]
Thus, for an Eisenstein-reduced or sign-normalized Eisenstein-reduced form $Q$, we have $\min(Q)=D_1=a$, and the representatives of $S(Q)$ are precisely the vectors attaining the first successive minimum. Also, for such a form, the \emph{Hadamard bound} and the \emph{Hermite bound} state, respectively, that
\begin{equation}
    \label{eq:hadamard-hermite}
    \det(G_Q)\le D_1D_2D_3\le 2\det(G_Q).
\end{equation}

\subsection{Voronoi Theory and the Perfect Form}

Equality in the right-hand Hermite inequality in \eqref{eq:hadamard-hermite} holds if and only if the lattice is integrally equivalent to a scaled copy of the $A_3$ root lattice \cite[Theorem~2.6.8, p.~50, and Theorem~6.2.1, pp.~194--195]{martinet_perfect_2003}. The $A_3$ root lattice is the rank $3$ sublattice of $\RR^4$ generated by $\{e_i-e_j:1\le i\ne j\le4\}$. The standard basis and inner product on $\RR^4$ induce (after scaling by $1 / 2$) the positive definite integral ternary quadratic form
\[
    Q_0\colonequals X^2+Y^2+Z^2+YZ+XZ+XY.
\]

This extremal lattice and form motivate the Voronoi-theoretic framework we describe next. For any integral lattice $L$, write $L^\vee = \operatorname{Hom}_\ZZ(L, \ZZ)$ for its dual. The space of integral ternary quadratic forms is $\Sym^2((\ZZ^3)^\vee)$. Up to positive scaling and integral equivalence, the form $Q_0$ is the unique \emph{perfect} ternary quadratic form, meaning
\[
    \operatorname{span}_{\RR}\{\vec x\vec x^{\, T}:\vec x\in S(Q_0)\}
    = \Sym^2(\RR^3) \cong \left(\Sym^2((\ZZ^3)^\vee) \otimes_{\ZZ}\RR\right)^\vee.
\]
This characterization is all we need from the general theory of perfect forms. Each element of $S(Q_0)$ is a pair $\{\pm\vec v\}$, and either representative gives the same tensor $\vec v\vec v^{\,T}$. Whenever explicit representatives are needed, we use
\begin{equation}
    \label{eq:fixedvi}
    (\vec v_1,\ldots,\vec v_6)
    =
    \left(
        \begin{bmatrix} 1\\0\\0 \end{bmatrix},
        \begin{bmatrix} 0\\1\\0 \end{bmatrix},
        \begin{bmatrix} 0\\0\\1 \end{bmatrix},
        \begin{bmatrix} 1\\-1\\0 \end{bmatrix},
        \begin{bmatrix} 1\\0\\-1 \end{bmatrix},
        \begin{bmatrix} 0\\1\\-1 \end{bmatrix}
    \right).
\end{equation}
The tensors $\vec v_i\vec v_i^{\,T}$ form a basis of $\Sym^2(\ZZ^3)$; we denote its dual basis in $\Sym^2((\ZZ^3)^\vee)$ by $\eta_1,\ldots,\eta_6$, so that $\eta_j(\vec v_i)=\delta_{ji}$. Thus, every integral ternary quadratic form $h$ can be expressed uniquely in terms of its evaluations at the minimal vectors of $Q_0$ as
\[
    h=\sum_{i=1}^6 h(\vec v_i)\eta_i.
\]
We denote the coordinates of $h$ in this basis by $\lambda=(h(\vec v_1),\ldots,h(\vec v_6))$. Conversely, given $\lambda=(\lambda_1,\ldots,\lambda_6)\in\ZZ^6$, we define $h_\lambda$ as the unique integral ternary quadratic form with coordinates $\lambda$ in the $\eta_i$ basis. Explicitly, with the representatives chosen in \eqref{eq:fixedvi},
\begin{align}
    \label{eq:hlam-expansion}
    h_\lambda &= \lambda_1X^2+\lambda_2Y^2+\lambda_3Z^2+(\lambda_2+\lambda_3-\lambda_6)YZ \notag\\
    &\quad+(\lambda_1+\lambda_3-\lambda_5)XZ+(\lambda_1+\lambda_2-\lambda_4)XY.
\end{align}
For a coordinate vector $\lambda\in\ZZ^6$, define its \emph{weight} and \emph{minimum} by $\wt(\lambda)\colonequals\sum_i\lambda_i$ and $\lambda_{\min}\colonequals\min_i\lambda_i$, respectively.

In the realization of $A_3$ by differences of the four standard basis vectors $\vec e_1,\ldots,\vec e_4$, permutations of these vectors, together with negation, give
$\operatorname{Aut}(Q_0)\cong\operatorname{Aut}(A_3)\cong S_4\times\{\pm1\}$ \cite[pp.~147, 150]{scott_construction_1966}. 
Because negation fixes every element of $S(Q_0)$, the action of $\operatorname{Aut}(Q_0)$ on $S(Q_0)$ factors through $\operatorname{Aut}(Q_0)/\{\pm1\}\cong S_4$.
Identifying $S(Q_0)$ with the unordered pairs $\{a,b\}\subseteq\{1,2,3,4\}$ via $\{\pm(\vec e_a-\vec e_b)\}\leftrightarrow\{a,b\}$, we write $\lambda_{\{a,b\}}$ for the corresponding coordinate.
This implies that the parameterization $\lambda\mapsto h_\lambda$ is $S_4$-equivariant: if $g_\pi\in\operatorname{Aut}(Q_0)$ represents $\pi\in S_4$ and
\[
    (\pi\cdot\lambda)_{\{\pi(a),\pi(b)\}}=\lambda_{\{a,b\}},
\]
then
\begin{equation}
    \label{eq:evaluation-equivariance}
    h_{\pi\cdot\lambda}=h_\lambda\circ g_\pi^{-1}.
\end{equation}

\subsection{The Gross Lattice}
Let $B_{p,\infty}$ denote the quaternion algebra over $\mathbb{Q}$ ramified at $p$ and at $\infty$. By the Deuring correspondence, the endomorphism ring of any supersingular elliptic curve over $\overline{\mathbb F}_p$ is isomorphic to a maximal order $\mathcal O\subset B_{p,\infty}$ \cite[Theorem~42.1.9]{voight_quaternion_2021}. Under this correspondence, an endomorphism $\alpha\in\operatorname{End}(E)$ maps to a quaternion $q\in\mathcal O$ satisfying $\operatorname{trd}(\alpha)=\operatorname{trd}(q)$ and $\deg(\alpha)=\operatorname{nrd}(q)$.

A central object in this setting is the Gross lattice associated with a supersingular elliptic curve. Define $\tau\colon B_{p,\infty}\to B_{p,\infty}$ by $\tau(x)=2x-\operatorname{trd}(x)$. For an order $\mathcal O$ in $B_{p,\infty}$, its \emph{Gross lattice} is $\mathcal O^T\colonequals\tau(\mathcal O)$. In particular, if $E$ is supersingular and $\operatorname{End}(E)\cong\mathcal O$, then $\mathcal O^T$ is the Gross lattice associated with $E$. The map $\tau$ induces an isomorphism of lattices $\mathcal O/\ZZ\cong\mathcal O^T$. Thus, $\mathcal O^T$ represents the nonscalar endomorphisms of $E$ under the Deuring correspondence. It has rank $3$ and lies in the trace-zero sublattice of $\mathcal O$.

\section{Existence of Supersingular Curves at Prescribed Distances from Their Frobenius Conjugates}
\label{sec:existence}

\subsection{An Integral Ternary-Form Reformulation}

As discussed in the introduction, finding an isogeny $\varphi\colon E\to E^{(p)}$ of degree $\ell$ is equivalent, via composition with or factorization through the Frobenius isogeny, to finding an inseparable endomorphism of $E$ of degree $\ell p$. Aubry and Vincent prove the following result \cite[Theorem~1.1]{aubry_inseparable_2026}.
\begin{theorem}
    \label{thm:av_main}
    Let $p$ be prime and let $E/\overline{\mathbb F}_p$ be a supersingular elliptic curve with associated Gross lattice $\mathcal O^T$. Then for any positive integer $\ell$, there exists a trace-zero endomorphism of $E$ of degree $\ell p$ if and only if $\mathcal O^T$ contains a rank $2$ sublattice of determinant $4\ell p$.
\end{theorem}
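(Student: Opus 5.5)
The plan is to go through the cross product of trace-zero quaternions. Let $B^0$ be the trace-zero part of $B_{p,\infty}$. For $x,y\in B^0$ we have $xy=-\langle x,y\rangle+x\times y$, where $\langle x,y\rangle=\tfrac12\operatorname{trd}(x\bar y)$ and $x\times y=\tfrac12(xy-yx)\in B^0$. The Lagrange identity then gives
\[
\operatorname{nrd}(xy-yx)=4\bigl(\operatorname{nrd}(x)\operatorname{nrd}(y)-\langle x,y\rangle^2\bigr),
\]
so $\operatorname{nrd}(xy-yx)$ is $4$ times the Gram determinant of $\langle x,y\rangle$ with respect to the norm form. Thus rank $2$ sublattices of $\mathcal O^T$ correspond to commutators $[x,y]$, and the determinant of the sublattice is read off from $\operatorname{nrd}([x,y])$, up to a fixed normalization. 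The theorem should follow by showing two things: such commutators, suitably rescaled, are exactly trace-zero elements of $\mathcal O$ whose reduced norm is divisible by $p$; and the determinant $4\ell p$ matches degree $\ell p$ under this rescaling.

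\textbf{Key steps, in order.} First, fix the determinant convention (Gram matrix of $\operatorname{nrd}$ restricted to $\mathcal O^T$) and prove the commutator identity above. Second, identify the lattice $\Lambda\subset B^0$ spanned by all $[x,y]$ with $x,y\in\mathcal O^T$. I expect $\Lambda=c\,(P\cap B^0)$ for an explicit constant $c$ (a power of $2$), where $P$ is the unique two-sided ideal of $\mathcal O$ of norm $p$; note $P=\{\alpha\in\mathcal O: p\mid\operatorname{nrd}(\alpha)\}$. I would prove this locally. At $\ell\neq p$ we have $\mathcal O_\ell\cong M_2(\ZZ_\ell)$, and an explicit basis computation of $\tau(M_2(\ZZ_\ell))$ and its commutators suffices. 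At $p$, $\mathcal O_p$ is the valuation ring of the ramified quaternion division algebra, $\mathcal O_p=\ZZ_{p^2}+\ZZ_{p^2}\Pi$ with $\Pi^2=p$, and one checks directly that commutators of trace-zero elements land in $\Pi\mathcal O_p$ and generate its trace-zero part. Equivalently, this step says that $\Lambda$, with the norm form, is a rescaling of the dual (adjoint) lattice of $\mathcal O^T$, and $\Lambda$ can be cross-checked against Gross's value for $\disc(\mathcal O^T)$.

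Third, the direction ($\Leftarrow$). Given a rank $2$ sublattice $\ZZ x+\ZZ y$ of determinant $4\ell p$, not necessarily saturated, set $w=[x,y]/c$. By the second step $w\in P\cap B^0\subseteq\mathcal O$, and by the first step $\operatorname{nrd}(w)=\ell p$. The Deuring correspondence then gives a trace-zero endomorphism of $E$ of degree $\ell p$.

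Fourth, the direction ($\Rightarrow$). Given trace-zero $w\in\mathcal O$ with $\operatorname{nrd}(w)=\ell p$, we have $w\in P\cap B^0$, so $cw\in\Lambda$. Let $M=w^\perp\cap\mathcal O^T$; this is a primitive rank $2$ sublattice. By the duality of the second step (orthogonal complements in a ternary lattice versus vectors of the dual), $M$ has determinant $4\ell'p$ where $\ell=k^2\ell'$ and $k$ is the largest integer with $w/k\in P\cap B^0$. Replacing $M$ by a sublattice of index $k$ multiplies the determinant by $k^2$, which gives determinant exactly $4\ell p$.

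\textbf{Main obstacle.} The crux is the second step: the exact identification of the commutator lattice of $\mathcal O^T$ with the correct multiple of $P\cap B^0$, including the powers of $2$ that come from $\tau$ and from the factor $\tfrac12$ in the cross product. An off-by-$2$ error at the prime $2$ would break the constant $4$ in $4\ell p$. I would first carry out the local computation at $2$ explicitly in $M_2(\ZZ_2)$, since that is where the normalization $\tau(x)=2x-\operatorname{trd}(x)$ interacts most delicately with the norm form. I would then verify the resulting global constant against the known discriminant of the Gross lattice.
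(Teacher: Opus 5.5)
Your proposal is correct and follows essentially the mechanism the paper relies on; the paper cites this theorem from Aubry--Vincent rather than reproving it. Your Step~1 is their identity $\det(\alpha_1,\alpha_2)=4\operatorname{nrd}([\alpha_1,\alpha_2]/4)$, and your Step~2 holds with $c=4$ and $P=[\mathcal O,\mathcal O]$, which is the isomorphism $\bigwedge^2\mathcal O^T\to[\mathcal O,\mathcal O]^0$, $\alpha_1\wedge\alpha_2\mapsto[\alpha_1,\alpha_2]/4$, quoted in the paper.

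Two small remarks. At $p=2$ the local check must use commutators of $\tau(\mathcal O_2)$, which span $4(\Pi\mathcal O_2)^0$. The commutators of $\mathcal O_2^0$ span only an index-$4$ sublattice of $(\Pi\mathcal O_2)^0$, so the literal wording ``commutators of trace-zero elements'' would give the wrong lattice there. Also, Step~4 can be shortened: every element of $\bigwedge^2\mathcal O^T$ is decomposable, so $4w=[x,y]$ for some $x,y\in\mathcal O^T$, and then $\det(\ZZ x+\ZZ y)=\operatorname{nrd}(4w)/4=4\ell p$ directly.
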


Although an endomorphism of degree $\ell p$ need not have trace zero in general, Aubry, Oyono, and Vincent observe that it is automatic when $\ell p<p^2/4$ \cite[discussion immediately preceding Proposition~3.1]{aubry_minimal_2026}. For any two supersingular elliptic curves, the least degree of an isogeny between them is $O(p^{2/3})$ \cite[Section~4]{elkies_existence_1987}. Consequently, for all sufficiently large $p$, \Cref{thm:av_main} applies to the inseparable endomorphism corresponding to a least-degree isogeny $E\to E^{(p)}$; handling the remaining primes separately yields the following characterization of $\delta_E$ \cite[Proposition~3.1]{aubry_minimal_2026}.

\begin{proposition}
    \label{prop:aov_delta_E}
    Let $p$ be prime and let $E/\overline{\mathbb F}_p$ be a supersingular elliptic curve with associated Gross lattice $\mathcal O^T$. Then
    \[
        \delta_E = \min_{\Lambda \subset \mathcal O^T} \frac{\det(\Lambda)}{4p},
    \]
    where the minimum runs over all rank $2$ sublattices $\Lambda$ of the Gross lattice $\mathcal O^T$.
\end{proposition}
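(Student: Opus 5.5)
The plan is to prove the equality as two inequalities, with \Cref{thm:av_main} as the bridge between isogenies and rank $2$ sublattices. First I would record the basic dictionary. An isogeny $\varphi\colon E\to E^{(p)}$ of degree $\ell$ gives $\alpha=\hat\varphi^{(?)}\circ$ (more precisely $\psi\circ F$ with $\psi=\hat\varphi$ composed appropriately) an inseparable endomorphism of degree $\ell p$. Conversely, every inseparable endomorphism of degree $\ell p$ factors as $\psi\circ F$ with $\deg\psi=\ell$. Thus
\[
\delta_E=\min\{\ell\ge1 : E \text{ has an inseparable endomorphism of degree } \ell p\}.
\]
I would also note that $p$ ramifies in $\mathcal O$, and that an endomorphism is inseparable exactly when it lies in the unique two-sided ideal $\mathfrak P$ of reduced norm $p$. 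Hence any endomorphism of degree divisible by $p$ is inseparable, and any element of degree $\ell p$ is automatically inseparable.

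For the inequality $\delta_E\le\min_\Lambda\det(\Lambda)/4p$, let $\Lambda\subset\mathcal O^T$ be a rank $2$ sublattice. I first need $4p\mid\det\Lambda$. To get this, I would compute that the discriminant of the Gross lattice forces every rank $2$ sublattice determinant to be divisible by $4p$; for $\Lambda$ spanned by $x,y$ with $x,y\in\mathcal O^T$, one has $\det\Lambda=\operatorname{nrd}(x)\operatorname{nrd}(y)-\tfrac14\operatorname{trd}(x\bar y)^2$, which equals $\operatorname{nrd}$ of the pure quaternion $\tfrac12(xy-yx)$ up to the appropriate factor. Setting $\ell=\det\Lambda/4p$, \Cref{thm:av_main} then yields a trace-zero endomorphism of degree $\ell p$. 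It is inseparable by the remark above, so $\delta_E\le\ell$. This direction is valid for every $p$ and needs no size hypothesis.

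For the reverse inequality, take $\varphi$ of degree $\delta_E$ and the inseparable $\alpha$ of degree $\delta_E p$. I want $\alpha$ to have trace zero, so that \Cref{thm:av_main} produces $\Lambda$ with $\det\Lambda=4\delta_E p$. The main obstacle is exactly this trace-zero claim. I would argue it as follows. Since $\alpha\in\mathfrak P$ and $\mathfrak P^2=p\mathcal O$, we get $\alpha^2\in p\mathcal O$, so $\operatorname{trd}(\alpha)\alpha-\operatorname{nrd}(\alpha)\in p\mathcal O$. This forces $p\mid\operatorname{trd}(\alpha)$ whenever $\alpha\notin p\mathcal O$; in the case $\alpha\in p\mathcal O$ we may divide by $p$, and minimality of $\delta_E$ handles it. Then $|\operatorname{trd}\alpha|^2\le4\operatorname{nrd}\alpha=4\delta_E p$ forces $\operatorname{trd}\alpha=0$ once $\delta_E<p/4$. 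By the Elkies $O(p^{2/3})$ bound, or more simply the bound $\delta_E\le\lfloor\sqrt[3]{p/2}\rfloor$, this holds for all $p$ beyond a small threshold. The finitely many small primes would then be checked directly by computing both sides for each of the finitely many supersingular curves, which is where I expect the proof to be least elegant.
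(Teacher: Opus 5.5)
Your argument follows the same route as the paper's sketch, which defers to \cite[Proposition~3.1]{aubry_minimal_2026}. Both inequalities go through \Cref{thm:av_main}. The trace-zero hypothesis comes from $p\mid\operatorname{trd}(\alpha)$ for $\alpha\in\mathfrak P$ together with $\operatorname{trd}(\alpha)^2\le4\operatorname{nrd}(\alpha)$. An a priori bound pushes $\delta_E$ below $p/4$, and small primes are checked separately. Your derivation of $p\mid\operatorname{trd}(\alpha)$ is fine, though the case $\alpha\in p\mathcal O$ needs no minimality argument: there $\operatorname{trd}(\alpha)\in p\ZZ$ trivially.

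One step is misjustified: the divisibility $4p\mid\det(\Lambda)$. Without it you cannot write $\det(\Lambda)=4\ell p$ and apply \Cref{thm:av_main} in the first inequality. The determinant $\det(G)=4p^2$ of the Gross lattice does not force this; the paper makes exactly this point after \eqref{eq:wedge-form}. What does force it is your commutator identity, made exact and combined with the structure of $\mathcal O$:
\begin{itemize}
\item For trace-zero $x,y$ one has $\det(\Lambda)=\operatorname{nrd}\bigl(\tfrac12(xy-yx)\bigr)$, with no extra factor.
\item Writing $x=2a-\operatorname{trd}(a)$ and $y=2b-\operatorname{trd}(b)$ with $a,b\in\mathcal O$ gives $\tfrac12(xy-yx)=2(ab-ba)$.
\item Since $\mathcal O/\mathfrak P\cong\mathbb F_{p^2}$ is commutative, $ab-ba\in\mathfrak P$.
\item Hence $\det(\Lambda)=4\operatorname{nrd}(ab-ba)\in4p\ZZ$.
\end{itemize}

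Be careful also with quoting $\delta_E\le\lfloor\sqrt[3]{p/2}\rfloor$. That is \cite[Theorem~1.1]{aubry_minimal_2026}, which is deduced from this very proposition, so citing it here is circular. You can, however, obtain it non-circularly from your first inequality alone. Apply Hermite's bound to the determinant form with Gram matrix $\operatorname{adj}(G)$, whose determinant is $16p^4$. This gives $\min_\Lambda\det(\Lambda)/(4p)\le(p/2)^{1/3}$, hence $4\delta_E<p$ for every $p\ge5$. This also sharpens your final step: the direct check shrinks to $p\in\{2,3\}$, rather than an unspecified range coming from the ineffective Elkies bound.
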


To connect this characterization with ternary quadratic forms, we interpret the normalized determinants in \Cref{prop:aov_delta_E} as values of a single quadratic form on $\bigwedge^2\mathcal O^T$.
Recall that a rank $2$ sublattice $\Lambda\subset\mathcal O^T$ with basis $\alpha_1,\alpha_2$ determines the vector $\alpha_1\wedge\alpha_2\in\bigwedge^2\mathcal O^T$, well-defined up to sign. Since $\mathcal O^T$ has rank $3$, so does $\bigwedge^2\mathcal O^T$. The wedge product also satisfies a universal property: every alternating bilinear map on $\mathcal O^T$ factors uniquely through a linear map from $\bigwedge^2\mathcal O^T$.

Let $[\mathcal O,\mathcal O]$ denote the two-sided ideal generated by commutators in the maximal order $\mathcal O$, and let $[\mathcal O,\mathcal O]^0$ denote its trace-zero sublattice. The normalized commutator pairing on $\mathcal O^T$ studied by Aubry and Vincent \cite{aubry_inseparable_2026} is alternating and bilinear with values in $[\mathcal O,\mathcal O]^0$:
\[
    (\alpha_1,\alpha_2)\longmapsto
    \frac{\alpha_1\alpha_2-\alpha_2\alpha_1}{4}.
\]
The universal property therefore induces a linear map
\[
    \bigwedge^2\mathcal O^T\longrightarrow[\mathcal O,\mathcal O]^0,
    \qquad \alpha_1\wedge\alpha_2\longmapsto\frac{[\alpha_1,\alpha_2]}{4}.
\]
By \cite[Corollary~3.2]{aubry_inseparable_2026}, this map is an isomorphism. Moreover, \cite[Proposition~2.2]{aubry_inseparable_2026} shows that the determinant and reduced norm are equivalent through this isomorphism up to a factor of $4$:
\[
\det(\alpha_1, \alpha_2) = 4\operatorname{nrd}\left (\frac{[\alpha_1, \alpha_2]}{4} \right ).
\]

The significance of this isomorphism is that $[\mathcal O,\mathcal O]$ is local at $p$: it is the unique two-sided prime ideal of any maximal order above the ramified prime $p$. Consequently, the reduced norm of every element of $[\mathcal O,\mathcal O]$ is divisible by $p$, and $\operatorname{nrd}/p$ defines an integral ternary quadratic form on $[\mathcal O,\mathcal O]^0$. Pulling this form back to $\bigwedge^2\mathcal O^T$ through the isomorphism above yields an integral ternary quadratic form $Q$ such that
\[
    Q(\alpha_1\wedge\alpha_2)
    =\frac{\det(\alpha_1,\alpha_2)}{4p}.
\]
Every nonzero vector in $\bigwedge^2\mathcal O^T$ arises from a rank $2$ sublattice of $\mathcal O^T$, so $Q$ takes precisely the normalized determinant values appearing in \Cref{prop:aov_delta_E}. Hence, $\delta_E=\min(Q)$.

To describe $Q$ explicitly, let $G$ be the Gram matrix of $\operatorname{nrd}$ on $\mathcal O^T$. With this notation, the induced determinant form on $\bigwedge^2\mathcal O^T$ has Gram matrix $\operatorname{adj}(G)$. Since $Q$ is this form scaled by $1/(4p)$, we obtain
\begin{equation}
    \label{eq:wedge-form}
    Q(\vec x) = \frac{1}{4p}\vec x^{\, T}\operatorname{adj}(G)\vec x.
\end{equation}
The integrality of $Q$ is striking: for an arbitrary Gram matrix $G$ satisfying $\det(G)=4p^2$, the right-hand side of \eqref{eq:wedge-form} need not define an integral form. However, in the case of the Gross lattice, integrality is forced by the identification of $Q$ with $\operatorname{nrd}/p$ on $[\mathcal O,\mathcal O]^0$, and its discriminant follows directly from equation~\eqref{eq:wedge-form}. Namely, since $\det(G)=4p^2$ by \cite[Proposition~4.1]{aubry_minimal_2026},
\[
    \disc(Q)
    =4\det\left(\frac{\operatorname{adj}(G)}{4p}\right)
    =\frac{4\det(G)^2}{(4p)^3}
    =p.
\]

Thus, interpreted in the language of ternary quadratic forms, the results of \cite{aubry_inseparable_2026,aubry_minimal_2026} give the forward direction of the following proposition.

\begin{proposition}
    \label{prop:curve_tqf}
    Let $p$ be prime and let $n$ be a positive integer. There exists a supersingular elliptic curve $E$ over $\overline{\mathbb F}_p$ with $\delta_E=n$ if and only if there exists a positive definite integral ternary quadratic form $Q$ such that
    \[
        \min(Q) = n \quad \text{ and } \quad \disc(Q) = p.
    \]
\end{proposition}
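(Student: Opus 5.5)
The forward direction is already established above: for every supersingular $E$ the form $Q$ of \eqref{eq:wedge-form} is positive definite, integral, has discriminant $p$, and satisfies $\min(Q)=\delta_E$. So only the converse remains. Given a positive definite integral ternary form $Q$ with $\disc(Q)=p$ and $\min(Q)=n$, we must produce a maximal order $\mathcal O\subset B_{p,\infty}$, hence by Deuring a supersingular $E$ with $\operatorname{End}(E)\cong\mathcal O$, whose associated form is integrally equivalent to $Q$. Then $\delta_E=\min(Q)=n$.

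The plan is to use the classical correspondence between ternary quadratic forms and quaternion orders via the even Clifford algebra \cite[Chapter~22]{voight_quaternion_2021}. That correspondence is a discriminant-preserving bijection between twisted-similarity classes of nondegenerate integral ternary forms and isomorphism classes of quaternion orders, with the order $\operatorname{Clf}^0(Q)$ attached to $Q$. First, since $\disc(Q)=p$ is squarefree, the order $\operatorname{Clf}^0(Q)$ has reduced discriminant $p$ and is therefore maximal. Because $Q$ is positive definite, the ambient algebra is definite. A definite quaternion algebra over $\mathbb Q$ with a maximal order of discriminant $p$ must be $B_{p,\infty}$. Let $\mathcal O\colonequals\operatorname{Clf}^0(Q)$.

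Second, we identify the inverse map explicitly. Under the bijection, $\mathcal O$ is sent to the form $\operatorname{nrd}$ restricted to the dual lattice $(\mathcal O^\sharp)^0$ of trace-zero elements, rescaled by the discriminant. For maximal $\mathcal O$ we have $\mathcal O^\sharp=[\mathcal O,\mathcal O]^{-1}=P^{-1}$, where $P$ is the prime above $p$. Since $P$ is two-sided and $P^2=p\mathcal O$, we get $pP^{-1}=P=[\mathcal O,\mathcal O]$. Tracking the normalization, the ternary form attached to $\mathcal O$ should therefore be $\operatorname{nrd}/p$ on $[\mathcal O,\mathcal O]^0$. This is exactly the form whose pullback along $\bigwedge^2\mathcal O^T\cong[\mathcal O,\mathcal O]^0$ was shown to be $Q_E$. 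Because the correspondence is a bijection, this form is similar to the original $Q$.

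Third, we pass from similarity to integral equivalence. Both forms are positive definite with the same discriminant $p$, so the similarity factor is $\pm1$. A sign change would destroy positive definiteness, so it is $1$, and $Q_E\sim Q$ integrally. Hence the minima agree.

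The main obstacle is the second step: matching the normalizations in Voight's correspondence, which uses the dual-lattice and discriminant scaling conventions, with the specific form $\operatorname{nrd}/p$ on $[\mathcal O,\mathcal O]^0$. I would first verify this on a single example, such as $p=2$ or $p=3$ with $Q_0$-like forms, and then prove it in general using the local description of $\mathcal O\otimes\ZZ_p$ as the valuation ring of the ramified division algebra, where $[\mathcal O,\mathcal O]_p=\pi\mathcal O_p$.
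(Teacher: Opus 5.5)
Your outline gives a correct proof once the normalization is pinned down, but it follows a different route from the paper. The paper never uses the bijectivity of the even Clifford correspondence or its inverse. It takes Voight's explicit reduced-norm formula \cite[(22.3.18)]{voight_quaternion_2021} for $\operatorname{Clf}^0(Q)$ and computes $G_{\mathcal O^T}=4\operatorname{adj}(G_Q)$ directly in the basis $\tau(i),\tau(j),\tau(k)$. Then $\operatorname{adj}(4\operatorname{adj}(G_Q))=16\det(G_Q)G_Q$ and $\det(G_Q)=p/4$ show that the normalized determinant form has Gram matrix exactly $G_Q$.

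You instead rely on the inverse of the Clifford map being $N\cdot\operatorname{nrd}$ on $(\mathcal O^\sharp)^0$. You identify $\mathcal O^\sharp=P^{-1}$ and $pP^{-1}=P=[\mathcal O,\mathcal O]$, then conclude by bijectivity together with positivity to rule out the sign $-1$. This is more conceptual, since it explains that $Q_E$ recovers $Q$ because it literally is the inverse of the even Clifford map. However, the normalization you flag as the main obstacle is exactly what the paper's explicit computation supplies, so your proof is only as complete as your citation of the inverse map.

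You can settle that normalization without the commutator ideal or $P^{-1}$ at all. For trace-zero $\alpha$ and $x\in\mathcal O$ we have $\operatorname{trd}(\alpha\tau(x))=2\operatorname{trd}(\alpha x)$. Hence $(\mathcal O^\sharp)^0$ is precisely the dual of $\mathcal O^T$ with respect to the bilinear form $\tfrac12\operatorname{trd}(\alpha\overline{\beta})$ of $\operatorname{nrd}$. In the dual basis, $p\operatorname{nrd}$ on $(\mathcal O^\sharp)^0$ therefore has Gram matrix $pG^{-1}$. Using $\det G=4p^2$, this equals $\operatorname{adj}(G)/(4p)$, the Gram matrix of $Q_E$.

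One small correction: squarefree reduced discriminant does not by itself make an order maximal. Hereditary orders of level $p$ in $M_2(\mathbb Q)$ are counterexamples. Maximality comes from definiteness, which forces $\operatorname{disc}(B)\neq 1$, hence $\operatorname{disc}(B)=p=\operatorname{discrd}(\mathcal O)$; this is how the paper argues, and you already have both ingredients.
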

The converse produces a curve $E$ from a form $Q$ through two classical correspondences. The even Clifford algebra construction associates to any positive definite integral ternary quadratic form $Q$ of discriminant $p$ a maximal order $\mathcal O\subset B_{p,\infty}$ \cite[Main Theorem~22.1.1]{voight_quaternion_2021}, and the Deuring correspondence realizes $\mathcal O$ as the endomorphism ring of a supersingular elliptic curve $E/\overline{\mathbb F}_p$. To conclude that $\delta_E=\min(Q)$, one must additionally verify that the normalized determinant form associated with the Gross lattice $\mathcal O^T$ is integrally equivalent to the original form $Q$. We give the key compatibility calculation below and defer a formal proof of both directions to Appendix~\ref{app:curve_tqf}.

For the maximal order $\mathcal O$ associated with $Q$ above, Voight's reduced-norm formula \cite[(22.3.18)]{voight_quaternion_2021} gives, in the bases arising from the Clifford algebra construction,
\[
    G_{\mathcal O^T}=4\operatorname{adj}(G_Q).
\]
The adjugate here arises from the Clifford algebra norm formula, whereas in \eqref{eq:wedge-form} it arose from the determinant form on $\bigwedge^2\mathcal O^T$. Nonetheless, taking adjugates of both sides, scaling by $1/(4p)$, and using $\det(G_Q)=\disc(Q)/4=p/4$ yields
\[
    \frac{1}{4p}\operatorname{adj}(G_{\mathcal O^T})
    =\frac{1}{4p}\operatorname{adj}\left(4\operatorname{adj}(G_Q)\right)
    =\frac{4\det(G_Q)}{p}G_Q
    =G_Q.
\]
Consequently, the normalized determinant form associated with $\mathcal O^T$ has Gram matrix $G_Q$ in the corresponding basis and is therefore integrally equivalent to $Q$.

\begin{remark}
    \label{rem:curve_tqf_effective}
    The reverse implication in \Cref{prop:curve_tqf} is constructive. Namely, starting from $Q$, the even Clifford algebra construction directly produces an explicit basis for the associated maximal order $\mathcal O\subset B_{p,\infty}$. Thus, realizing $Q$ by an explicit curve reduces to the constructive Deuring problem: computing a supersingular elliptic curve $E/\mathbb F_{p^2}$ whose endomorphism ring is isomorphic to $\mathcal O$. Once such a curve is computed, \Cref{prop:curve_tqf} ensures that $\delta_E=\min(Q)$. Under the Generalized Riemann hypothesis (GRH), the constructive Deuring problem can be solved in probabilistic polynomial time in $\log p$ \cite{cremona_deuring_2024}.
\end{remark}

\begin{remark}
    \label{remark:aov_counterexample}
    The minimum criterion in \Cref{prop:curve_tqf} gives a direct explanation for an exceptional example arising in \cite[Section~5.1]{aubry_minimal_2026}. In their search for wisde primes, Aubry, Oyono, and Vincent derive necessary determinant and coefficient constraints to enumerate candidate Gram matrices for the normalized degree lattice of a curve satisfying
    \[
        \delta_E=\left\lfloor\sqrt[3]{p/2}\right\rfloor.
    \]
    For $p=22273$, their sieve produces
    \[
        G=
        \begin{pmatrix}
            22 & 15/2 & 8\\
            15/2 & 22 & -9\\
            8 & -9 & 23
        \end{pmatrix}.
    \]
    Although $G$ satisfies $\det(G)=p/4$, they report that this candidate does not yield a supersingular elliptic curve with $\delta_E=22$. \Cref{prop:curve_tqf} exactly explains this: the associated integral ternary quadratic form is
    \[
        Q(X,Y,Z)=22X^2+22Y^2+23Z^2-18YZ+16XZ+15XY,
    \]
    so $\disc(Q)=p$. However, $Q$ is not Eisenstein-reduced, and a direct Eisenstein reduction shows that $\min(Q)=Q(1,-1,-1)=18$, not $22$. By \Cref{prop:curve_tqf}, $Q$ therefore corresponds to a supersingular elliptic curve with $\delta_E=18$.
\end{remark}

\subsection{Constructing Curves at Prescribed Distances}

We have now restated the problem of finding a supersingular curve $E/\overline{\mathbb F}_p$ with $\delta_E=n$ as the problem of finding a positive definite integral ternary quadratic form $Q$ with $\min(Q)=n$ and $\disc(Q)=p$. Our first result uses this criterion to show that every finite set of values can occur simultaneously as $\delta_E$ for a fixed characteristic. 

\begin{theorem}
    \label{thm:every-delta}
    For every finite nonempty set $S\subset\ZZ_{>0}$, there are infinitely many primes $p$ such that for every $m\in S$, there is a supersingular elliptic curve $E/\overline{\mathbb F}_p$ with $\delta_E=m$.
\end{theorem}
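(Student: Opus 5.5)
By \Cref{prop:curve_tqf}, it suffices to find infinitely many primes $p$ such that, for each $m\in S$, some positive definite integral ternary form $Q_m$ has $\min(Q_m)=m$ and $\disc(Q_m)=p$. The plan is to write down, for each $m$, a one-parameter family of reduced forms whose discriminants fill out an arithmetic progression. We then arrange these progressions to have pairwise coprime moduli, combine them with the Chinese remainder theorem, and apply Dirichlet's theorem.

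\emph{The family.} For integers $b\ge m$ and $c\ge b$, consider
\[
    Q_{m,b,c}=mX^2+bY^2+cZ^2+YZ+XY,
\]
that is, $a=m$, $r=t=1$ and $s=0$. Expanding $\disc(Q)=\tfrac12\det(A_Q)$ gives the general formula
\[
    \disc(Q)=4abc+rst-ar^2-bs^2-ct^2 .
\]
For our family this becomes
\[
    \disc(Q_{m,b,c})=c\,(4mb-1)-m .
\]

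\emph{Reducedness.} The coefficients satisfy $0<m\le b\le c$, $|t|,|s|\le m$ and $|r|\le b$. All cross coefficients are nonnegative, so $s+t-r=0\le a+b$. Hence the form is sign-normalized Eisenstein-reduced, apart possibly from the boundary conventions, which do not affect the minimum. Consequently it is positive definite, and its minimum is its first diagonal coefficient, so $\min(Q_{m,b,c})=m$. If one wants to avoid relying on the boundary conventions, I would verify directly from Minkowski reduction that $Q(x)\ge m$ for all nonzero $x$; this is standard for these inequalities. It follows that every integer of the form $c\,N-m$ with $N=4mb-1$ and $c\ge b$ is the discriminant of a form with minimum $m$. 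Note that $\gcd(N,m)=1$.

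\emph{Choosing coprime moduli.} List $S=\{m_1,\dots,m_k\}$ and choose $b_1,\dots,b_k$ inductively so that the moduli $N_j=4m_jb_j-1$ are pairwise coprime. Each $N_j$ is odd. Suppose $N_1,\dots,N_{j-1}$ are already fixed, and let $q$ be an odd prime dividing one of them.
\begin{itemize}
    \item If $q\mid m_j$, then $N_j\equiv-1\pmod q$, so $q\nmid N_j$ automatically.
    \item If $q\nmid m_j$, then $q\mid N_j$ only for the single residue class $b_j\equiv(4m_j)^{-1}\pmod q$, which we exclude.
\end{itemize}
By the Chinese remainder theorem there is an admissible residue class for $b_j$ modulo the product of these primes. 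We take $b_j\ge m_j$ in that class.

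\emph{Finding the primes.} Now impose the simultaneous congruences $p\equiv -m_j\pmod{N_j}$ for all $j$. Each residue $-m_j$ is coprime to $N_j$, so by the Chinese remainder theorem these combine into one invertible residue class modulo $\prod_j N_j$. Dirichlet's theorem then gives infinitely many primes $p$ in this class. For any such $p$, set $c_j=(p+m_j)/N_j\in\ZZ$. Once $p$ is large enough, $c_j\ge b_j$ for every $j$, so $\disc(Q_{m_j,b_j,c_j})=p$ and $\min(Q_{m_j,b_j,c_j})=m_j$. Applying \Cref{prop:curve_tqf} to each $j$ completes the proof.

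The only step I expect to require care is the reducedness claim that the minimum equals $a$. This needs the reduction inequalities to be checked precisely for the chosen cross coefficients, which is why I keep $r,t\in\{0,1\}$ and $c\ge b\ge m$ so that every inequality holds with room to spare.
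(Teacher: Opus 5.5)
Your proposal is correct and follows essentially the same route as the paper. Both use a family of forms whose discriminant is linear in the $Z^2$-coefficient with minimum fixed at $m$, then pairwise coprime moduli, the Chinese remainder theorem, and Dirichlet's theorem. The differences are cosmetic: your $YZ$ cross term makes the target residue $-m$ rather than $-b_m$, and you get coprimality of the moduli $4m_jb_j-1$ by an elementary sieve on $b_j$ rather than by taking them to be Dirichlet primes. You also read off the minimum from reduction rather than by completing the square; the only nontrivial Minkowski check is $Q(1,-1,1)=m+b+c-2\ge c$.
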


The idea is to construct, for each $m\in S$, a one-parameter family of forms $Q_{m,t}$ such that
\[
    \disc(Q_{m,t})=A_mt-b_m
    \quad\text{ and }\quad
    \min(Q_{m,t})=m
\]
whenever $t$ is sufficiently large. Let $M\colonequals\prod_{m\in S}A_m$. By making the $A_m$ pairwise coprime, the Chinese remainder theorem (CRT) produces a residue class $a\in\ZZ/M\ZZ$ satisfying $a\equiv-b_m\pmod{A_m}$ for every $m\in S$. Dirichlet's theorem on arithmetic progressions guarantees infinitely many primes in this residue class. For every sufficiently large such prime $p$, setting $t_m\colonequals(p+b_m)/A_m$ gives $\disc(Q_{m,t_m})=p$ and $\min(Q_{m,t_m})=m$ for every $m\in S$.

\begin{proof}
    For each $m\in S$, consider the family
    \[
        Q_{m,t}(X,Y,Z)=mX^2+b_mY^2+tZ^2+XZ+XY,
    \]
    where $b_m$ will be specified shortly. A direct calculation gives
    \[
        \disc(Q_{m,t})=(4mb_m-1)t-b_m.
    \]
    Thus, writing $A_m\colonequals4mb_m-1$, we have $\disc(Q_{m,t})=A_mt-b_m$.

    To apply the CRT, we choose the $b_m$ so that the integers $A_m=4mb_m-1$ are pairwise coprime. This can be done by Dirichlet's theorem: for each $m\in S$, select a distinct prime
    \[
        A_m\equiv -1\pmod{4m}
    \]
    and set $b_m=(A_m+1)/(4m)$. Then $b_m$ is an integer, and the $A_m$ are pairwise coprime. Since we may take $A_m$ arbitrarily large, we may also arrange that $A_m>4m^2$, so that $b_m>m$.

    Let $M\colonequals\prod_{m\in S}A_m$. The CRT gives a residue class $a\in\ZZ/M\ZZ$ such that
    $a\equiv -b_m\pmod{A_m}$ for every $m\in S$.
    Because $0<b_m<A_m$, we have $\gcd(a,M)=1$, so Dirichlet's theorem yields infinitely many primes $p\equiv a\pmod M$. Choose any sufficiently large such prime $p$ with $p>mA_m$ for every $m\in S$. For each $m\in S$, define
    \[
        t_m\colonequals\frac{p+b_m}{A_m},
        \qquad\text{so that}\qquad
        \disc(Q_{m,t_m})=A_mt_m-b_m=p.
    \]

    It remains to show that $\min(Q_{m,t_m})=m$, which will follow from $b_m>m$ and $p>mA_m$. Completing the square first in $X$ and then in $Y$ gives
    \[
        Q_{m,t_m}(X,Y,Z)
        =m\left(X+\frac{Y+Z}{2m}\right)^2
        +\frac{A_m}{4m}\left(Y-\frac{Z}{A_m}\right)^2
        +\frac{p}{A_m}Z^2.
    \]
    If $Z\ne0$, then the choice of $p$ gives 
    \[
    Q_{m,t_m}(X,Y,Z)\ge (p/A_m)Z^2>m.
    \] 
    If $Z=0$ and $Y\ne0$, then the choice of $b_m$ gives
    \[
    Q_{m,t_m}(X,Y,0)\ge \frac{A_m}{4m}Y^2=(b_m-1/(4m))Y^2>m.
    \]
    Finally, if $Y=Z=0$ and $X\ne0$, then 
    \[
    Q_{m,t_m}(X,0,0)=mX^2\ge m.
    \] 
    Since $Q_{m,t_m}(1,0,0)=m$, it follows that $Q_{m,t_m}$ is positive definite with $\min(Q_{m,t_m})=m$. Applying \Cref{prop:curve_tqf} for every $m\in S$ gives the desired curves.
\end{proof}

Taking $S=\{n\}$ in \Cref{thm:every-delta} shows that every positive integer $n$ occurs as $\delta_E$ in infinitely many prime characteristics. Taking $S=\{1,\ldots,k\}$ shows that, for every sufficiently large prime $p$ in a suitable arithmetic progression, there are supersingular curves in characteristic $p$ realizing every $\delta_E$-value from $1$ through $k$. These conclusions prove and strengthen \cite[Conjecture~6.3]{aubry_minimal_2026} and \cite[Conjecture~6.4]{aubry_minimal_2026}, respectively.

The size of the characteristic produced by the proof depends on both the lower bound used to determine the minimum and the CRT modulus. Consider first $S=\{1,\ldots,k\}$, and suppose optimistically that each auxiliary prime $A_m$ can be chosen with $A_m=\Theta(m^2)$. The CRT modulus then satisfies
\[
    M=\prod_{m=1}^kA_m=k^{\Theta(k)}.
\]
The usual heuristic for primes in arithmetic progressions predicts that sampling the resulting progression produces a prime of size $p=k^{\Theta(k)}$. Thus, the rapid growth in the characteristic arises in this proof from requiring one characteristic to realize many prescribed values simultaneously.

\begin{remark}
    \label{rem:param_fam_alg}
    By contrast, the cubic dependence in the singleton case $S=\{n\}$ can be made explicit without requiring $A_n$ to be prime. Choosing $b_n=n+1$ and writing
    \[
        D\colonequals A_n=4n^2+4n-1
    \]
    gives the one-parameter family of ternary quadratic forms
    \[
        Q_{n,t}(X,Y,Z)=nX^2+(n+1)Y^2+tZ^2+XZ+XY.
    \]
    In the proof of \Cref{thm:every-delta}, the $A_m$ were taken prime only to make the moduli pairwise coprime; this is unnecessary when $S=\{n\}$. Since
    \[
        \disc(Q_{n,t})=Dt-(n+1)
        \quad\text{ and }\quad
        \gcd(D,n+1)=1,
    \]
    Dirichlet's theorem gives infinitely many prime discriminants $p=Dt-(n+1)$ as $t$ varies. Since $b_n=n+1>n$, the minimum argument in the proof of \Cref{thm:every-delta} gives $\min(Q_{n,t})=n$ whenever $p>nD$.

    Taking $A_n=D$ avoids the optimistic assumption $A_n=\Theta(n^2)$ in the singleton case, since $D=4n^2+4n-1$ has the required size. This allows us to estimate the size of the prime candidates produced by the construction. To keep $p$ within a constant factor of the required threshold $p>nD$, we may take $t\in[n+2,2(n+1)]$ and search among the candidates
    \[
        p=Dt-(n+1)=\Theta(n^3).
    \]
    The size of $p$ is therefore cubic in $n$, which is optimal up to a constant factor since the Hermite bound forces $p\ge 2n^3$ whenever $\delta_E=n$.

    Dirichlet's theorem guarantees infinitely many primes for each fixed $n$, but it does not show that one occurs in the interval above as $n$ grows. Suppose heuristically that primes of size $p$ are equidistributed among the $\varphi(D)$ reduced residue classes modulo $D$. Since integers of size $p$ have prime density about $1/\log p$, each candidate $p=Dt-(n+1)$ has probability approximately $D/(\varphi(D)\log p)$ of being prime.
    This suggests that one should expect to test $O(\log p)=O(\log n)$ values of $t$ before finding a prime. Moreover, since the interval contains $\Theta(n)=\Theta(p^{1/3})$ candidates, treating their primality as independent predicts that the probability of finding a prime tends to $1$ as $n$ grows. We emphasize that this is only heuristic: we are not aware of a proof of the required estimate for this growing family of moduli, even under GRH.

    For the $256$-bit example in Appendix~\ref{app:param_ex}, our implementation \cite{swanson_computational_2026} finds such a prime quickly. Applying the constructive procedure described in \Cref{rem:curve_tqf_effective} then produces a supersingular elliptic curve $E/\mathbb F_{p^2}$ with the prescribed value $\delta_E=n=\Theta(p^{1/3})$; in this example, $\delta_E\approx 2^{84}$.
\end{remark}

The construction above prescribes $\delta_E$ for one curve in characteristic $p$, but it does not determine the maximum $\delta(p)$ over all curves in that characteristic. That is, it proves that $\delta(p)\ge n$, but another curve may have a larger value of $\delta_E$. Consequently, it does not resolve the conjectured surjectivity of $p\mapsto\delta(p)$ from \cite[Conjecture~6.5]{aubry_minimal_2026}.

\section{Explicit Convergence to the Perfect Form}
\label{sec:near-perfect-forms}

We now turn our attention to positive definite integral ternary quadratic forms for which the Hermite bound
\[
    \disc(Q) \ge 2\min(Q)^3
\]
is nearly an equality. By \Cref{prop:curve_tqf}, this characterization helps identify and compute the supersingular elliptic curves $E$ for which $\delta_E$ is near-maximal. Independently of that application, this section gives an explicit convergence result for near-perfect integral lattices in $\RR^3$.

When equality holds in the Hermite bound, so that $\min(Q)=n$ and $\disc(Q)=2n^3$, the form $Q$ is integrally equivalent to $nQ_0$ \cite[Theorem~III.C, pp.~33--34]{cassels_introduction_1959}.
It is also well known that $Q_0$ is, up to scaling and integral equivalence, the unique perfect ternary form \cite[Theorem~6.2.1, pp.~194--195]{martinet_perfect_2003}.
Accordingly, as $\disc(Q)/(2n^3)$ tends to $1$, one can choose equivalent representatives for which $Q/n$ tends to $Q_0$. Our purpose is to make this convergence explicit when $\disc(Q)<2(n+e)^3$.

Voronoi's theory provides natural coordinates for describing the perturbations of $nQ_0$ that we wish to study. Intuitively, for a representative $Q=nQ_0+h_\lambda$, the coordinates $\lambda_i=h_\lambda(\vec v_i)=Q(\vec v_i)-n$ measure the difference between $Q$ and $nQ_0$ along the six minimal-vector directions of $Q_0$.
The following lemma makes this coordinate description of the convergence to $Q_0$ concrete.

\begin{lemma}
    \label{lem:a3}
    Let $n, e \in \ZZ$ with $n\ge 25e^{2}$ and $e \ge 1$. If $Q$ is a positive definite integral ternary quadratic form with
    \[
        \min(Q) = n \quad \text{ and } \quad \disc(Q) < 2(n + e)^3,
    \]
    then $Q$ is integrally equivalent to $nQ_0+h_\lambda$ for some $\lambda\in\ZZ_{\ge0}^6$ satisfying $\lambda_{\min}=0$. Every coefficient of $h_\lambda$ is bounded in absolute value by $57e$. In particular, $h_\lambda$ belongs to a bounded, finite, enumerable set independent of $n$.
\end{lemma}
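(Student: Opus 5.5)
We would work with the sign-normalized Eisenstein-reduced representative
\[
    Q=aX^2+bY^2+cZ^2+rYZ+sXZ+tXY,
\]
so that $a=n=D_1\le b=D_2\le c=D_3$, $s,t\ge0$, $|s|,|t|\le a$ and $|r|\le b$. The first step bounds the diagonal. The right-hand inequality in \eqref{eq:hadamard-hermite} gives $n\,bc\le 2\det(G_Q)=\disc(Q)/2<(n+e)^3$. Since $b\ge n$, this yields $c<(n+e)^3/n^2=n+3e+3e^2/n+e^3/n^2$. The hypothesis $n\ge25e^2$ makes the last two terms smaller than $1$. Hence $\beta\colonequals b-n$ and $\gamma\colonequals c-n$ satisfy $0\le\beta\le\gamma\le 3e$.

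The second step controls the cross coefficients using the explicit discriminant
\[
    \disc(Q)=4abc+rst-ar^2-bs^2-ct^2,
\]
which we sandwich between $2n^3\le\disc(Q)<2(n+e)^3$. The lower bound is Hermite's inequality applied with $\min(Q)=n$. Substituting $a=n$, $b=n+\beta$, $c=n+\gamma$, the function $4n^3+rst-n(r^2+s^2+t^2)$ on the box $|r|,|s|,|t|\le n$, cut out by the Eisenstein inequality, must come within $O(en^2)$ of its maximum $2n^3$.

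We would split by the common sign in the original Eisenstein form. In the \emph{positive} case $r,s,t\ge0$, write $r=n-\rho$, $s=n-\sigma$, $t=n-\tau$ with $\rho,\sigma,\tau\ge0$ because each cross coefficient is at most $n$ or $b$. Expanding gives
\[
    \disc(Q)=2n^3+n^2\,\ell(\beta,\gamma,\rho,\sigma,\tau)+n\,q(\cdot)+(\text{cubic}).
\]
A direct computation shows $\ell=\rho+\sigma+\tau+(\text{nonnegative combination of }\beta,\gamma)$, i.e. the linear form is positive on the cone allowed by reduction. So $\disc(Q)<2(n+e)^3=2n^3+6en^2+O(e^2n)$ forces $\rho+\sigma+\tau\le 6e+O(e^2/n)$. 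The condition $n\ge25e^2$ is used to absorb the quadratic and cubic error terms, which is where the explicit constant such as $57e$ would come from.

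In the \emph{nonpositive} case the near-extremal configurations are different reduced shapes of $nA_3$, for instance $r$ or $s$ near $-n$ with the remaining coefficient near $0$, consistent with $a+b+r+s+t\ge0$. We would identify these finitely many limiting shapes by the same first-order analysis. For each, we would exhibit an explicit unimodular change of variables carrying that limiting form to $nQ_0$, and apply it to $Q$. This transfers the $O(e)$ deviation bounds to the new coefficients, at the cost of a bounded constant factor.

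In all cases we end with a representative $Q'=nQ_0+h$ all of whose coefficients differ from those of $nQ_0$ by $O(e)$. We would make this $\le57e$ by tracking constants. Writing $h=h_\lambda$ via \eqref{eq:hlam-expansion}, we have $\lambda_i=Q'(\vec v_i)-n$. This is nonnegative because $\vec v_i\ne0$ and $\min(Q')=n$. Moreover $\lambda_1=Q'(1,0,0)-n=0$ in the positive case. In the transformed cases some minimal vector of $Q$ maps to one of the $\vec v_i$, and if needed we use the $S_4$-equivariance \eqref{eq:evaluation-equivariance} to move it to any $\vec v_i$. In either case $\lambda_{\min}=0$. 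Finiteness and independence of $n$ are then immediate, since $h_\lambda$ has six integer coefficients bounded by $57e$.

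The main obstacle is the second step. One must rigorously rule out, or transform, every reduced configuration with small discriminant excess, especially the nonpositive-sign case. One must also control the nonlinear terms uniformly so that the single hypothesis $n\ge25e^2$ suffices. We would first attempt this by showing that the first-order form $\ell$ has coefficients at least $1$ on each deviation over every face of the reduction domain. We would then bound the quadratic part crudely by $O(e^2)$ times $n$.
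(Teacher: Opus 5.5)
\textbf{There is a genuine gap in your second step.}

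Your diagonal bound ($0\le\beta\le\gamma\le3e$) is correct, though slightly weaker than the paper's $a_0+b_0\le3e$. Your final extraction of $\lambda$ via $\lambda_i=Q'(\vec v_i)-n\ge0$ matches the paper. The problem is the second step, which is the actual content of the lemma: you must show that a small discriminant excess forces the cross coefficients to lie within $O(e)$ of $n\vec c$ for one of finitely many centers $\vec c$.

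A first-order expansion cannot establish this. The quadratic term $nq$ contains $-n(\rho^2+\sigma^2+\tau^2)$. That term is of order $n^3$ when the deviations are of order $n$, and reduction alone permits such deviations. Bounding $q$ by $O(e^2)$ already presupposes $\rho,\sigma,\tau=O(e)$, so that step is circular.

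In fact your positive-case assertion is false. Take $e=3$ and any $n\ge225$, and consider
\[
    Q=nX^2+(n+1)Y^2+(n+2)Z^2+YZ+(n-1)XZ+(n-1)XY .
\]
This form is Eisenstein-reduced with all cross coefficients positive and no boundary cases, so $\min(Q)=n$. Its discriminant is $\disc(Q)=2n^3+14n^2+9n-2<2(n+3)^3$. Yet $\rho+\sigma+\tau=n+1$, with $\beta=1$ and $\gamma=2$. Here $n^2\ell=n^3+10n^2$ is cancelled at order $n^3$ by $nq=-n^3+4n^2+10n$. So the shapes near $(0,n,n)$, $(n,0,n)$ and $(n,n,0)$ occur inside your positive case, and no local analysis at $(n,n,n)$ can see them.

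Two smaller slips: $2n^3$ is the \emph{minimum} of the discriminant over the reduction domain, not its maximum. Also, $|r|\le b$ only gives $\rho\ge-\beta$, not $\rho\ge0$.

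\textbf{What is missing} is a global, quantitative stability estimate for $R(u,v,w)=2-u^2-v^2-w^2+uvw$ on the normalized reduction domain. The paper first writes $\disc(Q)/n^3-2=R(\bar x,\bar y,\bar z)+(\text{nonnegative terms})$, which gives $R<6e/n+6e^2/n^2+2e^3/n^3$. It then proves $d_1((u,v,w),\mathcal A)\le4R(u,v,w)$ on all of $D$, in two parts:
\begin{itemize}
\item for $u\ge0$, a case analysis according to the nearest center;
\item for $u\le0$, concavity of $4R(-q,v,w)-2+v+w-q$ on a polytope, checked at its vertices.
\end{itemize}
A mean-value estimate then passes from $D_\epsilon$ to $D$.

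Your plan of checking that the first-order form $\ell$ is positive at each limiting shape yields only linear growth of $R$ near each zero. Combined with compactness, it would give some constant, but not the explicit ones in the statement. To get $n\,d_1<51e$, and hence the bound $57e$ under the single hypothesis $n\ge25e^2$, you must also bound $R$ from below uniformly away from $\mathcal A$. Once that is done, your chart changes and the rest of your argument go through as in the paper.
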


\begin{proof}
    Consider the sign-normalized Eisenstein-reduced representative of $Q$ and write
    \[
        Q=nX^2+(n+a_0)Y^2+(n+b_0)Z^2+xYZ+yXZ+zXY,
    \]
    so that $0\le a_0\le b_0$, $0 \le y,z \le n$, and $|x|\le n+a_0$.

    The Hermite inequality in \eqref{eq:hadamard-hermite} and the upper bound on $\disc(Q)$ give
    \[
        n(n+a_0)(n+b_0) \le \frac {\disc(Q)}{2} < (n + e)^3.
    \]
    Hence,
    \[
        0 > (a_0+b_0-3e)n^2 + (a_0b_0-3e^2)n-e^3 \ge (a_0+b_0-3e)n^2 - 3e^2 n-e^3.
    \]
    If $a_0+b_0>3e$, then integrality gives $a_0+b_0-3e\ge1$. The rightmost expression is therefore at least $n^2-3e^2n-e^3$, which is positive for $n\ge25e^2$. This contradiction proves that $a_0+b_0\le3e$ and $a_0\le3e/2$.

    It remains to show that the cross coefficients $(x,y,z)$ differ by only $O(e)$ from those of an integrally equivalent copy of $nQ_0$. The sign-normalized conditions reduce the comparison to the following six equivalent copies of $Q_0$:
    \[
        \mathcal A = \left\{
            (1,1,1),(0,1,1),(1,0,1),(1,1,0),(-1,0,1),(-1,1,0)
        \right\}.
    \]

    Consider the normalized cross coefficients $\bar x=x/n$, $\bar y=y/n$, and $\bar z=z/n$. A direct computation of $\disc(Q)$ verifies that
    \begin{equation}
        \label{eq:disc-with-R}
        \frac{\disc(Q)}{n^3}-2 = R(\bar x,\bar y,\bar z)
        + \frac{4a_0 + 4b_0 - b_0\bar z^2 - a_0\bar y^2}{n}
        + \frac{4a_0b_0}{n^2},
    \end{equation}
    with $R(u,v,w)\colonequals 2-u^2-v^2-w^2+uvw$. The definition here is natural because
    \[
        2 + R(u, v, w) = \disc( X^2 + Y^2 + Z^2 + uYZ + vXZ+ wXY).
    \]
    Because $a_0,b_0\ge0$ and $|\bar y|,|\bar z|\le1$, the term $R(\bar x,\bar y,\bar z)$ is the only potentially negative term in \eqref{eq:disc-with-R}. The upper bound on $\disc(Q)$ gives
    \begin{align}
        \label{eq:a3-discriminant-bound}
        R(\bar x,\bar y,\bar z) \le \frac {\disc(Q)}{n^3}-2 < 2 \frac{(n + e)^3}{n^3} - 2 = \frac{6e}{n}+\frac{6e^2}{n^2}+\frac{2e^3}{n^3}.
    \end{align}
    The remaining argument compares $R$ with the $L^1$-distance $d_1((\bar x,\bar y,\bar z),\mathcal A)$ to the nearest point of $\mathcal A$ and shows that this distance is $O(e/n)$. We provide an overview of the proof here and defer two elementary calculus arguments to Appendix~\ref{app:a3-claims}.

    Letting $\epsilon\colonequals a_0/n$, the sign-normalized Eisenstein inequalities give $\bar y,\bar z \ge 0$ and $|\bar x| \le 1+ \epsilon$ with $\bar y+\bar z-\bar x\le2 + \epsilon$. We let $D_\epsilon$ denote the corresponding domain in $\RR^3$:
    \[
        D_\epsilon\colonequals \{(u,v,w): |u| \le 1+\epsilon, \,
        0 \le v, w \le 1, \, v+w-u \le 2 + \epsilon\}.
    \]
    When $\epsilon$ tends to $0$, this domain becomes the compact convex polytope
    \[
        D\colonequals \{(u,v,w):-1\le u\le1, \; 0\le v,w\le1, v+w-u\le2\}.
    \]

    \begin{figure}[t]
        \centering
        \includegraphics[width=\linewidth]{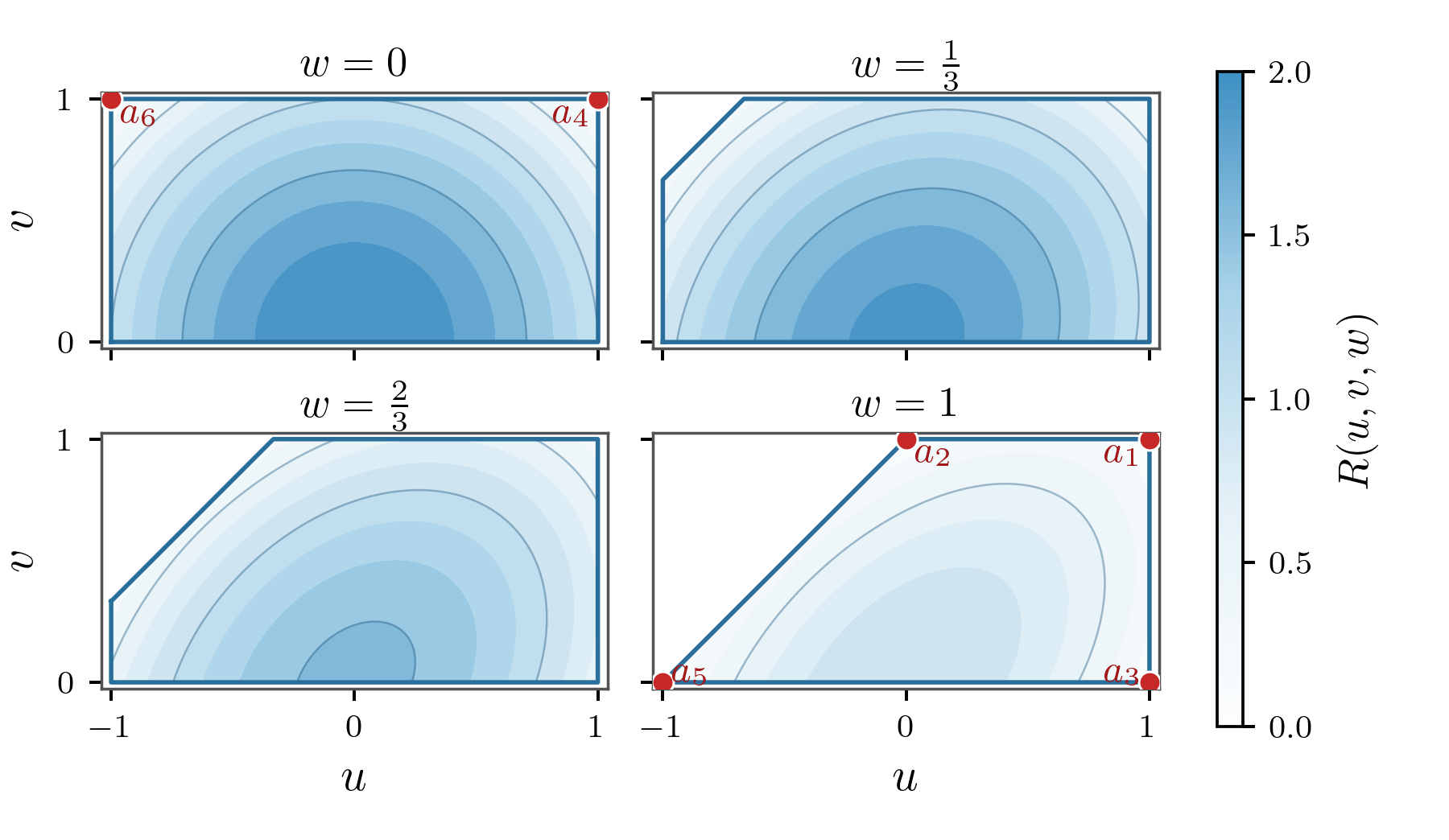}
        \caption{Fixed-$w$ cross-sections of $D$. The thick blue lines mark the boundary of each slice, while the shading and thin contours indicate the value of $R$. The red points $a_1, \dots, a_6$ are the six elements of $\mathcal A$, at which $R=0$.}
        \label{fig:domainofR}
    \end{figure}

    \begin{claim}
        \label{clm:a3-domain-comparison}
        Suppose $0\le\epsilon\le3/50$. For every $(u,v,w)\in D_\epsilon$, there is some $u_D\in\RR$ such that $(u_D,v,w)\in D$ with $|u-u_D|\le\epsilon$ and
        \[
            |R(u_D,v,w)-R(u,v,w)|\le4\epsilon.
        \]
    \end{claim}
    Since $a_0\le 3e/2$ and $n\ge 25e^2$, we have $\epsilon\le 3/50$. Claim~\ref{clm:a3-domain-comparison} therefore gives a point $(x_D,\bar y,\bar z)\in D$ satisfying both $|\bar x-x_D|\le \epsilon$ and $|R(x_D,\bar y,\bar z)-R(\bar x,\bar y,\bar z)|\le 4\epsilon$.

    \begin{claim}
        \label{clm:a3-distance}
        If $(u,v,w)\in D$, then
        \begin{equation}
            \label{eq:a3-distance-claim}
            d_1((u,v,w),\mathcal A)\le4R(u,v,w).
        \end{equation}
    \end{claim}
    Applying both claims along with the triangle inequality gives
    \begin{align*}
        d_1((\bar x,\bar y,\bar z),\mathcal A)
        &\le\epsilon
        +4R(x_D,\bar y,\bar z)\\
        &\le\epsilon
        +4\left(R(\bar x,\bar y,\bar z)+4\epsilon\right).
    \end{align*}
    Substituting back $\epsilon = a_0 / n$ and using \eqref{eq:a3-discriminant-bound} along with $a_0\le3e/2$ and $n\ge25e^2$ yields
    \begin{equation}
        \label{eq:a3-center-distance}
        n\,d_1((\bar x,\bar y,\bar z),\mathcal A)
        <\frac{99}{2}e+\frac{24e^2}{n}+\frac{8e^3}{n^2}
        <51e.
    \end{equation}
    Consequently, there are $\vec c=(c_1,c_2,c_3)\in\mathcal A$ and $\vec k=(k_1,k_2,k_3)\in\ZZ^3$ such that
    \begin{equation}
        \label{eq:a3-pre-chart-bounds}
        (x,y,z)=n\vec c+\vec k,
        \qquad a_0+b_0\le3e,
        \qquad |k_1|+|k_2|+|k_3|<51e.
    \end{equation} 
    From here, we apply a change of variables to move $\vec c\in\mathcal A$ to $(1,1,1)$ in the $Q_0$-chart.
    \begin{claim}
        \label{clm:a3-chart-change}
        The form $Q$ is integrally equivalent to $nQ_0+h$, where
        \[
            h\colonequals aY^2+bZ^2-rYZ-sXZ-tXY
        \]
        for some integers satisfying $0\le a\le b<57e$ and $|r|,|s|,|t|<57e$.
    \end{claim}

    Finally, evaluating $h$ on the six minimal-vector pairs of $Q_0$ recovers the vector $\lambda$ satisfying $h_\lambda=h$. Since $Q_0(\vec v_i)=1$ and $\min(Q)=n$, we have
    \[
        \lambda_i=h(\vec v_i)=Q(\vec v_i)-n\ge0.
    \]
    With the six representatives of $S(Q_0)$ chosen in \eqref{eq:fixedvi}, this gives
    \[
        \lambda=(0,a,b,a+t,b+s,a+b+r)\in\ZZ_{\ge0}^6.
    \]
    In particular, the first coordinate is zero, so $\lambda_{\min}=0$, as desired.
\end{proof}

To refine the lemma, we examine how each perturbation changes the discriminant. By \eqref{eq:hlam-expansion}, the coefficients of $h_\lambda$ are linear in $\lambda$, so $\disc(nQ_0+h_\lambda)$ is homogeneous of degree $3$ in $n,\lambda_1,\ldots,\lambda_6$. Consequently, there are homogeneous polynomials $C_2,C_1,C_0\in\ZZ[\lambda_1,\ldots,\lambda_6]$ of degrees $1$, $2$, and $3$, respectively, such that
\[
    \disc(nQ_0+h_\lambda)=2n^3+C_2(\lambda)n^2+C_1(\lambda)n+C_0(\lambda).
\]
For large $n$, the term $C_2(\lambda)n^2$ is the dominant contribution of $h_\lambda$ to the discriminant. To identify $C_2(\lambda)$, we note $\det(A_{Q_0})=4$, and compute
\begin{align*}
    \disc(nQ_0+h_\lambda)
    &=\frac12\det\left(nA_{Q_0}+A_{h_\lambda}\right)\\
    &=\frac{n^3}{2}\det(A_{Q_0})
    \det\left(I+\frac1nA_{Q_0}^{-1}A_{h_\lambda}\right)\\
    &=2n^3\det\left(I+\frac1nA_{Q_0}^{-1}A_{h_\lambda}\right).
\end{align*}
Using $\det(I+tM)=1+t\operatorname{tr}(M)+O(t^2)$ and the coefficient bounds from \Cref{lem:a3}, we obtain
\[
    \disc(nQ_0+h_\lambda)
    =2n^3+2\operatorname{tr}(A_{Q_0}^{-1}A_{h_\lambda})n^2
    +O(e^2n+e^3).
\]

A direct calculation shows that $2\operatorname{tr}(A_{Q_0}^{-1}A_{h_\lambda})=\lambda_1+\cdots+\lambda_6=\wt(\lambda)$. Therefore, $C_2(\lambda)=\wt(\lambda)$ and we may write
\begin{equation}
    \label{eq:lambda-discriminant-expansion}
    \disc(nQ_0+h_\lambda) = 2n^3+\wt(\lambda)n^2+C_1(\lambda)n+C_0(\lambda).
\end{equation}
This expansion is central to our classification theorem below: for sufficiently large $n$, the weight $\wt(\lambda)$ determines an interval of length comparable to $n^2$ containing $\disc(nQ_0+h_\lambda)$.

\begin{lemma}
    \label{lem:weight-trace}
    Let $\lambda\in\ZZ_{\ge0}^6$ and let $n,d\in\ZZ_{\ge0}$ be such that $Q=nQ_0+h_\lambda$ is positive definite. If $\disc(nQ_0+h_\lambda)>2(n+d)^3$, then $\wt(\lambda)>6d$.
\end{lemma}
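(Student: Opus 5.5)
The plan is to prove the contrapositive by an AM--GM argument on eigenvalues: if $\wt(\lambda)\le 6d$, then $\disc(nQ_0+h_\lambda)\le 2(n+d)^3$. The claimed bound is in fact the sharp form of the expansion \eqref{eq:lambda-discriminant-expansion}. Its linear term $\wt(\lambda)n^2$ agrees with that of $2(n+\wt(\lambda)/6)^3$, and the higher-order terms should be controlled by concavity of $\det^{1/3}$ on positive definite matrices.

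First, as in the computation preceding \eqref{eq:lambda-discriminant-expansion}, I will write
\[
    \disc(nQ_0+h_\lambda)=\tfrac12\det(nA_{Q_0}+A_{h_\lambda})=2\det\left(nI+M\right),
    \qquad M\colonequals A_{Q_0}^{-1}A_{h_\lambda},
\]
using $\det(A_{Q_0})=4$. Since $A_{Q_0}$ is positive definite, $M$ is similar to the symmetric matrix $A_{Q_0}^{-1/2}A_{h_\lambda}A_{Q_0}^{-1/2}$. Hence $M$ has real eigenvalues $\mu_1,\mu_2,\mu_3$.

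Second, the matrix $nI+M$ is similar to $A_{Q_0}^{-1/2}(nA_{Q_0}+A_{h_\lambda})A_{Q_0}^{-1/2}$. This matrix is positive definite because $Q=nQ_0+h_\lambda$ is positive definite by hypothesis, so each $n+\mu_i>0$. The case $n=0$ needs no separate treatment.

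Third, I will apply AM--GM to these three positive numbers:
\[
    \det(nI+M)=\prod_{i=1}^3(n+\mu_i)\le\left(n+\tfrac{\operatorname{tr}M}{3}\right)^3.
\]
By the direct calculation quoted before \eqref{eq:lambda-discriminant-expansion}, $2\operatorname{tr}(M)=\wt(\lambda)$, so $\operatorname{tr}(M)/3=\wt(\lambda)/6$. This gives
\[
    \disc(nQ_0+h_\lambda)\le 2\left(n+\tfrac{\wt(\lambda)}{6}\right)^3.
\]
Combining this with the hypothesis $\disc>2(n+d)^3$ yields $(n+d)^3<(n+\wt(\lambda)/6)^3$. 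Since the cube is strictly increasing on the reals, it follows that $\wt(\lambda)>6d$.

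The only step requiring care is the trace identity $2\operatorname{tr}(A_{Q_0}^{-1}A_{h_\lambda})=\wt(\lambda)$, which the paper already asserts. If I wanted to verify it, I would use linearity in $\lambda$ and check it on a single basis form $\eta_i$. By the $S_4$-equivariance \eqref{eq:evaluation-equivariance}, the trace $\operatorname{tr}(A_{Q_0}^{-1}A_{h_\lambda})$ is invariant under permuting the coordinates of $\lambda$, so all six basis forms give the same value and one check suffices. Note that the nonnegativity of $\lambda$ is not actually used in this argument.
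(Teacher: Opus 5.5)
Your proof is correct and is essentially the paper's argument: the paper applies the determinant--trace (AM--GM) inequality to $A_{Q_0}^{-1}A_Q=nI+M$, using $\det(A_{Q_0}^{-1}A_Q)=\disc(Q)/2$ and $\tfrac13\operatorname{tr}(A_{Q_0}^{-1}A_Q)=n+\wt(\lambda)/6$, exactly as you do. The paper additionally records the equality case ($A_Q$ a positive multiple of $A_{Q_0}$). That case is not needed for this lemma, but it is used later in the proof of \Cref{thm:wt-param}.
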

\begin{proof}
    The determinant-trace inequality \cite[equation (4), p. 73]{aigner_spectral_2018} gives
    \begin{equation}
        \label{eq:determinant-trace-bound}
        \sqrt[3]{\frac{\disc(Q)}{2}}
        =\sqrt[3]{\det(A_{Q_0}^{-1}A_Q)}
        \le \frac13\operatorname{tr}(A_{Q_0}^{-1}A_Q)
        =n+\frac{\wt(\lambda)}{6},
    \end{equation}
    where the first equality follows from $\det(A_{Q_0})=4$ and
    $\det(A_Q)=2\disc(Q)$, and the last from
    $2\operatorname{tr}(A_{Q_0}^{-1}A_{h_\lambda})=\wt(\lambda)$.
    Moreover, $A_{Q_0}^{-1}A_Q$ is similar to the positive definite symmetric matrix $A_{Q_0}^{-1/2}A_QA_{Q_0}^{-1/2}$, so equality holds in \eqref{eq:determinant-trace-bound} if and only if $A_Q=cA_{Q_0}$ for some $c>0$.
    The hypothesis now yields
    \[
        n+d<\sqrt[3]{\frac{\disc(Q)}2}
        \le n+\frac{\wt(\lambda)}6,
    \]
    so $\wt(\lambda)>6d$ as desired.
\end{proof}

Together, \Cref{lem:a3,lem:weight-trace} and \eqref{eq:lambda-discriminant-expansion} suggest the following refinement. For sufficiently large $n$, every form satisfying
\[
    \min(Q)=n
    \qquad\text{and}\qquad
    2(n+d)^3<\disc(Q)<2(n+d+1)^3
\]
is integrally equivalent to some $nQ_0+h_\lambda$ with $\lambda\in\ZZ_{\ge0}^6$, $\lambda_{\min}=0$, and $\wt(\lambda)\in\{6d+1,\ldots,6d+6\}$.

\Cref{lem:weight-trace} gives the lower bound on $\wt(\lambda)$, while \eqref{eq:lambda-discriminant-expansion} suggests the upper bound. We next show that tuples in this weight range produce forms of minimum $n$. Under a slightly stronger hypothesis, the following lemma also identifies $S(Q_0)$ as the six shortest vector pairs, allowing us to control integral equivalences between these forms.

\begin{lemma}
    \label{lem:wtmin}
    Let $n\in\ZZ_{\ge1}$ and let $\lambda\in\ZZ_{\ge0}^6$ with $\lambda_{\min}=0$. If $\wt(\lambda)\le n$, then $nQ_0+h_\lambda$ is positive definite and $\min(nQ_0+h_\lambda)=n$. Moreover, if $2\wt(\lambda)<n$, the six vector pairs on which $nQ_0+h_\lambda$ takes its smallest values are precisely those in $S(Q_0)$.
\end{lemma}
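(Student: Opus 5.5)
The plan is to bound $h_\lambda$ from below by a fixed multiple of $Q_0$, compare $Q\colonequals nQ_0+h_\lambda$ with a scaled copy of $Q_0$, and then use that $Q_0$ takes only integer values, with value $1$ exactly on $S(Q_0)$.

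\emph{Key inequality.} Since $h_\lambda=\sum_i\lambda_i\eta_i$ with every $\lambda_i\ge0$, it suffices to show
\[
    \eta_i(\vec x)\ge-\tfrac12 Q_0(\vec x)\qquad\text{for all }\vec x\in\RR^3 \text{ and all } i,
\]
that is, that each form $Q_0+2\eta_i$ is positive semidefinite. By the $S_4$-equivariance \eqref{eq:evaluation-equivariance}, $\operatorname{Aut}(Q_0)$ acts transitively on the $\eta_i$ and preserves $Q_0$. So it suffices to check a single index. From \eqref{eq:hlam-expansion}, $\eta_4=-XY$, so
\[
    Q_0+2\eta_4=X^2+Y^2+Z^2+YZ+XZ-XY.
\]
Its Gram matrix $\begin{pmatrix}1&-1/2&1/2\\-1/2&1&1/2\\1/2&1/2&1\end{pmatrix}$ has positive leading minors $1$ and $3/4$, and a direct expansion gives determinant $3/4-3/8-3/8=0$. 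Hence it is positive semidefinite. Summing over $i$ with weights $\lambda_i\ge0$ gives
\[
    Q(\vec x)\ge\Bigl(n-\tfrac{\wt(\lambda)}2\Bigr)Q_0(\vec x).
\]

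\emph{First claim.} Suppose $\wt(\lambda)\le n$. Then $n-\wt(\lambda)/2\ge n/2>0$, so $Q$ is positive definite. Next take a nonzero integral $\vec x$.
\begin{itemize}
    \item If $\vec x$ is one of the six pairs in $S(Q_0)$, then $Q(\vec v_i)=n+\lambda_i\ge n$.
    \item Otherwise $Q_0(\vec x)\ge2$, because $Q_0$ is integral and takes the value $1$ only on $S(Q_0)$. Then $Q(\vec x)\ge 2n-\wt(\lambda)\ge n$.
\end{itemize}
Since $\lambda_{\min}=0$, some $\vec v_i$ has $Q(\vec v_i)=n$, and therefore $\min(Q)=n$.

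\emph{Second claim.} Suppose $2\wt(\lambda)<n$. Every vector outside $S(Q_0)$ satisfies $Q(\vec x)\ge 2n-\wt(\lambda)>n+\wt(\lambda)$. On the other hand, every $\vec v_i$ satisfies $Q(\vec v_i)=n+\lambda_i\le n+\wt(\lambda)$. So the six values of $Q$ on $S(Q_0)$ are strictly smaller than all other values, which means the six shortest vector pairs are exactly $S(Q_0)$.

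\emph{Main obstacle.} The only step that is not routine is the semidefiniteness of $Q_0+2\eta_i$. This needs the explicit dual-basis forms $\eta_i$ read off from \eqref{eq:hlam-expansion}, together with the equivariance reduction to a single $i$. If that reduction seems delicate, I would instead verify all six forms directly from \eqref{eq:hlam-expansion}. Each is a $3\times3$ determinant check of the same kind as the one above.
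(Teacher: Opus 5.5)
Your proof is correct. It follows the same outline as the paper's: expand $h_\lambda=\sum_i\lambda_i\eta_i$, reduce to $\eta_4=-XY$ by $S_4$-transitivity, and split on whether $Q_0(\vec x)=1$ or $Q_0(\vec x)\ge2$. The key lemma, however, is different.

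The paper shows that $Q_0+\eta_4=Q_0(X,-Y,Y+Z)$ is \emph{integrally equivalent} to $Q_0$. Hence each $Q_0+\eta_i$ takes values at least $1$ on $\ZZ^3\setminus\{0\}$. The decomposition $nQ_0+h_\lambda=(n-\wt(\lambda))Q_0+\sum_i\lambda_i(Q_0+\eta_i)$ then gives the bound $n$ uniformly for every nonzero integral vector, so vectors in $S(Q_0)$ need no separate treatment. It gives the bound $2(n-\wt(\lambda))+\wt(\lambda)$ when $Q_0(\vec u)\ge2$.

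You instead prove the real inequality $\eta_i\ge-\tfrac12 Q_0$, i.e.\ that $Q_0+2\eta_4$ is positive semidefinite. Your minor computation is valid because the first two pivots $1$ and $3/4$ are strictly positive. The identity $Q_0+2\eta_4=\tfrac12\bigl[(X-Y)^2+(X+Z)^2+(Y+Z)^2\bigr]$ makes it immediate and shows the inequality is sharp.

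This buys you $Q\ge(n-\wt(\lambda)/2)Q_0$ on all of $\RR^3$. You get positive definiteness directly over $\RR$, even when $\wt(\lambda)<2n$. You also get a stronger bound $k(n-\wt(\lambda)/2)$ versus $k(n-\wt(\lambda))+\wt(\lambda)$ on vectors with $Q_0(\vec x)=k>2$. The cost is that the real bound is too weak on $S(Q_0)$ itself, so you must evaluate $Q(\vec v_i)=n+\lambda_i$ there by hand. At $Q_0(\vec x)=2$ both methods give exactly $2n-\wt(\lambda)$, so the hypotheses $\wt(\lambda)\le n$ and $2\wt(\lambda)<n$ come out identically.
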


\begin{proof}
    Recall that the representatives from \eqref{eq:fixedvi} give rise to a basis $\eta_1, \dots, \eta_6$ of integral ternary quadratic forms such that $h_\lambda = \sum \lambda_i \eta_i$.

    We first show that each $Q_0+\eta_i$ is integrally equivalent to $Q_0$ and therefore has minimum $1$. The $S_4$-action described in \eqref{eq:evaluation-equivariance} is transitive on the 6 coordinates $\lambda_i$ and therefore on $\eta_1,\ldots,\eta_6$. Thus, all the forms $Q_0+\eta_i$ are integrally equivalent, and it suffices to consider $Q_0+\eta_4$, with $\eta_4=-XY$. The unimodular substitution $(X,Y,Z)\mapsto(X,-Y,Y+Z)$ gives
    \begin{align*}
        (Q_0+\eta_4)(X,Y,Z)
        &=X^2+Y^2+Z^2+YZ+XZ\\
        &=Q_0(X,-Y,Y+Z).
    \end{align*}
    Hence, $Q_0$ is integrally equivalent to $Q_0+\eta_i$ for every $i$, so $\min(Q_0)=\min(Q_0+\eta_i)=1$. We may now express $nQ_0+h_\lambda$ as a nonnegative combination of these forms. For every $\vec u\ne0$, this gives
    \begin{align*}
        (nQ_0+h_\lambda)(\vec u)
        &=\bigl(n-\wt(\lambda)\bigr)Q_0(\vec u)
        +\sum_i\lambda_i(Q_0+\eta_i)(\vec u)\\
        &\ge\bigl(n-\wt(\lambda)\bigr)
        +\sum_i \lambda_i \ge n.
    \end{align*}
    Thus, $nQ_0+h_\lambda$ is positive definite and has minimum at least $n$. Choosing $j$ with $\lambda_j=0$ gives $(nQ_0+h_\lambda)(\vec v_j)=n$, so its minimum is exactly $n$.

    If additionally we have $2\wt(\lambda)<n$ and $Q_0(\vec u)\ge2$, the same decomposition gives
    \[
        (nQ_0+h_\lambda)(\vec u) \ge 2\bigl(n-\wt(\lambda)\bigr)+\wt(\lambda)
        =2n-\wt(\lambda)>n+\wt(\lambda).
    \]
    For each representative $\vec v_i$ chosen in \eqref{eq:fixedvi}, however,
    \[
        (nQ_0+h_\lambda)(\vec v_i)=n+\lambda_i\le n+\wt(\lambda).
    \]
    Every $\vec u$ representing a vector pair outside $S(Q_0)$ satisfies $Q_0(\vec u)\ge2$, so these inequalities prove the additional conclusion.
\end{proof}

The preceding lemma shows that tuples in the relevant weight range produce forms of minimum $n$. For sufficiently large $n$, it also shows that the six vector pairs defining the $\lambda$-coordinates can be recovered from the form itself. The following theorem completes the correspondence between the weight of $\lambda$ and the discriminant of $nQ_0+h_\lambda$. It also shows that integral equivalence of the associated forms corresponds exactly to the $S_4$-action described in \eqref{eq:evaluation-equivariance}.

\begin{theorem}
    \label{thm:wt-param}
    Let $n, d\in\ZZ$ with $d\ge0$ and $n\ge3000(d+1)^2$, and define
    \[
        \Lambda_d\colonequals
        \left\{
            \lambda\in\ZZ_{\ge0}^6:
            \lambda_{\min} = 0, \quad
            6d<\wt(\lambda) \le 6(d+1)
        \right\}.
    \]
    The map $\lambda\longmapsto nQ_0+h_\lambda$ is a bijection between the $S_4$-orbits of $\Lambda_d$ and equivalence classes of positive definite integral ternary quadratic forms $Q$ satisfying
    \[
        \min(Q)=n
        \quad\text{ and }\quad
        2(n+d)^3<\disc(Q)<2(n+d+1)^3.
    \]
\end{theorem}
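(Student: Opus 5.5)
The plan is to prove four things: well-definedness, that the image lands in the target set, surjectivity, and injectivity on orbits.

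\emph{Well-definedness.} By \eqref{eq:evaluation-equivariance}, $h_{\pi\cdot\lambda}=h_\lambda\circ g_\pi^{-1}$. Since $g_\pi\in\operatorname{Aut}(Q_0)$, we get $nQ_0+h_{\pi\cdot\lambda}=(nQ_0+h_\lambda)\circ g_\pi^{-1}$. The $S_4$-action permutes coordinates, so it preserves $\Lambda_d$, and the map descends to orbits.

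\emph{Image lies in the target set.} Take $\lambda\in\Lambda_d$. Then $\wt(\lambda)\le 6(d+1)<n/2$, so \Cref{lem:wtmin} shows that $Q=nQ_0+h_\lambda$ is positive definite with $\min(Q)=n$. For the discriminant, use \eqref{eq:lambda-discriminant-expansion}. Since each coordinate is at most $6(d+1)$, the coefficients of $h_\lambda$ are $O(d+1)$, so $|C_1(\lambda)|\le K_1(d+1)^2$ and $|C_0(\lambda)|\le K_0(d+1)^3$ for explicit constants $K_1,K_0$. For the lower bound, $\wt(\lambda)\ge 6d+1$ gives
\[
\disc(Q)-2(n+d)^3\ge n^2-\bigl(6d^2+K_1(d+1)^2\bigr)n-O\bigl((d+1)^3\bigr)-2d^3,
\]
which is positive once $n\ge 3000(d+1)^2$. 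For the upper bound,
\[
2(n+d+1)^3-\disc(Q)\ge \bigl(6(d+1)-\wt(\lambda)\bigr)n^2+6(d+1)^2n-|C_1|n-|C_0|.
\]
When $\wt(\lambda)<6(d+1)$ the $n^2$ term dominates. The equality case $\wt(\lambda)=6(d+1)$ is more delicate. Here I would use the strict determinant–trace inequality from the proof of \Cref{lem:weight-trace}: equality fails because $\lambda_{\min}=0$ while $\wt(\lambda)>0$, so $A_Q$ is not a multiple of $A_{Q_0}$. This gives $\disc(Q)<2(n+\wt(\lambda)/6)^3=2(n+d+1)^3$ directly, and in fact this argument handles every case of the upper bound at once. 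So only the lower bound needs the expansion estimates.

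\emph{Surjectivity.} Let $Q$ be in the target class. Apply \Cref{lem:a3} with $e=d+1$; its hypothesis $n\ge 25e^2$ holds. This yields $Q\sim nQ_0+h_\lambda$ with $\lambda\in\ZZ_{\ge0}^6$, $\lambda_{\min}=0$, and coefficients of $h_\lambda$ bounded by $57(d+1)$, so $\wt(\lambda)=O(d+1)$. \Cref{lem:weight-trace} then gives $\wt(\lambda)>6d$. For the upper bound, suppose $\wt(\lambda)\ge 6d+7$. The expansion \eqref{eq:lambda-discriminant-expansion}, with the a priori bound on $\lambda$, gives $\disc(Q)\ge 2n^3+(6d+7)n^2-O((d+1)^2)n-O((d+1)^3)$. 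Since $2(n+d+1)^3=2n^3+(6d+6)n^2+O((d+1)^2)n+O((d+1)^3)$, this contradicts the hypothesis $\disc(Q)<2(n+d+1)^3$ when $n\ge 3000(d+1)^2$. Hence $\lambda\in\Lambda_d$.

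\emph{Injectivity on orbits.} Suppose $nQ_0+h_\lambda=(nQ_0+h_\mu)\circ U$ with $\lambda,\mu\in\Lambda_d$ and $U\in\operatorname{GL}_3(\ZZ)$. Since $2\wt<n$, \Cref{lem:wtmin} says the six vector pairs of smallest value are exactly $S(Q_0)$ for both forms, so $U$ permutes $S(Q_0)$. A unimodular map permuting the minimal vectors of $Q_0$ maps $\vec x\vec x^T$ onto that set for $\vec x\in S(Q_0)$; these tensors span $\Sym^2$ by perfection, so $U$ preserves $Q_0$, i.e.\ $U\in\operatorname{Aut}(Q_0)$. This uses $Q_0=\tfrac12\sum\eta_i$-type reasoning, or more simply the fact that $Q_0\circ U$ agrees with $Q_0$ on the six $\vec v_i$, so their $\eta$-coordinates coincide. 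Therefore $U=\pm g_\pi$, and $h_\lambda=h_\mu\circ g_\pi$. By \eqref{eq:evaluation-equivariance} and uniqueness of coordinates, $\lambda$ and $\mu$ lie in the same orbit.

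The main obstacle is bounding the explicit constants in $C_1$ and $C_0$ so that the threshold $3000(d+1)^2$ suffices in both the lower-bound step and the surjectivity step.
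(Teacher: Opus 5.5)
Your architecture matches the paper's, and every step except one goes through as you describe. The matching pieces are equivariance for well-definedness, then \Cref{lem:wtmin} with strict determinant--trace and an expansion estimate for the image, then \Cref{lem:a3} with \Cref{lem:weight-trace} for surjectivity, and finally the $S(Q_0)$-permutation argument for injectivity. Your direct use of strictness in \eqref{eq:determinant-trace-bound} ($\lambda_{\min}=0<\wt(\lambda)$ rules out $Q=cQ_0$) is a clean form of the paper's equality-case argument. In the image step the coefficients of $h_\lambda$ are bounded by $\wt(\lambda)\le 6e$ with $e=d+1$. Hence $|C_1|\le 24(6e)^2$ and $|C_0|\le 8(6e)^3$, and $n\ge 3000e^2$ is ample there.

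The gap is the upper weight bound in the surjectivity step, which is the obstacle you flagged. It does not close with the bound you invoke. \Cref{lem:a3} only says the coefficients of $h_\lambda$ are below $57e$, and term-counting then bounds $|C_1(\lambda)|$ only by about $24\cdot 57^2e^2\approx 7.8\cdot 10^4e^2$. So your inequality $\disc(Q)\ge 2n^3+(6d+7)n^2-O(e^2)n-O(e^3)$ contradicts $\disc(Q)<2(n+e)^3$ only for $n\gtrsim 7.8\cdot 10^4 e^2$. As written, you prove the theorem with a much larger constant, not with $3000$.

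The missing idea is to compute $C_1$ and $C_0$ in the pre-$Q_0$-chart from the proof of \Cref{lem:a3}. There $Q=nQ_{\vec c}+h_0$ with $a_0+b_0\le 3e$ and $|k_1|+|k_2|+|k_3|<51e$, and the discriminant is unchanged by the unimodular chart change. The formulas \eqref{eq:c1-c0-formulas} then give $|C_1|<2916e^2$ and $|C_0|<12716e^3$. So $\wt(\lambda)\ge 6e+1$ forces $0<-n^2+2922e^2n+12718e^3$, which is impossible once $n\ge 2927e^2$. This is exactly where the constant $3000$ comes from.

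Alternatively, you can stay in the $Q_0$-chart if you use $\lambda\ge 0$ and keep the full term $(\wt(\lambda)-6e)n^2$ instead of replacing $\wt(\lambda)$ by $6d+7$. Write $M=A_{Q_0}^{-1}A_{h_\lambda}$. Then $C_1=\wt(\lambda)^2/4-\operatorname{tr}(M^2)$, and $\operatorname{tr}(M^2)$ is the squared Frobenius norm of $A_{Q_0}^{-1/2}A_{h_\lambda}A_{Q_0}^{-1/2}$. The triangle inequality with $\operatorname{tr}\bigl((A_{Q_0}^{-1}A_{\eta_i})^2\bigr)=5/4$ gives $C_1(\lambda)\ge -\wt(\lambda)^2$ and $|C_0(\lambda)|\le 2(5/12)^{3/2}\wt(\lambda)^3$. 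A concavity check in $\wt(\lambda)$ over the range allowed by \Cref{lem:a3} then also closes the argument at $n\ge 3000e^2$.
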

\begin{proof}
    Let $e=d+1$. First, suppose that $Q$ is a positive definite integral ternary quadratic form with minimum $n$ and discriminant in the stated range. Since $n\ge3000e^2\ge25e^2$, \Cref{lem:a3} gives an equivalent representative $Q_\lambda=nQ_0+h_\lambda$ with $\lambda\in\ZZ_{\ge0}^6$ and $\lambda_{\min}=0$. The lower discriminant bound and \Cref{lem:weight-trace}
    then give $\wt(\lambda)>6d$.

    To obtain the upper bound on the weight, we compare \eqref{eq:lambda-discriminant-expansion} with the upper bound on the discriminant:
    \begin{equation}
        \label{eq:upper-weight-comparison}
        0<2(n+e)^3-\disc(Q_\lambda)
        =(6e-\wt(\lambda))n^2+(6e^2-C_1(\lambda))n
        +2e^3-C_0(\lambda).
    \end{equation}
    Since the discriminant is preserved by the change of variables in the proof of \Cref{lem:a3}, we compute $C_1(\lambda)$ and $C_0(\lambda)$ in the pre-$Q_0$-chart, where sharper coefficient bounds are available. In the notation there, expanding the discriminant gives
    \begin{equation}
        \label{eq:c1-c0-formulas}
        \begin{aligned}
            C_1(\lambda)
            &=4a_0b_0+c_1k_2k_3+c_2k_1k_3+c_3k_1k_2
            -k_1^2-k_2^2-k_3^2\\
            &\quad-2a_0c_2k_2-2b_0c_3k_3,\\
            C_0(\lambda)
            &=k_1k_2k_3-a_0k_2^2-b_0k_3^2
        \end{aligned}
    \end{equation}
    for some $\vec c\in\mathcal A$. The bounds in \eqref{eq:a3-pre-chart-bounds}, together with $0\le a_0,b_0\le3e$ and $|c_i|\le1$, give $|C_1(\lambda)|<2916e^2$ and $|C_0(\lambda)|<12716e^3$. Similarly, if $\wt(\lambda)>6e$, then $\wt(\lambda)\ge6e+1$, and the coefficient bounds above together with \eqref{eq:upper-weight-comparison} imply
    \[
        0<-n^2+2922e^2n+12718e^3.
    \]
    The right-hand side is negative when $n\ge2927e^2$, contradicting the hypothesis $n\ge3000e^2$. Thus, $\wt(\lambda)\le6e$ and $\lambda\in\Lambda_d$, proving that every equivalence class in the stated range is represented by some $Q_\lambda$ with $\lambda\in\Lambda_d$.

    For the reverse direction, let $\lambda\in\Lambda_d$ with $Q_\lambda\colonequals nQ_0+h_\lambda$. Since $\wt(\lambda)\le6e\le n$ and $\lambda_{\min}=0$, \Cref{lem:wtmin} gives $\min(Q_\lambda)=n$.
    Moreover, the contrapositive of \Cref{lem:weight-trace} gives $\disc(Q_\lambda)\le2(n+e)^3$. If equality held, then
    \[
        n+e=\sqrt[3]{\frac{\disc(Q_\lambda)}2}
        \le n+\frac{\wt(\lambda)}6\le n+e,
    \]
    which forces equality in both inequalities above. Equality in the second gives $\wt(\lambda)=6e$, while equality in the first is the equality case of the determinant-trace bound \eqref{eq:determinant-trace-bound}. The latter gives $Q_\lambda=(n+e)Q_0$, and hence $\lambda_i=e$ for every $i$, contradicting $\lambda_{\min}=0$. Thus, we have $\disc(Q_\lambda)< 2(n+e)^3$.

    For the lower bound, we similarly compare \eqref{eq:lambda-discriminant-expansion} with $2(n+d)^3$. Since $\wt(\lambda)\ge6d+1$, we have
    \begin{align*}
        \disc(Q_\lambda)-2(n+d)^3
        &=(\wt(\lambda)-6d)n^2
        +(C_1(\lambda)-6d^2)n+C_0(\lambda)-2d^3 \\
        &\ge n^2+(C_1(\lambda)-6d^2)n+C_0(\lambda)-2d^3.
    \end{align*}
    Here the weight bound allows us instead to work directly in the $Q_0$-chart. Write
    \[
        h_\lambda=aX^2+bY^2+cZ^2+rYZ+sXZ+tXY.
    \]
    By \eqref{eq:hlam-expansion}, each of these coefficients has absolute value at most $\wt(\lambda)\le6e$. Expanding $\disc(nQ_0+h_\lambda)$ in these coefficients gives
    \begin{align*}
        C_1(\lambda)
        &=4(ab+ac+bc)+rs+rt+st-r^2-s^2-t^2
        -2ar-2bs-2ct,\\
        C_0(\lambda)
        &=4abc+rst-ar^2-bs^2-ct^2.
    \end{align*}
    Counting terms with multiplicity, we find that the formulas for $C_1(\lambda)$ and $C_0(\lambda)$ contain $24$ terms of degree $2$ and $8$ terms of degree $3$, respectively. Hence, $|C_1(\lambda)|\le24(6e)^2$ and $|C_0(\lambda)|\le8(6e)^3$.
    Substituting these bounds gives
    \[
        \disc(Q_\lambda)-2(n+d)^3 \ge n^2-870e^2n-1730e^3.
    \]
    The right-hand side is positive for every $n\ge3000e^2$, and hence $\disc(Q_\lambda)>2(n+d)^3$.

    It remains to show that $Q_\lambda$ and $Q_{\lambda'}$ are integrally equivalent exactly when $\lambda$ and $\lambda'$ lie in the same $S_4$-orbit. By the equivariance in \eqref{eq:evaluation-equivariance}, tuples in the same $S_4$-orbit give integrally equivalent forms. 
    
    Conversely, suppose that $Q_\lambda$ and $Q_{\lambda'}$ are integrally equivalent for $\lambda,\lambda'\in\Lambda_d$, and choose $U\in\operatorname{GL}_3(\ZZ)$ such that $Q_\lambda=Q_{\lambda'}\circ U$. Since $2\wt(\lambda),2\wt(\lambda')\le12e<n$, the additional conclusion of \Cref{lem:wtmin} shows that $U$ permutes the six pairs in $S(Q_0)$. The form $Q_0$ is uniquely determined by taking the value $1$ on these pairs, so $U\in\operatorname{Aut}(Q_0)$. Hence, its permutation of $S(Q_0)$ comes from the natural $S_4$-action. If $U\vec v_i=\pm\vec v_j$, then
    \[
        \lambda_i=Q_\lambda(\vec v_i)-n
        =Q_{\lambda'}(U\vec v_i)-n
        =\lambda'_j.
    \]
    Thus, the corresponding element of $S_4$ carries $\lambda$ to $\lambda'$, so they lie in the same orbit.
\end{proof}

The constant $3000$ is chosen to keep the hypothesis simple and is not intended to be optimal. For prime discriminants, it can be replaced by a substantially smaller, computable bound; see \Cref{rem:prime-discriminant-bound}.

We investigate two consequences of the parameterization in \Cref{thm:wt-param}. First, in \Cref{sec:wisde-primes}, we use it to give an algorithm that detects and constructs supersingular elliptic curves $E$ with near-maximal $\delta_E$. Second, in \Cref{sec:wisde-deficits}, we use it to count the primes $p$ for which $\delta(p)$ is near-maximal.

\section{Parameterizing Wisde Primes by Cubics}
\label{sec:wisde-primes}

Fix $d\ge0$ and, for each prime $p$, set
\[
    n \colonequals \left\lfloor\sqrt[3]{p/2}\right\rfloor-d.
\]
Combining \Cref{thm:wt-param,prop:curve_tqf} gives the following cubic characterization of the primes $p$ for which some supersingular $E/\overline{\mathbb F}_p$ satisfies $\delta_E=n$.

\begin{theorem}
    \label{thm:fixed-distance-cubics}
    For every $d\in\ZZ_{\ge0}$, there exists a finite set $\mathcal P_d\subset\ZZ[x]$ of cubic polynomials such that, for every prime $p\ge 2\left(3000(d+1)^2+d\right)^3$, there exists a supersingular elliptic curve $E/\overline{\mathbb F}_p$ satisfying
    \[
        \delta_E=\left\lfloor\sqrt[3]{p/2}\right\rfloor-d
    \]
    if and only if
    \[
        p=P\left(\left\lfloor\sqrt[3]{p/2}\right\rfloor-d\right)
    \]
    for some $P\in\mathcal P_d$.
    Moreover, $\#\mathcal P_d=O((d+1)^4)$, and $\mathcal P_d$ can be constructed in $O((d+1)^4\log^2(d+1))$ bit operations.
\end{theorem}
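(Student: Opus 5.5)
Set $e=d+1$ and define
\[
    \mathcal P_d\colonequals\left\{\,P_\lambda(x)\colonequals\disc(xQ_0+h_\lambda)=2x^3+\wt(\lambda)x^2+C_1(\lambda)x+C_0(\lambda)\;:\;\lambda\in\Lambda_d\,\right\},
\]
with one polynomial per $S_4$-orbit, or simply per element of $\Lambda_d$, since equivalent tuples give the same cubic. Each $P_\lambda$ is a cubic in $\ZZ[x]$ by \eqref{eq:lambda-discriminant-expansion}.

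For a prime $p\ge 2(3000e^2+d)^3$, put $N=\lfloor\sqrt[3]{p/2}\rfloor$ and $n=N-d$. Then $N\ge 3000e^2+d$, so $n\ge 3000e^2$ and \Cref{thm:wt-param} applies to $n$. We also have $2N^3\le p<2(N+1)^3$, and since $p$ is an odd prime and $2N^3$ is even, the lower inequality is strict. In terms of $n$ this reads $2(n+d)^3<p<2(n+d+1)^3$.

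\emph{Forward direction.} Suppose some supersingular $E$ has $\delta_E=n$. By \Cref{prop:curve_tqf} there is a form $Q$ with $\min(Q)=n$ and $\disc(Q)=p$. This $Q$ lies in the range of \Cref{thm:wt-param}, so $Q\sim nQ_0+h_\lambda$ for some $\lambda\in\Lambda_d$. Hence $p=\disc(nQ_0+h_\lambda)=P_\lambda(n)$.

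\emph{Converse.} Suppose $p=P_\lambda(n)$ for some $\lambda\in\Lambda_d$. Since $\wt(\lambda)\le 6e\le n$, \Cref{lem:wtmin} shows that $nQ_0+h_\lambda$ is positive definite with minimum $n$, and its discriminant is $p$. \Cref{prop:curve_tqf} then produces a curve $E$ with $\delta_E=n$. This direction does not even need the interval bounds from \Cref{thm:wt-param}.

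\emph{Counting.} $\#\mathcal P_d\le\#\Lambda_d$, and $\Lambda_d$ is contained in the set of nonnegative integer $6$-tuples of weight at most $6e$. There are $\binom{6e+6}{6}=O(e^6)$ such tuples, which is too crude. The restriction $6d<\wt\le 6e$ leaves at most $6$ weights $w$, each with $\binom{w+5}{5}=O(e^5)$ tuples, still $O(e^5)$. Passing to $S_4$-orbits divides by at most $24$, so it does not change the exponent either.

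Getting $O(e^4)$ is therefore the main obstacle, and I expect it to come from the polynomials themselves rather than from $\lambda$. Distinct orbits may well give the same cubic. The cubic is determined by the triple $(\wt(\lambda),C_1(\lambda),C_0(\lambda))$. Here $\wt$ takes $6$ values, $|C_1|=O(e^2)$ and $|C_0|=O(e^3)$ by the bounds in the proof of \Cref{thm:wt-param}. That only gives $O(e^5)$ triples, not $O(e^4)$.

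A sharper route is to normalize via $\lambda_{\min}=0$ and use the $S_4$-action to place the zero in a fixed coordinate. This leaves $5$ free coordinates with fixed weight up to $6$ choices, i.e.\ $O(e^4)$ tuples per weight. So $\#\Lambda_d/S_4$-representatives with $\lambda_1=0$ and $\wt$ in a window of $6$ values number $O(e^4)$; I would verify this count carefully.

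\emph{Construction cost.} To build $\mathcal P_d$, enumerate these $O(e^4)$ representatives. For each, compute $C_1(\lambda)$ and $C_0(\lambda)$ from the explicit formulas in the proof of \Cref{thm:wt-param}, using $O(1)$ multiplications of $O(\log e)$-bit integers. Deduplicate by sorting the resulting triples. Naively sorting costs $O(e^4\log e)$ comparisons, each of $O(\log e)$ bits; with the arithmetic this gives a total of $O(e^4\log^2 e)$ bit operations, matching the claimed bound.
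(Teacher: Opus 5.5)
Your proof is correct and follows the paper's argument. The forward direction goes through \Cref{prop:curve_tqf} and \Cref{thm:wt-param}, and your converse uses \Cref{lem:wtmin}, which is exactly the part of the bijection in \Cref{thm:wt-param} that this direction needs. The $O((d+1)^4)$ count comes, as in the paper, from $\lambda_{\min}=0$ pinning one coordinate to zero, which leaves $\binom{w+4}{4}$ weak compositions for each of the six weights $w$. Your $S_4$-normalization and the speculation about collisions among cubics are unnecessary: the paper simply inserts the zero into each of the six positions at the cost of a factor of $6$. Your first $\binom{w+5}{5}$ estimate was loose only because it ignored the condition $\lambda_{\min}=0$.
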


\begin{proof}
    Let $n\colonequals\lfloor\sqrt[3]{p/2}\rfloor-d$. The assumption on $p$ ensures that $n\ge3000(d+1)^2$; moreover, the definition of $n$ and the floor function give
    \[
        2(n+d)^3<p<2(n+d+1)^3,
    \]
    with the first inequality strict because $p$ is prime and $n+d>1$.

    By \Cref{thm:wt-param}, the equivalence classes of positive definite integral ternary quadratic forms of minimum $n$ and discriminant in the interval above are parameterized by $nQ_0+h_\lambda$ as $\lambda$ ranges over representatives of $\Lambda_d/S_4$. For each such $\lambda$, define
    \[
        P_\lambda(x)\colonequals\disc(xQ_0+h_\lambda).
    \]
    Each $P_\lambda(x)$ is a cubic polynomial with integer coefficients depending only on $\lambda$.
    Moreover, since integral equivalence preserves discriminants, $P_\lambda$ depends only on the orbit $[\lambda]\in\Lambda_d/S_4$, so we may define
    \[
        \mathcal P_d\colonequals\{P_\lambda:[\lambda]\in\Lambda_d/S_4\}.
    \]
    Finally, by \Cref{prop:curve_tqf}, a supersingular $E/\overline{\mathbb F}_p$ with $\delta_E=n$ exists if and only if $p$ is the discriminant of one of these forms, equivalently, if and only if $p=P_\lambda(n)$ for some $P_\lambda\in\mathcal P_d$.

    We now describe an algorithm for constructing representatives of $\Lambda_d/S_4$ from \emph{weak compositions}. That is, for each $w\in\{6d+1,\ldots,6d+6\}$, we enumerate the tuples
    \[
        (\mu_1,\ldots,\mu_5)\in\ZZ_{\ge0}^5,
        \qquad
        \mu_1+\cdots+\mu_5=w.
    \]
    Inserting a zero in each of the six possible positions produces every element of $\Lambda_d$, possibly with repetition.
    For each $w$, there are $\binom{w+4}{4}$ such compositions, and so we have
    \[
        \#\Lambda_d
        \le 6\sum_{w=6d+1}^{6d+6}\binom{w+4}{4}
        \le 36\binom{6d+10}{4}
        =O((d+1)^4).
    \]
    We then deduplicate these tuples under the $S_4$-action to obtain one representative of each orbit and compute the corresponding polynomials $P_\lambda$. Since $|S_4|=24$, this does not change the asymptotic bound, and hence $\#\mathcal P_d=O((d+1)^4)$.

    Finally, the explicit formula in \eqref{eq:lambda-discriminant-expansion} shows that each $P_\lambda$ can be computed using a constant number of additions and multiplications, independent of $d$, on integers of bit length $O(\log(d+1))$. If multiplication of such integers is taken to require $O(\log^2(d+1))$ bit operations, then the construction of $\mathcal P_d$ requires $O((d+1)^4\log^2(d+1))$ bit operations.
\end{proof}

Applying the theorem above simultaneously for $0\le d\le D$ gives the following algorithm for detecting when $\delta(p)$ lies within $D$ of its upper bound.

\begin{corollary}
    \label{cor:deficit-alg}
    For every $D\in\ZZ_{\ge0}$, there exists an algorithm which, given a prime $p\ge 2(3000(D+1)^2+D)^3$, runs in $O((D+1)^5\log^2 p)$ bit operations and either computes $\delta(p)$ or certifies that
    \[
        \delta(p)<\left\lfloor\sqrt[3]{p/2}\right\rfloor-D.
    \]
    In the first case, assuming GRH, the algorithm can also return a supersingular elliptic curve $E/\mathbb F_{p^2}$ satisfying $\delta_E=\delta(p)$ in probabilistic polynomial time in $\log p$.
\end{corollary}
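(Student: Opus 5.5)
The algorithm applies \Cref{thm:fixed-distance-cubics} for $d=0,1,\ldots,D$ in increasing order of $d$. Write $N\colonequals\lfloor\sqrt[3]{p/2}\rfloor$.

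First, compute $N$ exactly. This is an integer cube root of $\lfloor p/2\rfloor$, which can be found by binary search or Newton iteration using $O(\log p)$ multiplications of $O(\log p)$-bit integers. Its cost is negligible next to the main loop.

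Second, precompute the families $\mathcal P_0,\ldots,\mathcal P_D$. By \Cref{thm:fixed-distance-cubics}, this costs
\[
\sum_{d\le D}O\bigl((d+1)^4\log^2(d+1)\bigr)
\]
bit operations. It produces $\sum_{d\le D}\#\mathcal P_d=O((D+1)^5)$ cubics, each with coefficients of bit length $O(\log(D+1))$.

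Third, loop over $d=0,1,\ldots,D$. For each $d$, set $n=N-d$ and evaluate every $P\in\mathcal P_d$ at $n$ by Horner's rule. The integers involved have $O(\log p)$ bits, so each evaluation takes a constant number of multiplications of cost $O(\log^2 p)$. If some $P(n)=p$, output $\delta(p)=n$ and stop.

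The hypothesis $p\ge 2(3000(D+1)^2+D)^3$ implies the threshold $p\ge 2(3000(d+1)^2+d)^3$ of \Cref{thm:fixed-distance-cubics} for every $d\le D$. So the theorem applies at each step: a match at $d$ means exactly that some supersingular $E$ has $\delta_E=N-d$. Since we stopped at the first match, no curve has $\delta_E=N-d'$ for any $d'<d$. The upper bound $\delta(p)\le N$ from \cite[Theorem~1.1]{aubry_minimal_2026} rules out larger values, so indeed $\delta(p)=N-d$.

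If no match occurs for any $d\le D$, then no curve has $\delta_E\in[N-D,N]$. Combined with $\delta(p)\le N$, this certifies $\delta(p)<N-D$. The total cost is $O((D+1)^5)$ evaluations at $O(\log^2 p)$ each, plus the $p$-independent precomputation, giving $O((D+1)^5\log^2 p)$ bit operations.

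For the curve in the first case, the matching polynomial $P=P_\lambda$ supplies an explicit form $Q=nQ_0+h_\lambda$ via \eqref{eq:hlam-expansion}. This form has $\disc(Q)=p$ by construction and $\min(Q)=n$ by \Cref{lem:wtmin}. By \Cref{rem:curve_tqf_effective}, the even Clifford algebra of $Q$ gives an explicit maximal order $\mathcal O\subset B_{p,\infty}$. Solving the constructive Deuring problem for $\mathcal O$ under GRH \cite{cremona_deuring_2024} yields $E/\mathbb F_{p^2}$ in probabilistic polynomial time. Then \Cref{prop:curve_tqf} gives $\delta_E=\min(Q)=n=\delta(p)$.

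The only delicate points are routine. One must check the bookkeeping that the threshold for $D$ dominates every $d\le D$. One must also keep track of the orbit representative $\lambda$ with each polynomial, so that the matching form can be recovered.
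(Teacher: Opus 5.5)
Your proposal is correct and follows the paper's proof essentially step for step. You test $p=P(m-d)$ over $P\in\mathcal P_d$ for $d=0,1,\ldots,D$ in increasing order. At the first match you read off $\delta(p)=m-d$, using the upper bound $\delta(p)\le m$ and the ``only if'' direction of \Cref{thm:fixed-distance-cubics} for the smaller $d$. If nothing matches you certify $\delta(p)<m-D$. You also retain $\lambda$ so that a match yields $Q=(m-d)Q_0+h_\lambda$ for the effective Deuring step.

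One piece of bookkeeping needs fixing: the cost of computing $N=\lfloor\sqrt[3]{p/2}\rfloor$.
\begin{itemize}
\item As you describe it, with $O(\log p)$ multiplications of $O(\log p)$-bit integers, this costs $O(\log^3 p)$ bit operations. That is not negligible against $O((D+1)^5\log^2 p)$ when $D$ is small; for $D=0$ it exceeds the claimed bound.
\item Newton's iteration with precision doubling computes the integer cube root in $O(\log^2 p)$ bit operations under the same schoolbook multiplication model. The paper's proof simply takes $m$ as given.
\item Absorbing the precomputation cost $O((D+1)^5\log^2(D+1))$ into the total silently uses $\log(D+1)=O(\log p)$. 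This holds because $p\ge 2\cdot 3000^3(D+1)^6\ge (D+1)^6$, and you should say so.
\end{itemize}
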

\begin{proof}
    Let $m\colonequals\lfloor\sqrt[3]{p/2}\rfloor$. For each $0\le d\le D$, construct $\mathcal P_d$ and test whether
    \[
        p=P(m-d)
    \]
    for some $P\in\mathcal P_d$. The lower bound on $p$ allows us to apply \Cref{thm:fixed-distance-cubics} for every $d\le D$. We perform these tests in increasing order of $d$ so that the first match occurs at the smallest $d$ and the resulting curve $E$ with $\delta_E=m-d$ is maximal in characteristic $p$. If no match occurs, then we know $\delta(p)<m-D$.

    Computing $\mathcal P_d$ for each $0\le d\le D$ produces $O((D+1)^5)$ polynomials in total and requires $O((D+1)^5\log^2(D+1))$ bit operations. Each evaluation above requires a constant number of additions and multiplications on integers of bit length $O(\log p)$. If multiplication is taken to require $O(\log^2 p)$ bit operations, then the construction and all evaluations require $O((D+1)^5\log^2 p)$ bit operations in total.

    Finally, when constructing $\mathcal P_d$, we may retain one representative $\lambda$ for each polynomial $P_\lambda$. A match $p=P_\lambda(m-d)$ then recovers the ternary quadratic form
    \[
        Q=(m-d)Q_0+h_\lambda.
    \]
    Under GRH, we can use the effective Deuring correspondence outlined in \Cref{rem:curve_tqf_effective} to convert the form $Q$ to a supersingular elliptic curve $E/\mathbb F_{p^2}$ with $\delta_E=\min(Q)=\delta(p)$ in probabilistic polynomial time in $\log p$.
\end{proof}

\begin{remark}
    \label{rem:prime-discriminant-bound}
    The lower bounds in \Cref{thm:fixed-distance-cubics,cor:deficit-alg} are inherited from the uniform bound in \Cref{thm:wt-param}, which applies to all discriminants, and their numerical constants are not intended to be sharp. Appendix~\ref{app:fixed-deficit-bound} gives a finite procedure that, for each fixed $d$, determines the least $n$ from which $\mathcal P_d$ detects every relevant prime discriminant. Applying these results simultaneously for $0\le d\le D$ replaces the large constant in the $O(D^6)$ size condition of \Cref{cor:deficit-alg} by a computable and sharper one.
\end{remark}

We provide an implementation of this algorithm in the computational supplement \cite{swanson_computational_2026}. With the extension to $n\ge27$ established in Appendix~\ref{app:d0-computation}, taking $D=0$ independently reproduces the primes in \cite[Table~5.1]{aubry_minimal_2026} that attain the general upper bound on $\delta(p)$. Aubry, Oyono, and Vincent compute these primes by enumerating the maximal-order types in $B_{p,\infty}$ and determining $\delta_E$ for each corresponding curve class, whereas our algorithm computes them efficiently from the cubic family $\mathcal P_0$.

We now consider the case $D=0$, where the family $\mathcal P_0$ gives an explicit classification of the wisde primes. After filtering out cubics that clearly cannot take on prime values for $n\ge27$, we obtain the following result.

\begin{corollary}
    \label{thm:wisde-classification}
    Let $n \ge 27$ and let $2n^3 \le p < 2(n + 1)^3$ be prime. Then $p$ is wisde if and only if $p = P_i(n)$ for some $1\le i\le9$, where
    \begin{equation}
        \label{eq:cubic-families}
        \begin{array}{@{}ll@{}}
            P_1(x)=2x^3+4x^2-3x-4,
            &P_2(x)=2x^3+5x^2+x-1,\\
            P_3(x)=2x^3+5x^2-7x-9,
            &P_4(x)=2x^3+6x^2+x-4,\\
            P_5(x)=2x^3+6x^2-9x-24,
            &P_6(x)=2x^3+6x^2-x-1,\\
            P_7(x)=2x^3+6x^2+x-2,
            &P_8(x)=2x^3+6x^2-13x-16,\\
            P_9(x)=2x^3+6x^2-3x-6.
        \end{array}
    \end{equation}
\end{corollary}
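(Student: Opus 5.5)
The plan is to specialize \Cref{thm:fixed-distance-cubics} to $d=0$ and then make the finite family $\mathcal P_0$ explicit. First, the wisde condition reduces to the existence of a single curve. By the upper bound of Aubry, Oyono, and Vincent, $\delta(p)\le\lfloor\sqrt[3]{p/2}\rfloor$. If $2n^3\le p<2(n+1)^3$, then $\lfloor\sqrt[3]{p/2}\rfloor=n$, and since $p$ is prime and $n>1$ we in fact have $2n^3<p<2(n+1)^3$. So $p$ is wisde exactly when some supersingular $E/\overline{\mathbb F}_p$ has $\delta_E=n$. By \Cref{prop:curve_tqf}, this holds exactly when there is a positive definite integral ternary form with $\min(Q)=n$ and $\disc(Q)=p$.

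For $n\ge3000$, \Cref{thm:wt-param} with $d=0$ says that such forms are exactly the forms $nQ_0+h_\lambda$ with $[\lambda]\in\Lambda_0/S_4$. Here $\Lambda_0$ consists of the tuples $\lambda\in\ZZ_{\ge0}^6$ with $\lambda_{\min}=0$ and $1\le\wt(\lambda)\le6$. I would enumerate these orbits explicitly, using the identification of the coordinates with edges $\{a,b\}$ of $K_4$, so an orbit is an $S_4$-class of edge-weightings of $K_4$ with total weight at most $6$ and at least one zero edge. For each orbit representative I would compute $P_\lambda(x)=\disc(xQ_0+h_\lambda)$ from \eqref{eq:hlam-expansion}. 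By \eqref{eq:lambda-discriminant-expansion}, each such polynomial is $2x^3+\wt(\lambda)x^2+C_1(\lambda)x+C_0(\lambda)$.

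Next I would discard the polynomials that cannot take prime values for $n\ge27$. A polynomial is discarded if it is reducible over $\ZZ$, or if it has a fixed prime divisor, meaning $\gcd_n P(n)>1$ (checked modulo $2$ and $3$, which suffice for a cubic with leading coefficient $2$). For example, a polynomial with even $x^2$ and constant coefficients and even $x$-coefficient is identically even. In the reducible case, one factor must equal $\pm1$, which cannot happen for $n\ge27$ once the factor's degree is positive. I expect the survivors, after merging equal polynomials coming from distinct orbits, to be exactly $P_1,\dots,P_9$. Their $x^2$-coefficients $4,5,6$ record the weights that survive. This direction handles every $n\ge3000$.

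The main obstacle is the range $27\le n<3000$, where \Cref{thm:wt-param} does not apply directly. There I would invoke the finite procedure referred to in \Cref{rem:prime-discriminant-bound}, carried out for $d=0$ in Appendix~\ref{app:d0-computation}. The reverse direction, that $P_i(n)$ prime implies $p$ is wisde, needs only \Cref{lem:wtmin} (since $\wt(\lambda)\le6\le n$) together with the discriminant bounds, which are easily checked for $n\ge27$. The forward direction needs a different argument. I would start from the sign-normalized Eisenstein-reduced representative with $n(n+a_0)(n+b_0)<(n+1)^3$, which forces $a_0+b_0\le3$. I would then bound the cross coefficients, either by an explicit finite search in this range or by sharpening the constants in \Cref{lem:a3} for $e=1$, and verify by computation that every form of minimum $n$ and prime discriminant in $(2n^3,2(n+1)^3)$ is equivalent to some $nQ_0+h_\lambda$ with $\lambda\in\Lambda_0$. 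Combining that computation with the analysis above yields the stated classification for all $n\ge27$.
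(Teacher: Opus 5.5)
Your plan follows the paper's proof. For $n\ge3000$ you specialize \Cref{thm:fixed-distance-cubics} to $d=0$ and discard the cubics in $\mathcal P_0$ that have a fixed divisor (only $2$ and $3$ can occur) or factor over $\ZZ$. For $27\le n<3000$ you rely on the \Cref{lem:a3}-based finite search of Appendix~\ref{app:d0-computation}, and the converse comes from explicit weight-$\le6$ forms together with \Cref{lem:wtmin}.

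The one point to adjust is what you check in the small range. The paper only checks that each prime discriminant the search produces equals some $P_i(n)$. Your stronger claim, that every prime-discriminant form is equivalent to some $nQ_0+h_{\lambda'}$ with $\lambda'\in\Lambda_0$, would need an extra integral-equivalence test for every weight-$\ge7$ chart the search returns. Such charts really do occur (the paper reports $42$ prime pairs for $n\ge25$). For example, $\lambda=(0,5,0,1,1,0)$ gives $\disc(xQ_0+h_\lambda)=2x^3+7x^2-21x-25$. This is a form of minimum $x$, and for $x\ge15$ it is not equivalent to any form coming from $\Lambda_0$. At $x=26$ it realizes the exceptional prime $39313$. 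At $x=27$ it still lies in the target interval, with composite discriminant $43877=17\cdot2581$. Matching $p$ against $P_1(n),\dots,P_9(n)$, as the paper does, is simpler and suffices.
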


\begin{proof}
    Suppose first that $p$ is wisde. For $n\ge3000$, \Cref{thm:fixed-distance-cubics} with $d=0$ yields $70$ orbits in $\Lambda_0/S_4$ and $69$ distinct cubic polynomials $P_\lambda$. Of these cubics, $37$ have a nontrivial fixed divisor: some integer $D>1$ divides $P_\lambda(m)$ for every $m\in\ZZ$.
    Another $23$ cubics factor over $\ZZ$, and evaluating their factors shows that they take no prime values for $n\ge27$.
    The remaining 9 cubics are the $P_1,\ldots,P_9$ above. Thus, $p=P_i(n)$ for some $i$ whenever $n\ge3000$, and \Cref{thm:fixed-distance-cubics} gives the converse in this range. Appendix~\ref{app:d0-computation} gives a complete enumeration of the remaining perturbations and shows that the only prime discriminant not detected by $P_1,\ldots,P_9$ occurs at $n=26$. It also exhibits, for each $P_i$, a form of minimum $n$ and discriminant $P_i(n)$, proving the converse for every $n\ge27$.
\end{proof}

\begin{figure}[ht]
    \centering
    \includegraphics[width=\textwidth]{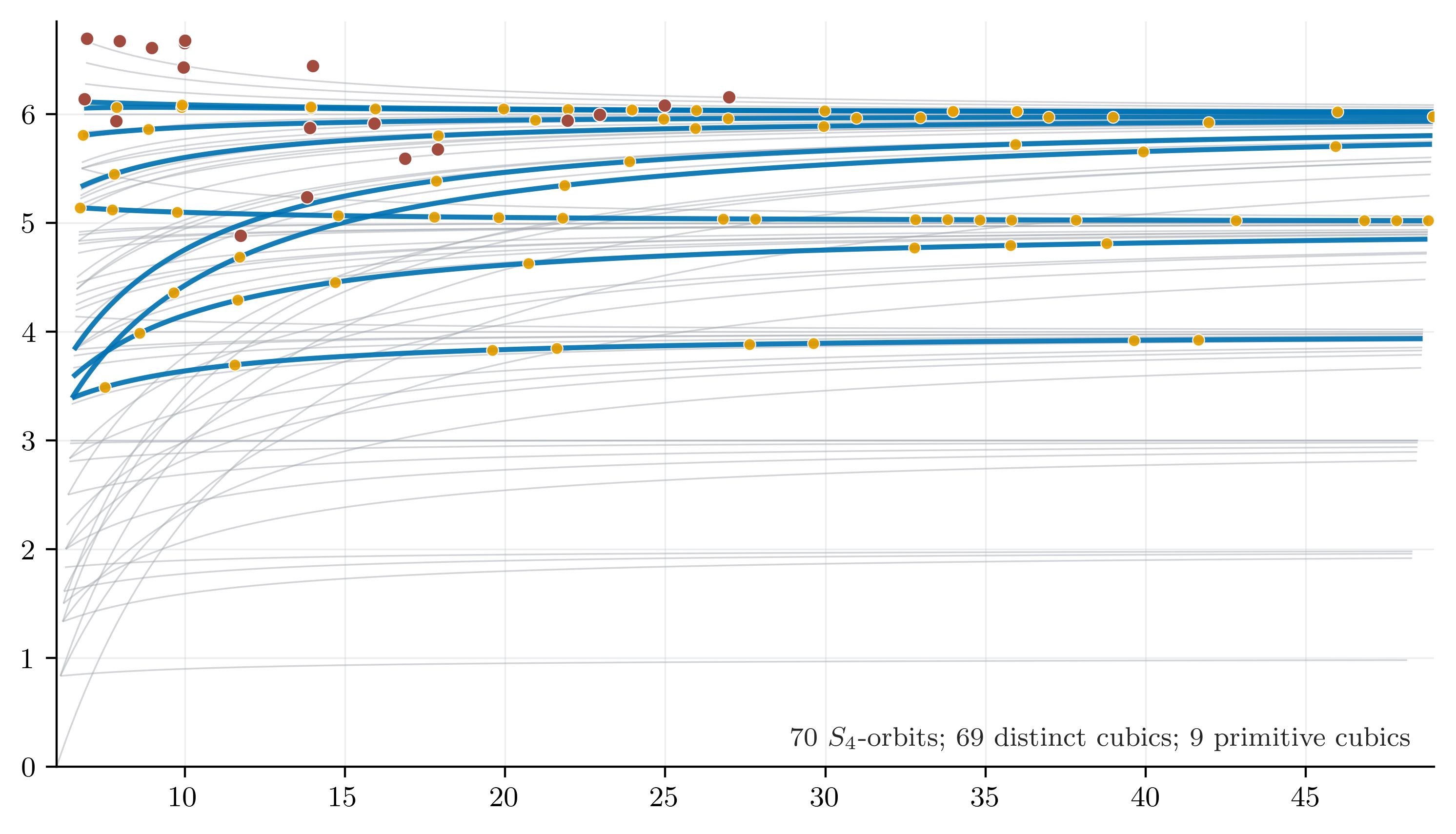}
    \caption{The $d=0$ cubics $P_\lambda\in\mathcal P_0$ from \Cref{thm:fixed-distance-cubics}. We plot $(P_\lambda(n)-2n^3)/n^2$ against $n$ for $2\le n\le10$. The faded cubics take no prime values for $n\ge27$; the nine blue cubics are those in \Cref{thm:wisde-classification}.}
    \label{fig:cubic-family-phase}
\end{figure}

The classification immediately gives an unconditional upper bound, showing that wisde primes are asymptotically negligible among all primes.

\begin{corollary}
    \label{cor:unconditional-wisde-count}
    For any $X \ge 2$,
    \[
        \#\{p\le X:p\text{ is wisde}\}
        \le 9(X/2)^{1/3}+O(1).
    \]
\end{corollary}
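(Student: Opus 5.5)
Split the wisde primes $p\le X$ by the value $n=\lfloor\sqrt[3]{p/2}\rfloor$. The primes with $n\le 26$ satisfy $p<2\cdot 27^3$, so there are only finitely many of them and they go into the $O(1)$ term. For every remaining wisde prime, $n\ge 27$ and $2n^3\le p<2(n+1)^3$. By \Cref{thm:wisde-classification} we then have $p=P_i(n)$ for some $1\le i\le 9$ with this very $n$. So the map $p\mapsto n$ sends wisde primes with $n\ge27$ into the set of integers $n\ge 27$ with $2n^3\le X$, and each $n$ has at most $9$ preimages, namely $P_1(n),\dots,P_9(n)$.

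It follows that the number of wisde primes $p\le X$ with $n\ge 27$ is at most $9\cdot\#\{n\ge 27: 2n^3\le p\le X \text{ for some such } p\}$. Since $n\le\sqrt[3]{p/2}\le (X/2)^{1/3}$, this is at most $9\lfloor (X/2)^{1/3}\rfloor\le 9(X/2)^{1/3}$. Adding the finitely many small wisde primes, which number at most $\pi(2\cdot 27^3)$, gives the bound with an absolute $O(1)$. This holds for every $X\ge 2$, since for small $X$ the count is bounded by the constant anyway.

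There is no genuine obstacle. The only point to check is that \Cref{thm:wisde-classification} pins $n$ to $p$, and it does: the hypothesis $2n^3\le p<2(n+1)^3$ forces $n=\lfloor\sqrt[3]{p/2}\rfloor$. Hence distinct primes coming from the same cubic correspond to distinct arguments, and no overcounting beyond the factor $9$ occurs. One could sharpen the constant by noting that several $P_i(n)$ cannot be simultaneously prime or in range, but the stated bound does not need this.
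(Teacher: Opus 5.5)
Your proposal is correct and follows essentially the same argument as the paper. Both set aside the finitely many wisde primes with $\lfloor\sqrt[3]{p/2}\rfloor\le 26$, apply \Cref{thm:wisde-classification} to write each remaining one as $P_i(n)$ with $n=\lfloor\sqrt[3]{p/2}\rfloor\le (X/2)^{1/3}$, and count at most nine values per $n$.
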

\begin{proof}
    All but finitely many wisde primes are covered by \Cref{thm:wisde-classification}. For each such $p\le X$, its integer $n=\lfloor(p/2)^{1/3}\rfloor$ is at most $(X/2)^{1/3}$, and there are at most nine values $P_i(n)$ for each $n$.
\end{proof}

The unconditional bound above still leaves open whether there are infinitely many wisde primes. By \Cref{thm:wisde-classification}, this becomes an instance of the classical problem of determining whether a polynomial in $\ZZ[x]$ takes prime values infinitely often \cite{bouniakowsky_diviseurs_1857}.
Even for an irreducible polynomial $P(x)\in\ZZ[x]$ of degree at least two with no fixed divisor, it remains unknown whether $P(n)$ is prime for infinitely many $n\in\ZZ_{\ge0}$. The single-polynomial Bateman--Horn conjecture predicts an affirmative answer and, more precisely, the following asymptotic.

\begin{conjecture}[Bateman--Horn \cite{bateman_heuristic_1962}]
    \label{conj:bateman-horn}
    Let $P(x)\in\ZZ[x]$ be irreducible with positive leading coefficient and no fixed divisor. Then\footnote{Here $f(N)\sim g(N)$ means that $f(N)/g(N)\to1$ as $N\to\infty$.}
    \[
        \#\{1\le n\le N:P(n)\text{ is prime}\}
        \sim C(P)\frac{N}{\deg(P)\log N},
    \]
    where
    \[
        C(P)=\prod_{\ell\ \mathrm{prime}}
        \frac{1-\rho_P(\ell)/\ell}{1-1/\ell},
        \quad
        \rho_P(\ell)=\#\{a\bmod\ell:P(a)\equiv0\pmod\ell\}.
    \]
\end{conjecture}

While we refer to the single-polynomial version above simply as the Bateman--Horn conjecture, the general conjecture concerns integers at which a collection of polynomials simultaneously takes prime values. It implies the twin-prime conjecture, the infinitude of Sophie Germain primes, and many other open problems in number theory. See \cite{aletheia-zomlefer_batemanhorn_2020} for an exposition.

\begin{corollary}
    \label{cor:bateman-horn-wisde-primes}
    Assume the Bateman--Horn conjecture. Then
    \[
        \#\{p\le X:p\text{ is wisde}\} \sim C\frac{X^{1/3}}{\log X},
        \quad \text{ where } \quad C=2^{-1/3}\sum_{i=1}^{9}C(P_i) \approx 14.12.
    \]
    In particular, there are infinitely many wisde primes.
\end{corollary}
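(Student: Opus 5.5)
The plan is to apply \Cref{conj:bateman-horn} to each $P_i$ and transfer counts in $n$ to counts in $p$ through \Cref{thm:wisde-classification}. The checks go in this order.

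First, I will verify the hypotheses of \Cref{conj:bateman-horn} for each of $P_1,\ldots,P_9$. Each has leading coefficient $2>0$. Irreducibility over $\mathbb Q$ reduces, for a cubic, to having no rational root; by the rational root test the candidates are $\pm k$ and $\pm k/2$ with $k$ dividing the constant term, a finite check for each $P_i$. The absence of a fixed divisor follows from the standard fact that a fixed prime divisor $\ell$ of a degree-$3$ polynomial must satisfy $\ell\le 3$ (unless it divides the content, which is $1$ here). So it suffices to check $\ell=2$ and $\ell=3$: exhibit residues $a\bmod 2$ and $a\bmod 3$ with $P_i(a)\not\equiv0$. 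The proof of \Cref{thm:wisde-classification} already states that these nine survive the fixed-divisor and factorization filters, so this step is routine.

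Second, I will show the prime sets are disjoint and relate $p$ to $n$. For $n\ge27$, each $P_i(n)=2n^3+c_in^2+O(n)$ with $c_i\in\{4,5,6\}$, so $2n^3\le P_i(n)<2(n+1)^3=2n^3+6n^2+6n+2$; one should confirm this range explicitly for large $n$ (the $P_i$ with $c_i=6$ have linear term at most $+1$, so the upper bound holds). Hence $n=\lfloor (P_i(n)/2)^{1/3}\rfloor$ and $P_i(n)\le X$ iff $n\le N(X)$, where $N(X)\colonequals(X/2)^{1/3}+O(1)$. Distinct $i$ can give the same prime for the same $n$ only if $P_i(n)=P_j(n)$, which happens for only finitely many $n$ since the $P_i$ are distinct. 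Thus, up to $O(1)$,
\[
\#\{p\le X: p\text{ wisde}\}=\sum_{i=1}^9\#\{27\le n\le N(X):P_i(n)\text{ prime}\}.
\]
Wisde primes with $n<27$ and coincidences contribute $O(1)$.

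Third, I will apply \Cref{conj:bateman-horn} with $\deg P_i=3$. This gives each summand $\sim C(P_i)N/(3\log N)$. Since $\log N\sim\frac13\log X$ and $N\sim 2^{-1/3}X^{1/3}$, the summand is $\sim C(P_i)2^{-1/3}X^{1/3}/\log X$. Summing over $i$ gives the stated $C$. The numerical value $14.12$ I will take as a computed approximation of the Euler products, and I would not prove it. Infinitude follows because $C>0$, since each $C(P_i)>0$ for polynomials without fixed divisor.

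The main obstacle is only the bookkeeping in the second step. One must make sure the window $2n^3\le P_i(n)<2(n+1)^3$ holds for all $n\ge27$, so that the correspondence $p\leftrightarrow n$ is exact, and that overlaps between the families are negligible. Both are elementary checks on explicit cubics.
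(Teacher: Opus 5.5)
Your proposal is correct and follows the paper's proof: apply Bateman--Horn to each $P_i$, use the window $2n^3<P_i(n)<2(n+1)^3$ for $n\ge27$ together with \Cref{thm:wisde-classification} to identify the wisde primes, up to $O(1)$, with the prime values $P_i(n)$, and sum the nine asymptotics. The differences are minor. The paper obtains pairwise distinctness from an explicit strict ordering of $P_1(n),\ldots,P_9(n)$ valid for every $n\ge27$, whereas you observe that coincidences occur for only finitely many $n$; and you spell out the Bateman--Horn hypotheses and the positivity of $C$, which the paper leaves implicit in the filtering step of the classification.
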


\begin{proof}
    Since $P_i(n)=2n^3+O(n^2)$, the Bateman--Horn conjecture gives, for each $1\le i\le9$,
    \[
        \#\{n\ge27:P_i(n)\le X,\ P_i(n)\text{ prime}\}
        \sim 2^{-1/3}C(P_i)\frac{X^{1/3}}{\log X}.
    \]
    Moreover, one can check that the cubics in \eqref{eq:cubic-families} satisfy
    \[
        \begin{aligned}
            2n^3&<P_1(n)<P_3(n)<P_2(n)<P_8(n)<P_5(n),\\
            P_5(n)&<P_9(n)<P_6(n)<P_4(n)<P_7(n)<2(n+1)^3,
        \end{aligned}
    \]
    for every $n\ge27$. In particular, the integers $P_1(n),\ldots,P_9(n)$ are pairwise distinct in $[2n^3,2(n+1)^3)$. By \Cref{thm:wisde-classification}, their prime values account for all but finitely many wisde primes, so summing over $i$ proves the result.
\end{proof}

\begin{remark}
    Note that $P_7(x)=P_4(x)+2$ and that $P_4(x)P_7(x)$ has no fixed divisor. Thus, under the two-polynomial Bateman--Horn conjecture, there are infinitely many \emph{twin wisde primes}. For example, $n=33$ gives the pair $(78437,78439)$. We find this fact remarkably unhelpful and equally exciting.
\end{remark}

Thus, the Bateman--Horn conjecture implies the infinitude of wisde primes conjectured in \cite[Conjecture~6.6]{aubry_minimal_2026}, but proving this unconditionally would require a major breakthrough in number theory: showing that an irreducible cubic polynomial without a fixed divisor takes prime values infinitely often. Viewed as the $D=0$ case of \Cref{cor:deficit-alg}, this is part of a more general barrier: for any fixed $D$, determining how often the algorithm computes $\delta(p)$ requires understanding the prime values of finitely many cubics.

When $D$ is allowed to grow with $p$, however, it is natural to ask how often this algorithm actually computes $\delta(p)$, rather than certifying that $\delta(p)$ is not within $D$ of its upper bound. In the next section, we show that even when $D$ grows nearly as quickly as the size condition allows, the algorithm computes $\delta(p)$ for only an asymptotically negligible proportion of primes.

\section{Prime Characteristics Admitting Near-Maximal Frobenius Distance}
\label{sec:wisde-deficits}

In this section, we are concerned with the gap between $\delta(p)$ and its upper bound $\lfloor \sqrt[3]{p/2}\rfloor$. For each prime $p$, we define
\[
    \Delta(p)\colonequals \left \lfloor\sqrt[3]{\frac p2} \right \rfloor-\delta(p)
\]
and study the distribution of primes $p$ for which $\Delta(p)$ is small using \Cref{thm:wt-param}. Aubry, Oyono, and Vincent conjecture that $\Delta(p)=O(\log p)$ in \cite[Conjecture~6.8]{aubry_minimal_2026}.\footnote{They compute $\Delta(p)$ exhaustively for $p\le22,000$ and $93,312\le p\le101,306$; elsewhere up to $265,207$, a sieve only classifies wisde primes \cite[Sections~5.3 and~6.2]{aubry_minimal_2026}. On these ranges, $\log p$ and $p^{1/6}$ are not meaningfully distinguishable.} We show that the typical gap is substantially larger: for almost all primes $p$,
\[
    \Delta(p)>p^{1/6}(\log p)^{-1/2-\varepsilon}
\]
for every $\varepsilon>0$. In particular, $\Delta(p)\ge p^{1/6-o(1)}$ for almost all primes.

The proof is a counting argument based on \Cref{thm:fixed-distance-cubics}. For fixed $D\ge1$, the union
\[
    \bigcup_{0\le d\le D}\mathcal P_d
\]
contains $O(D^5)$ cubics, and all but finitely many primes $p\le X$ satisfying $\Delta(p)\le D$ occur among their values in $[1,X]$. Each cubic takes $O(X^{1/3})$ integer values in this interval, so these primes lie in a set of size $O(X^{1/3}D^5)$. Taking $D=X^{2/15-\varepsilon}$ for any fixed $\varepsilon>0$ gives
\[
    X^{1/3}D^5=X^{1-5\varepsilon}=o(\pi(X)),
\]
where $\pi(X)$ denotes the prime-counting function.
This yields the preliminary bound $\Delta(p)\ge p^{2/15-o(1)}$ for almost all primes $p$. However, this argument ignores the additional concentration imposed by the parameterization in \Cref{thm:wt-param}.

\begin{figure}[ht]
    \centering
    \includegraphics[width=\linewidth]{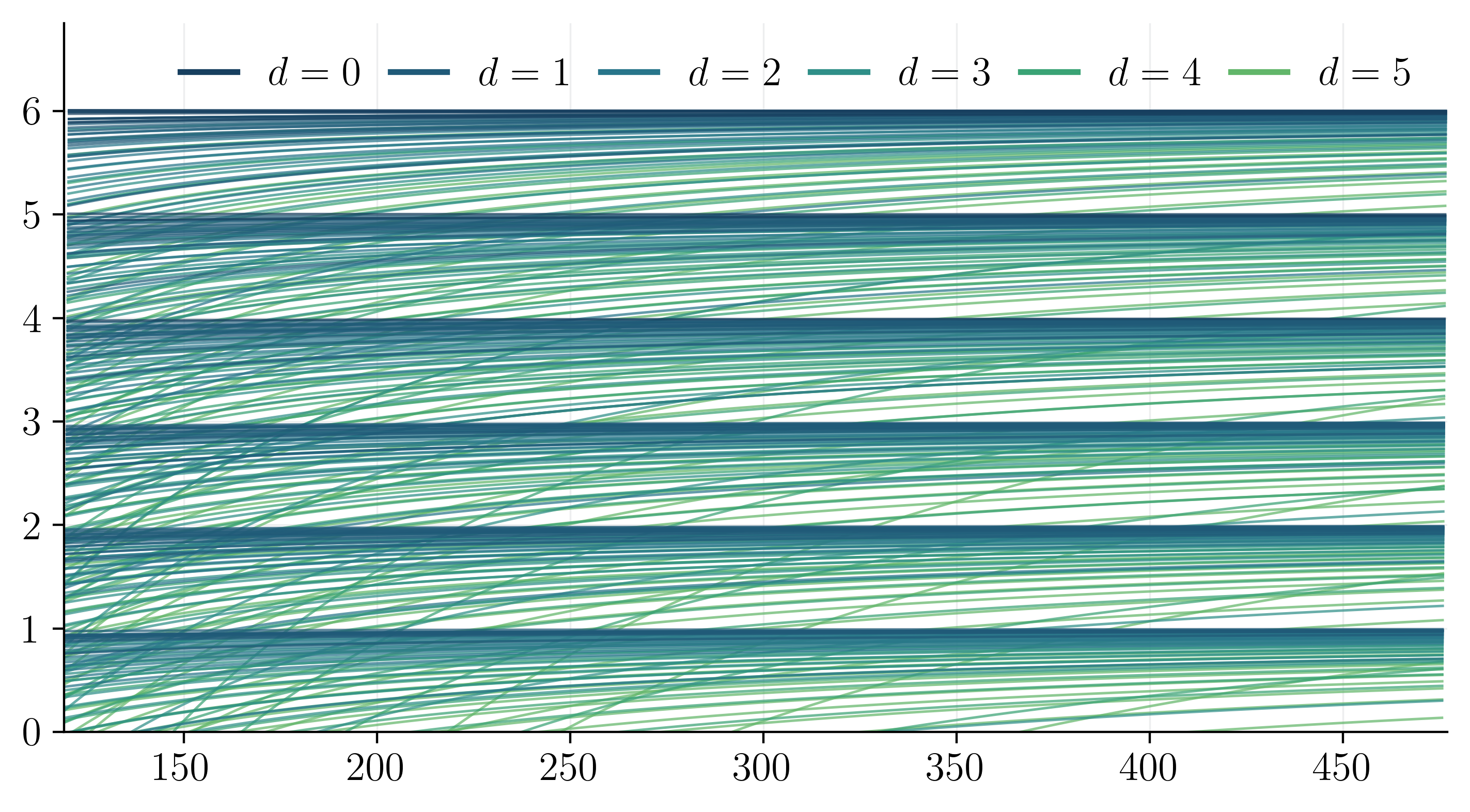}
    \caption{A representative sample of the irreducible, primitive cubics $P_\lambda\in\mathcal P_d$ for $0\le d\le5$. We plot $(P_\lambda(x)-2(x+d)^3)/(x+d)^2$ against $x+d$ as in \Cref{fig:cubic-family-phase}, with color indicating $d$.}
    \label{fig:cumulative-deficit-cubics}
\end{figure}

The concentration visible in \Cref{fig:cumulative-deficit-cubics} is what improves this count. For each $m\ge0$, consider the interval
\[
    I_m\colonequals[2m^3,2(m+1)^3).
\]
As these consecutive intervals range up to $X$, \Cref{thm:fixed-distance-cubics} shows that every relevant prime in $I_m$ arises as $P_\lambda(m-d)$ for some $d\le D$ and $P_\lambda\in\mathcal P_d$. The quadratic coefficient of $P_\lambda$ is $\wt(\lambda)=6d+k$, where $1\le k\le6$, and evaluation at $m-d$ cancels the $6d$-term at quadratic order. As the proof below makes precise, the relevant primes in $I_m$ are therefore confined to six subintervals of total length $O(m(D+1)^2)$.

\begin{theorem}
    \label{thm:deficit-count}
    For all $X, D\in\ZZ_{\ge0}$,
    \[
        \#\{p\le X:p\text{ is prime and }\Delta(p)\le D\}
        =O\left(X^{2/3}(D+1)^2\right),
    \]
    where the implied constant is independent of $X$ and $D$.
\end{theorem}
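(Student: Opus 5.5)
We will count primes interval by interval: for each $m$, bound the number of primes in $I_m=[2m^3,2(m+1)^3)$ with $\Delta(p)\le D$ by $O(m(D+1)^2)$, then sum over $m\le (X/2)^{1/3}$. This gives
\[
    \sum_{m\ll X^{1/3}} m(D+1)^2=O\left(X^{2/3}(D+1)^2\right).
\]

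\emph{Small $m$.} If $m<m_0(D)\colonequals 3000(D+1)^2+D$, the trivial count of all integers in $I_m$ is $O(m^2)=O(m(D+1)^2)$, because $m\ll(D+1)^2$ there. Hence the primes below $2m_0^3$ also contribute $O(X^{2/3}(D+1)^2)$. (If $X<2m_0^3$, the total is at most $X\le X^{2/3}\cdot O((D+1)^2)$, so the bound still holds.) We may therefore assume $m\ge m_0$.

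\emph{Large $m$.} Let $p\in I_m$ be prime with $\Delta(p)=d\le D$. Then $\delta(p)=n\colonequals m-d\ge3000(d+1)^2$. By \Cref{prop:curve_tqf} and \Cref{thm:wt-param}, $p=\disc(nQ_0+h_\lambda)$ for some $\lambda\in\Lambda_d$. Write $\wt(\lambda)=6d+k$ with $1\le k\le6$. The key step is to rewrite \eqref{eq:lambda-discriminant-expansion} around $m=n+d$:
\[
    p=2n^3+(6d+k)n^2+C_1(\lambda)n+C_0(\lambda)
    =2m^3+k m^2+E,
\]
where $E=-6d^2 m+ (\text{terms from }2(m-d)^3,\ (6d+k)(m-d)^2)+C_1 n+C_0$. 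Expanding, the $m^2$ coefficients cancel to $k$. What remains is $E=O\big((d+1)^2m+(d+1)^3\big)$, using the bounds $|C_1(\lambda)|\ll(d+1)^2$ and $|C_0(\lambda)|\ll(d+1)^3$ established in the proof of \Cref{thm:wt-param}. The general coefficient bound $|\text{coef}(h_\lambda)|\le\wt(\lambda)\le6(d+1)$ suffices here. Since $m\ge(d+1)^2$, the term $(d+1)^3$ is at most $m(d+1)$, so $|E|\le c\,m(D+1)^2$ with an absolute constant $c$.

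Hence every such prime lies in $\bigcup_{k=1}^6[2m^3+km^2-cm(D+1)^2,\,2m^3+km^2+cm(D+1)^2]$. This union contains $O(m(D+1)^2)$ integers, which is the desired per-interval bound.

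\emph{Main obstacle.} The main step is verifying the quadratic-order cancellation uniformly in $d$: the $d$-dependence in $2(m-d)^3+(6d+k)(m-d)^2$ must leave only $O(d^2m+d^3)$ error. The $m^2$ terms give $-6d+6d+k=k$. The $m$ terms give $6d^2-2d(6d+k)=O(d^2)$, and the constant terms are $O(d^3)$. The other ingredient is the uniform $O((d+1)^2)$ and $O((d+1)^3)$ bounds on $C_1$ and $C_0$, which I would take directly from the $Q_0$-chart bounds in the proof of \Cref{thm:wt-param}: $|C_1|\le24(6e)^2$ and $|C_0|\le8(6e)^3$.
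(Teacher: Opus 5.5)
Your proposal is correct and follows the paper's proof. Both arguments apply \Cref{thm:wt-param} (which the paper invokes in its packaged form, \Cref{thm:fixed-distance-cubics}) once $n\ge 3000(d+1)^2$, re-expand \eqref{eq:lambda-discriminant-expansion} around $m=n+d$ to confine the relevant primes in $I_m$ to six windows of length $O(m(D+1)^2)$, and sum over $m\le (X/2)^{1/3}$; the only difference is in the small range, where you use the trivial per-interval bound $\#I_m=O(m^2)=O(m(D+1)^2)$ for $m<3000(D+1)^2+D$ instead of the paper's count $O(\min\{X,(D+1)^6\})\le O(X^{2/3}(D+1)^2)$.
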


\begin{proof}
    Fix $p\le X$ and define $d\colonequals\Delta(p)\le D$ as well as $n\colonequals\delta(p)$. Suppose first that $n\ge3000(d+1)^2$. By \Cref{thm:fixed-distance-cubics}, we then have $p=P_\lambda(n)$ for some $P_\lambda\in\mathcal P_d$. Since $\lambda\in\Lambda_d$, each $\lambda_i$ is $O(d+1)$, and the expansion in \eqref{eq:lambda-discriminant-expansion} gives
    \[
        p = P_\lambda(n) =
        2n^3+\wt(\lambda)n^2 +O\bigl(n(d+1)^2+(d+1)^3\bigr),
    \]
    where $\wt(\lambda)=6d+k$ for some $1\le k\le6$.
    Writing this cubic in terms of $m\colonequals n+d=\lfloor\sqrt[3]{p/2}\rfloor$ yields
    \[
        p=2m^3+km^2+O\left(m(d+1)^2\right),
    \]
    where we use $m\ge d+1$ to absorb the terms of order $(d+1)^3$. Since $d\le D$, for each of the six possible values of $k$, every such prime lies within $O(m(D+1)^2)$ of $2m^3+km^2$.
    Thus, for each fixed $m$, these primes lie in six subintervals of $I_m$, each of length $O(m(D+1)^2)$.

    Now let $I_m$ range over the consecutive intervals up to $X$. Since $p\le X$ implies $m\le\lfloor(X/2)^{1/3}\rfloor$, summing this bound over $m$ gives
    \begin{align*}
        O\left((D+1)^2
        \sum_{0\le m\le(X/2)^{1/3}}m\right)
        &=O\left(X^{2/3}(D+1)^2\right).
    \end{align*}

    It remains to account for the primes too small for
    \Cref{thm:fixed-distance-cubics} to apply. If $n<3000(d+1)^2$, then $\lfloor\sqrt[3]{p/2}\rfloor=n+d=O((D+1)^2)$, and consequently $p=O((D+1)^6)$. There are therefore $O(\min\{X,(D+1)^6\})$ such primes. Since $\min\{a,b\}\le a^{2/3}b^{1/3}$ for all $a,b\ge0$, this is $O(X^{2/3}(D+1)^2)$, as required.
\end{proof}

\begin{corollary}
    \label{cor:almost-all-deficits}
    For every $\varepsilon>0$,
    \[
        \lim_{X\to\infty}
        \frac{\#\{p\le X:p\text{ is prime and }
        \Delta(p)>p^{1/6}(\log p)^{-1/2-\varepsilon}\}}{\pi(X)}
        =1.
    \]
    In particular, for almost all primes $p$,
    \[
        \Delta(p)\ge p^{1/6-o(1)}.
    \]
\end{corollary}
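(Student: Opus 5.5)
We bound the exceptional primes with \Cref{thm:deficit-count} and show that they are $o(\pi(X))$. Fix $\varepsilon>0$ and call a prime $p\le X$ \emph{exceptional} if $\Delta(p)\le p^{1/6}(\log p)^{-1/2-\varepsilon}$. The threshold increases with $p$ once $p$ is large, so every exceptional prime $p\le X$ satisfies $\Delta(p)\le D_X$, where
\[
    D_X\colonequals\left\lfloor X^{1/6}(\log X)^{-1/2-\varepsilon}\right\rfloor.
\]
The finitely many small $p$ at which the threshold is not yet increasing contribute only $O(1)$.

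Applying \Cref{thm:deficit-count} with $D=D_X$, the number of exceptional primes up to $X$ is
\[
    O\left(X^{2/3}(D_X+1)^2\right)=O\left(X^{2/3}\cdot X^{1/3}(\log X)^{-1-2\varepsilon}\right)=O\left(\frac{X}{(\log X)^{1+2\varepsilon}}\right),
\]
since $D_X\to\infty$ and so $(D_X+1)^2\le 4D_X^2$ for large $X$. By the prime number theorem, or merely Chebyshev's bound $\pi(X)\gg X/\log X$, this count is $O(\pi(X)(\log X)^{-2\varepsilon})=o(\pi(X))$. The complementary proportion therefore tends to $1$, which is the displayed limit.

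For the second claim, $(\log p)^{-1/2-\varepsilon}=p^{-o(1)}$, so $\Delta(p)>p^{1/6}(\log p)^{-1/2-\varepsilon}$ gives $\Delta(p)\ge p^{1/6-o(1)}$. Taking, say, $\varepsilon=1$ shows that this holds on a set of primes of relative density one.

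The only point needing care is the uniformity in $D$. The statement of \Cref{thm:deficit-count} asserts that the implied constant is independent of $X$ and $D$, so taking $D$ to depend on $X$ is legitimate. The monotonicity reduction from the threshold depending on $p$ to the single bound $D_X$ is routine.
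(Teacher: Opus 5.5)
Your proposal is correct and follows the paper's proof essentially step for step. You reduce every exceptional prime to $\Delta(p)\le D=\lfloor X^{1/6}(\log X)^{-1/2-\varepsilon}\rfloor$, apply \Cref{thm:deficit-count} (whose constant is uniform in $D$) to get $O(X/(\log X)^{1+2\varepsilon})$ exceptions, and compare with $\pi(X)\asymp X/\log X$; your separate treatment of the small primes where the threshold is not yet increasing is just a more explicit version of the paper's ``for sufficiently large $X$'' step.
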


\begin{proof}
    Fix $\varepsilon>0$ and consider the exceptional primes $p\le X$ satisfying $\Delta(p)\le p^{1/6}(\log p)^{-1/2-\varepsilon}$. For sufficiently large $X$, taking
    \[
        D=\left\lfloor X^{1/6}(\log X)^{-1/2-\varepsilon}\right\rfloor
    \]
    ensures that every exceptional prime satisfies $\Delta(p)\le D$. By \Cref{thm:deficit-count}, the number of exceptional primes is
    \[
        O\left(X^{2/3}(D+1)^2\right)
        =O\left(\frac{X}{(\log X)^{1+2\varepsilon}}\right).
    \]
    Since $\pi(X)\sim X/\log X$, their proportion among the primes up to $X$ is $O((\log X)^{-2\varepsilon})=o(1)$, as desired.
\end{proof}

The lower bound in \Cref{cor:almost-all-deficits} raises the complementary question of whether, for almost all primes $p$,
\[
    \Delta(p)\le p^{1/6+o(1)}.
\]
Although we are far from proving this upper bound, the following heuristic suggests that it may hold.

Recall the intervals
\[
    I_m=[2m^3,2(m+1)^3),
\]
which have length $6m^2+6m+2$. For $D\ge1$ and $m$ sufficiently large relative to $D^2$, the proof of \Cref{thm:deficit-count} confines the primes $p\in I_m$ satisfying $\Delta(p)\le D$ to six subintervals of total length $O(mD^2)$. At $D \approx m^{1/2}$, we know this bound becomes $O(m^2)$, the same order as the length of $I_m$. If $p$ is chosen uniformly from the primes in $I_m$, and if the union of these subintervals has the same average prime density as $I_m$, then we expect
\[
    \Pr\bigl[\Delta(p)\le D\bigr]
    =O\left(\frac{D^2}{m}\right).
\]

This estimate is only a heuristic upper bound for the proportion of primes satisfying $\Delta(p)\le D$; it gives no upper bound on $\Delta(p)$, since a prime in one of the six subintervals need not equal any candidate value $P_\lambda(m-d)$. A converse prediction requires two further heuristic assumptions. First, after identifying equal cubics and discarding those that are reducible or have a fixed divisor, we assume that $\Theta(d^4)$ viable cubics remain in $\mathcal P_d$. The range $0\le d\le D$ then supplies $\Theta(D^5)$ candidate evaluations of these cubics, counted with multiplicity. Second, we assume that, apart from the collisions forced by their confinement to $O(mD^2)$ integers, these values are sufficiently well distributed that, by the time $D$ has order $m^{1/2}$, their distinct values contain all but a negligible proportion of the primes in $I_m$. Under these assumptions, almost every prime $p\in I_m$ satisfies $\Delta(p)\le m^{1/2+o(1)}$. Since $p$ has order $m^3$, this predicts $\Delta(p)\le p^{1/6+o(1)}$.

The assumptions on the number and distribution of viable cubic values are unproved, and not clearly available even under the Bateman--Horn conjecture. The argument therefore suggests, but does not prove, that $\Delta(p)=p^{1/6+o(1)}$ for almost all primes. Combining this with \Cref{cor:almost-all-deficits} gives the following conjecture.

\begin{conjecture}
    \label{conj:typical-deficit}
    For every $\varepsilon>0$,
    \[
        \lim_{X\to\infty}
        \frac{\#\{p\le X:p\text{ is prime and }
        p^{1/6-\varepsilon}\le\Delta(p)\le p^{1/6+\varepsilon}\}}{\pi(X)}
        =1.
    \]
\end{conjecture}

\newpage
\appendix
\section{Proof of \texorpdfstring{\Cref{prop:curve_tqf}}{Proposition~\ref{prop:curve_tqf}}}
\label{app:curve_tqf}
\begin{proof}
    Suppose that $E/\overline{\mathbb F}_p$ is a supersingular elliptic curve with $\delta_E=n$, and let $\mathcal O\subset B_{p,\infty}$ be a maximal order isomorphic to its endomorphism ring. Let $Q_E$ be the rational quadratic form on $\bigwedge^2\mathcal O^T$ obtained by scaling the determinant form by $1/(4p)$. In other words, for $\alpha,\beta\in\mathcal O^T$,
    \begin{equation}
        \label{eq:appendix-normalized-determinant}
        Q_E(\alpha\wedge\beta)
        =\frac{\det(\alpha,\beta)}{4p}.
    \end{equation}
    We will show that $Q_E$ is a positive definite integral ternary quadratic form whose values are exactly the normalized determinants appearing in \Cref{prop:aov_delta_E}, and then compute its discriminant.
    Write $\alpha=2x-\operatorname{trd}(x)$ and $\beta=2y-\operatorname{trd}(y)$ for some $x,y\in\mathcal O$. Since scalars commute with every element of $\mathcal O$,
    \[
        [\alpha,\beta]
        =[2x-\operatorname{trd}(x),2y-\operatorname{trd}(y)]
        =4[x,y],
    \]
    so $[\alpha,\beta]/4\in [\mathcal O,\mathcal O]$. The commutator ideal is the unique two-sided prime ideal of $\mathcal O$ above $p$, so the reduced norm of this element is divisible by $p$ \cite[Section~42.4.6]{voight_quaternion_2021}.
    We now compute the reduced norm of $[\alpha,\beta]/4$ in terms of the determinant form. Let $t\colonequals\operatorname{trd}(\alpha\overline{\beta})/2$. Since $\alpha$ and $\beta$ have reduced trace zero, we have $\operatorname{trd}(\alpha\beta)=-2t$ and $\alpha\beta+\beta\alpha=-2t$.
    Thus,
    \[
        \frac{[\alpha,\beta]}{4}
        =\frac{\alpha\beta+t}{2}.
    \]
    Using $\operatorname{nrd}(q+a)=\operatorname{nrd}(q)+a\operatorname{trd}(q)+a^2$ for $a\in\mathbb Q$, we obtain
    \begin{align*}
        4\operatorname{nrd}\left(\frac{[\alpha,\beta]}{4}\right)
        &=\operatorname{nrd}(\alpha\beta+t)\\
        &=\operatorname{nrd}(\alpha)\operatorname{nrd}(\beta)
        +t\operatorname{trd}(\alpha\beta)+t^2\\
        &=\operatorname{nrd}(\alpha)\operatorname{nrd}(\beta)-t^2
        =\det(\alpha,\beta).
    \end{align*}
    Therefore,
    \[
        Q_E(\alpha\wedge\beta)
        =\frac{1}{p}\operatorname{nrd}\left(\frac{[\alpha,\beta]}{4}\right)
        =\frac{\operatorname{nrd}([x,y])}{p}.
    \]
    So, for every $\alpha,\beta\in\mathcal O^T$, the normalized commutator pairing identifies $Q_E(\alpha\wedge\beta)$ with a reduced norm divided by $p$; in particular, this value is integral.

    Fix an ordered basis $e_1,e_2,e_3$ of $\mathcal O^T$, and let $G_{\mathcal O^T}$ be the Gram matrix of the reduced norm. With respect to the induced basis
    \[
        (e_2\wedge e_3,e_3\wedge e_1,e_1\wedge e_2)
    \]
    of $\bigwedge^2\mathcal O^T$, the determinant form has Gram matrix $\operatorname{adj}(G_{\mathcal O^T})$. Hence $Q_E$ has Gram matrix
    \[
        \frac{1}{4p}\operatorname{adj}(G_{\mathcal O^T}).
    \]
    Since the reduced norm is positive definite on $B_{p,\infty}$, both $G_{\mathcal O^T}$ and its adjugate are positive definite. Therefore, $Q_E$ is positive definite.

    In rank $3$, every element of $\bigwedge^2\mathcal O^T$ has the form $\alpha\wedge\beta$ for some $\alpha,\beta\in\mathcal O^T$. Hence $Q_E$ is integral. Equation~\eqref{eq:appendix-normalized-determinant} then identifies its nonzero values with the normalized determinants in \Cref{prop:aov_delta_E}. Therefore, $\min(Q_E)=\delta_E=n$.

    Finally, $\det(G_{\mathcal O^T})=4p^2$ \cite[Proposition~4.1]{aubry_minimal_2026}, and therefore
    \[
        \disc(Q_E)
        =4\det\left(\frac{\operatorname{adj}(G_{\mathcal O^T})}{4p}\right)
        =\frac{4\det(G_{\mathcal O^T})^2}{(4p)^3}
        =p,
    \]
    which proves the forward direction.

    Conversely, let $Q$ be a positive definite integral ternary quadratic form with $\min(Q)=n$ and $\disc(Q)=p$. Let $\mathcal O$ be the even Clifford algebra
    \[
        \operatorname{Clf}^0(Q)
        \colonequals
        \operatorname{span}_{\ZZ}
        \left\{
            \vec x_1\cdots\vec x_{2m}
            :m\ge 0,\ \vec x_i\in\ZZ^3
        \right\}
        \subseteq \operatorname{Clf}(Q),
    \]
    where
    \[
        \operatorname{Clf}(Q)
        \colonequals
        T(\ZZ^3)\Big/
        \left\langle
        \vec x\otimes\vec x-Q(\vec x):\vec x\in\ZZ^3
        \right\rangle,
    \]
    and $T(\ZZ^3)$ is the tensor algebra of $\ZZ^3$.

    The even Clifford correspondence makes $\mathcal O$ a quaternion order of reduced discriminant $\disc(Q)=p$ \cite[Main Theorem~22.1.1]{voight_quaternion_2021}. Set $B=\mathcal O\otimes_{\ZZ}\mathbb Q$. Because $Q$ is positive definite, $B$ is ramified at $\infty$. Moreover, the discriminant of $B$ divides the reduced discriminant $p$ of $\mathcal O$. It cannot equal $1$, since the quaternion algebra over $\mathbb Q$ of discriminant $1$ is split. Therefore, $B\cong B_{p,\infty}$, and $\mathcal O$ is maximal because its reduced discriminant equals the discriminant of $B$ \cite[Theorem~15.5.5]{voight_quaternion_2021}.

    We now show that the normalized determinant form associated with $\mathcal O$ is integrally equivalent to $Q$. Let
    \[
        Q=aX^2+bY^2+cZ^2+rYZ+sXZ+tXY,
    \]
    so that
    \[
        G_Q=
        \begin{pmatrix}
            a&t/2&s/2\\
            t/2&b&r/2\\
            s/2&r/2&c
        \end{pmatrix}.
    \]
    Let $1,i,j,k$ be the Clifford basis of $\mathcal O$ from \cite[(22.1.2)]{voight_quaternion_2021}. For an element $\alpha=q_0+xi+yj+zk$, the trace and reduced norm are
    \begin{align*}
        \operatorname{trd}(\alpha)
        ={}&2q_0+rx+sy+tz,\\
        \operatorname{nrd}(\alpha)
        ={}&q_0^2+rq_0x+sq_0y+tq_0z
        +bcx^2+acy^2+abz^2\\
        &+(rs-ct)xy+(rt-bs)xz+(st-ar)yz,
    \end{align*}
    by \cite[(22.3.18)]{voight_quaternion_2021}. In particular, $\operatorname{trd}(i)=r$, $\operatorname{trd}(j)=s$, and $\operatorname{trd}(k)=t$. Recall that $\tau(x)=2x-\operatorname{trd}(x)$ induces an isomorphism $\mathcal O/\ZZ\cong\mathcal O^T$. Since $i,j,k$ form a basis of $\mathcal O/\ZZ$, their images
    \[
        \tau(i)=2i-r,
        \qquad
        \tau(j)=2j-s,
        \qquad
        \tau(k)=2k-t
    \]
    form a basis of $\mathcal O^T$. To compute $G_{\mathcal O^T}$ with respect to the ordered basis $(\tau(i),\tau(j),\tau(k))$, let $u,v,w\in\ZZ$ and set
    \[
        \gamma=u\tau(i)+v\tau(j)+w\tau(k)
        =-(ru+sv+tw)+2ui+2vj+2wk.
    \]
    Substituting $q_0=-(ru+sv+tw)$, $x=2u$, $y=2v$, and $z=2w$ into the reduced-norm formula above yields
    \begin{align*}
        \operatorname{nrd}(\gamma)
        ={}&-(ru+sv+tw)^2
        +4bcu^2+4acv^2+4abw^2\\
        &+4(rs-ct)uv+4(rt-bs)uw+4(st-ar)vw\\
        ={}&(4bc-r^2)u^2+(4ac-s^2)v^2+(4ab-t^2)w^2\\
        &+2(rs-2ct)uv+2(rt-2bs)uw+2(st-2ar)vw.
    \end{align*}
    Reading off the coefficients, we obtain
    \[
        G_{\mathcal O^T} =
        \begin{pmatrix}
            4bc-r^2&rs-2ct&rt-2bs\\
            rs-2ct&4ac-s^2&st-2ar\\
            rt-2bs&st-2ar&4ab-t^2
        \end{pmatrix} = 4\operatorname{adj}(G_Q).
    \]
    Thus, with respect to the basis
    \[
        (\tau(j)\wedge\tau(k),
            \tau(k)\wedge\tau(i),
        \tau(i)\wedge\tau(j)),
    \]
    the normalized determinant form on $\bigwedge^2\mathcal O^T$ has Gram matrix
    \[
        \frac{1}{4p}\operatorname{adj}(G_{\mathcal O^T})
        =\frac{1}{4p}\operatorname{adj}\left(4\operatorname{adj}(G_Q)\right)
        =\frac{4\det(G_Q)}{p}G_Q
        =G_Q,
    \]
    where the last equality uses $\det(G_Q)=\disc(Q)/4=p/4$. Thus, the normalized determinant form attached to $\mathcal O$ is integrally equivalent to $Q$.

    By the Deuring correspondence, there is a supersingular elliptic curve $E/\overline{\mathbb F}_p$ whose endomorphism ring is isomorphic to $\mathcal O$. This isomorphism identifies the normalized determinant form associated with $E$ with that attached to $\mathcal O$, and hence with $Q$. Therefore, \Cref{prop:aov_delta_E} gives $\delta_E=\min(Q)=n$, as desired.
\end{proof}

\section{A 256-Bit Example for Remark~\ref{rem:param_fam_alg}}
\label{app:param_ex}
\begin{example}
    The computational supplement \cite{swanson_computational_2026} implements the construction in \Cref{rem:param_fam_alg}. Taking
    \begin{align*}
        n&=19721733612227209092649015,\\
        D&=1555787106686609923771791384727601443637276292476959,\\
        t&=37265851146608196127291335
    \end{align*}
    gives a $256$-bit prime $p=Dt-(n+1)$. The computation returns a supersingular curve $E/\mathbb F_{p^2}$ with $\delta_E=n\approx 2^{84}$.
    If $z$ denotes the field generator used in the supplement, then $j(E)=uz+v$, where
    \[
        \resizebox{\linewidth}{!}{$
            \begin{aligned}
                u&=46191504497042202305927789725709644307316017629766727501491860056507417597253,\\
                v&=54141910635733036816886759961912771019391613041790050170790834885492281030672.
            \end{aligned}
        $}
    \]
    Finding the parameters and maximal order took $0.388$ seconds; the constructive Deuring step, performed using the implementation accompanying \cite{cremona_deuring_2024}, took $13$ minutes.

\end{example}

\section{Details for \texorpdfstring{\Cref{lem:a3}}{Lemma~\ref{lem:a3}}}
\label{app:a3-claims}

\noindent\textbf{Claim~\ref{clm:a3-domain-comparison}.}
Suppose $0\le\epsilon\le3/50$. For every $(u,v,w)\in D_\epsilon$, there is some $u_D\in\RR$ such that $(u_D,v,w)\in D$ with $|u-u_D|\le\epsilon$ and
\[
    |R(u_D,v,w)-R(u,v,w)|\le4\epsilon.
\]

\begin{proof}
    Fix $(u,v,w)\in D_\epsilon$ and set $u_{\min}\colonequals\max\{-1,v+w-2\}$. Then $D$ contains precisely the points $(t,v,w)$ with $u_{\min}\le t\le1$. On the other hand, the inequalities defining $D_\epsilon$ give
    \[
        u_{\min}-\epsilon\le u\le1+\epsilon.
    \]
    Thus, there is some $u_D\in[u_{\min},1]$ such that $|u-u_D|\le\epsilon$.

    For every $t$ between $u$ and $u_D$, we have $|t|\le1+\epsilon$ and hence
    \[
        \left|\frac{\partial R}{\partial u}(t,v,w)\right|
        =|-2t+vw|
        \le3+2\epsilon<4.
    \]
    The mean value theorem now gives
    \[
        |R(u_D,v,w)-R(u,v,w)|\le4|u_D-u|\le4\epsilon,
    \]
    as desired.
\end{proof}

\noindent\textbf{Claim~\ref{clm:a3-distance}.}
If $(u,v,w)\in D$, then
\[
    d_1((u,v,w),\mathcal A)\le4R(u,v,w).
\]

\begin{proof}
    We divide $D$ according to the sign of $u$. Suppose first that $0\le u,v,w\le1$. In this region, we may discard the two points of $\mathcal A$ with first coordinate $-1$. Indeed, since $|u-1|\le|u+1|$, the point $(1,0,1)$ is at least as close as $(-1,0,1)$, and $(1,1,0)$ is at least as close as $(-1,1,0)$. Hence we may choose a nearest point among the four points of $\mathcal A$ in $\{0,1\}^3$. Looking at the points in $\mathcal A$, we end up with two options: either $(1,1,1)$ is a nearest point, or the nearest point has exactly two $1$s.

    In the first case, we write
    \[
        (u,v,w)=(1-\alpha,1-\beta,1-\gamma),
    \]
    where $\alpha,\beta,\gamma\ge0$. Comparing with $(0,1,1)$, $(1,0,1)$, and $(1,1,0)$ shows that $\alpha,\beta,\gamma\le1/2$. Expanding $R$ gives
    \begin{align*}
        R(u,v,w)
        ={}&\alpha+\beta+\gamma-\alpha^2-\beta^2-\gamma^2
        +\alpha\beta+\alpha\gamma+\beta\gamma-\alpha\beta\gamma\\
        \ge{}&\alpha+\beta+\gamma-\alpha^2-\beta^2-\gamma^2
        \ge\frac12(\alpha+\beta+\gamma).
    \end{align*}

    In the second case, after permuting the coordinates, we may assume that $(1,1,0)$ is a nearest point and write
    \[
        (u,v,w)=(1-\alpha,1-\beta,\gamma).
    \]
    Comparing its distance with the distances to $(1,1,1)$, $(0,1,1)$, and $(1,0,1)$ yields
    \[
        \gamma\le\frac12,
        \qquad
        \alpha+\gamma\le1,
        \qquad
        \beta+\gamma\le1.
    \]
    Therefore,
    \begin{align*}
        R(u,v,w)
        ={}&(2\alpha-\alpha^2-\alpha\gamma) +(2\beta-\beta^2-\beta\gamma)\\
        &+(\gamma-\gamma^2)+\alpha\beta\gamma
        \ge\alpha+\beta+\frac\gamma2
        \ge\frac12(\alpha+\beta+\gamma).
    \end{align*}
    In each case, $\alpha+\beta+\gamma$ is the $L^1$-distance to the chosen point of $\mathcal A$. Thus,
    \[
        d_1((u,v,w),\mathcal A)\le2R(u,v,w)\le4R(u,v,w),
    \]
    as desired.

    It remains to consider $-1\le u\le0$. Set $q=-u$. The condition $(u,v,w)\in D$ becomes
    \[
        0\le q,v,w\le1
        \qquad\text{and}\qquad
        q+v+w\le2.
    \]
    A nearest point of $\mathcal A$ is then one of $(0,1,1)$, $(-1,0,1)$, and $(-1,1,0)$. Among these three points, the zero coordinate of a nearest point occurs in a position where the corresponding coordinate of $(q,v,w)$ is smallest. Permuting $q,v,w$ preserves both $R(-q,v,w)$ and the displayed conditions, while permuting the distances to these three points. We may therefore suppose that $q\le v,w$. The corresponding nearest point is $(0,1,1)$, and its distance is $2-v-w+q$.

    On this region, \eqref{eq:a3-distance-claim} is equivalent to the nonnegativity of the difference
    \[
        H(v,w,q)\colonequals4R(-q,v,w)-2+v+w-q
    \]
    on the polytope
    \[
        P\colonequals
        \{(v,w,q)\in[0,1]^3:v+w+q\le2,\ q\le v,\ q\le w\}.
    \]
    For $(v,w,q)\in P$, we have
    \[
        -\frac14\nabla^2H=
        \begin{pmatrix}
            2&q&w\\
            q&2&v\\
            w&v&2
        \end{pmatrix},
    \]
    which is positive semidefinite since it is symmetric and diagonally dominant with nonnegative diagonal entries. Thus, $H$ is concave on $P$.
    The seven vertices of $P$ are thus
    \[
        (0,0,0),(1,0,0),(0,1,0),(1,1,0),
        \left(1,\tfrac12,\tfrac12\right),
        \left(\tfrac12,1,\tfrac12\right),
        \left(\tfrac23,\tfrac23,\tfrac23\right),
    \]
    where $H$ takes the values $6,3,3,0,0,0$, and $4/27$, respectively. Since these values are nonnegative, concavity thus gives $H\ge0$ on $P$, and this proves the claim.
\end{proof}


\noindent\textbf{Claim~\ref{clm:a3-chart-change}.}
The form $Q$ is integrally equivalent to $nQ_0+h$, where
\[
    h\colonequals aY^2+bZ^2-rYZ-sXZ-tXY
\]
for some integers satisfying $0\le a\le b<57e$ and $|r|,|s|,|t|<57e$.

\begin{proof}
    Recall that we start with some $\vec c=(c_1,c_2,c_3)\in\mathcal A$ and integers $k_1,k_2,k_3$ such that the cross coefficients of $Q$ are $nc_i+k_i$, with
    \[
        0\le a_0\le b_0,
        \quad
        a_0+b_0\le3e,
        \quad
        |k_1|+|k_2|+|k_3|<51e.
    \]
    We now wish to move to the $Q_0$-chart by a unimodular change of coordinates that fixes $(1,0,0)$. For each $\vec c\in\mathcal A$, define
    \[
        Q_{\vec c}\colonequals X^2+Y^2+Z^2+c_1YZ+c_2XZ+c_3XY
    \]
    so that $Q=nQ_{\vec c}+h_0$ with
    \[
        h_0\colonequals a_0Y^2+b_0Z^2+k_1YZ+k_2XZ+k_3XY.
    \]
    For each $\vec c\in\mathcal A$, \Cref{tab:a3-changes} lists a change of variables $\phi_{\vec c}$ that carries $Q_{\vec c}$ to $Q_0$. Its effect on $h_0$ is recorded by writing
    \[
        h_0\circ\phi_{\vec c}
        =AY^2+BZ^2+K_1YZ+K_2XZ+K_3XY.
    \]
    \begin{table}[t]
        \centering
        \renewcommand{\arraystretch}{1.25}
        \[
            \begin{array}{cccc}
                \toprule
                \vec c&\phi_{\vec c}(X,Y,Z)&(A,B)&(K_1,K_2,K_3)\\
                \midrule
                (1,1,1)
                &(X,Y,Z)
                &(a_0,b_0)
                &(k_1,k_2,k_3)\\
                (0,1,1)
                &(X+Z,Y,-Z)
                &(a_0,b_0-k_2)
                &(k_3-k_1,-k_2,k_3)\\
                (1,0,1)
                &(X,Y+Z,-Z)
                &(a_0,a_0+b_0-k_1)
                &(2a_0-k_1,k_3-k_2,k_3)\\
                (1,1,0)
                &(X,-Y,Y+Z)
                &(a_0+b_0-k_1,b_0)
                &(2b_0-k_1,k_2,-k_3+k_2)\\
                (-1,0,1)
                &(X,Y+Z,Z)
                &(a_0,a_0+b_0+k_1)
                &(2a_0+k_1,k_3+k_2,k_3)\\
                (-1,1,0)
                &(X,Y,Y+Z)
                &(a_0+b_0+k_1,b_0)
                &(2b_0+k_1,k_2,k_3+k_2)\\
                \bottomrule
            \end{array}
        \]
        \caption{Changes of variables carrying each $Q_{\vec c}$ to $Q_0$.}
        \label{tab:a3-changes}
    \end{table}

    Each change of variables is unimodular, fixes $(1,0,0)$ and by construction satisfies
    \[
        Q\circ\phi_{\vec c}
        =nQ_0+AY^2+BZ^2+K_1YZ+K_2XZ+K_3XY.
    \]
    In particular, the formulas in the table, together with $a_0\le b_0$ and the starting bounds, give
    \[
        |A|,|B|,|K_1|,|K_2|,|K_3|
        \le |k_1|+|k_2|+|k_3|+2b_0<57e.
    \]

    It still remains to show $0\le A\le B$. Since $Q\circ\phi_{\vec c}$ has minimum $n$, we have
    \[
        n+A=(Q\circ\phi_{\vec c})(0,1,0)\ge n,
        \qquad
        n+B=(Q\circ\phi_{\vec c})(0,0,1)\ge n.
    \]
    Thus, $A,B\ge0$, and interchanging $Y$ and $Z$ if necessary gives $A\le B$. Renaming $A,B,K_1,K_2,K_3$ to $a,b,-r,-s,-t$, respectively, proves the claim.
\end{proof}

\section{Extending the Cubic Parameterizations Below the Uniform Bound}
\label{app:prime-discriminant-bounds}

\subsection{The General Fixed-Deficit Bound}
\label{app:fixed-deficit-bound}

Fix $d\ge0$. We describe a finite procedure for determining the least integer $N_d$ such that, for every $n\ge N_d$ and every prime $p$ satisfying
\[
    2(n+d)^3<p<2(n+d+1)^3,
\]
there is a positive definite integral ternary quadratic form of minimum $n$ and discriminant $p$ if and only if $p=P_\lambda(n)$ for some $\lambda\in\Lambda_d$.

We divide the computation into two ranges. Set $e=d+1$. Suppose first that $n\ge25e^2$, and let $Q$ be a positive definite integral ternary quadratic form satisfying
\[
    \min(Q)=n,
    \qquad
    2(n+d)^3<\disc(Q)<2(n+e)^3.
\]
By \Cref{lem:a3}, we may choose an equivalent representative $Q=nQ_0+h_\lambda$ with $\lambda\in\ZZ_{\ge0}^6$ and $\lambda_{\min}=0$. After applying the $S_4$-action, we may assume that the first coordinate of $\lambda$ is zero and write
\[
    Q=nQ_0+aY^2+bZ^2-rYZ-sXZ-tXY,
\]
so that
\[
    \lambda=(0,a,b,a+t,b+s,a+b+r)
    \quad\text{and}\quad
    \wt(\lambda)=3a+3b+r+s+t.
\]
A direct expansion gives
\begin{align*}
    \disc(Q)&=2n^3+\wt(\lambda)n^2+C_1(\lambda)n+C_0(\lambda),\\
    C_1(\lambda)&=rs+rt+st-r^2-s^2-t^2+2bt+2as+4ab,\\
    C_0(\lambda)&=-rst-bt^2-as^2.
\end{align*}
Thus, the two bounds on the discriminant are equivalent to
\begin{align}
    (\wt(\lambda)-6d)n^2
    +(C_1(\lambda)-6d^2)n+C_0(\lambda)-2d^3&>0,\\
    \label{eq:fixed-deficit-upper-bound}
    (6e-\wt(\lambda))n^2
    +(6e^2-C_1(\lambda))n+2e^3-C_0(\lambda)&>0.
\end{align}
The first inequality, together with \Cref{lem:weight-trace}, gives $\wt(\lambda)>6d$. If $\wt(\lambda)\le6e$, then $\lambda\in\Lambda_d$, so $\disc(Q)=P_\lambda(n)$ is already detected by $\mathcal P_d$. Thus, the only possible exceptions have $\wt(\lambda)>6e$. For these tuples, the coefficient of $n^2$ in \eqref{eq:fixed-deficit-upper-bound} is negative, but the lower-order terms may still place the discriminant in the target interval when $n$ is small. It remains to find these exceptions.

The proof of \Cref{lem:a3} gives a finite search space for the possible perturbations. In the coordinates used there, they arise from integers satisfying
\[
    0\le a_0\le b_0,\quad
    a_0+b_0\le3e,\quad\text{and}\quad
    |k_1|+|k_2|+|k_3|\le51e-1.
\]
There are $O(e^2)$ choices for $(a_0,b_0)$ and
\[
    1+6\binom{51e-1}{1}+12\binom{51e-1}{2}
    +8\binom{51e-1}{3}=O(e^3)
\]
choices for $(k_1,k_2,k_3)$. For each choice and each of the six $A_3$ centers, we apply the corresponding change of variables in \Cref{tab:a3-changes} and interchange $Y$ and $Z$ when necessary. We retain the resulting tuple only when it belongs to $\ZZ_{\ge0}^6$ and has minimum zero. After removing duplicates, this gives a list of $O(e^5)$ perturbations containing every possibility.

For each tuple in this list with $\wt(\lambda)>6e$, we solve the two discriminant inequalities exactly and enumerate the integers in the intersection of their solution sets. For each such integer $n$, we test whether
\[
    \min(nQ_0+h_\lambda)=n
    \qquad\text{and}\qquad
    \disc(nQ_0+h_\lambda)\text{ is prime}.
\]
This finds all exceptions with $n\ge25e^2$.

For $n<25e^2$, we instead enumerate the sign-normalized Eisenstein-reduced forms in the target discriminant interval and compare their prime discriminants with the values $P_\lambda(n)$ for $\lambda\in\Lambda_d$. Combining the two ranges finds every exception. Therefore, $N_d$ is one more than the largest value of $n$ that occurs.

For a fixed $D\ge0$, this gives a single bound for the algorithm in \Cref{cor:deficit-alg}. Set
\[
    M_D\colonequals\max_{0\le d\le D}(N_d+d).
\]
If $p\ge2M_D^3$ and $m\colonequals\lfloor\sqrt[3]{p/2}\rfloor$, then $m-d\ge N_d$ for every $0\le d\le D$. Hence, we may replace the uniform lower bound in \Cref{cor:deficit-alg} by $p\ge2M_D^3$. Omitting the primality test gives the analogous bound for all discriminants.

\subsection{The \texorpdfstring{$d=0$}{d=0} Computation}
\label{app:d0-computation}

We now apply this procedure with $d=0$ in the range $n\ge25$. The computation finds $42$ distinct pairs $(p,n)$ arising from perturbations of weight at least $7$ for which the form has minimum $n$ and prime discriminant $p$. All but one satisfy $p=P_i(n)$ for some $1\le i\le9$; the exception is
\[
    (p,n)=(39313,26).
\]
The computational supplement \cite{swanson_computational_2026} implements this enumeration.

Together with the analysis of $\mathcal P_0$ in the proof of \Cref{thm:wisde-classification}, this shows that every wisde prime in the remaining range $27\le n<3000$ has the form $P_i(n)$ for some $1\le i\le9$.

It remains to prove the converse in this range. Suppose that $p=P_i(n)$ for some $1\le i\le9$, and consider the following forms.
\begin{equation}
    \label{eq:candidate-forms}
    \begin{array}{clc}
        \toprule
        i&Q_i&\lambda_i\\
        \midrule
        1&nQ_0+Z^2+XZ-2XY&(0,0,1,2,0,1)\\
        2&nQ_0+Z^2-YZ-XY&(0,0,1,1,1,2)\\
        3&nQ_0+Y^2+Z^2+2YZ-2XZ+XY&(0,1,1,0,3,0)\\
        4&nQ_0+Z^2-YZ-2XY&(0,0,1,2,1,2)\\
        5&nQ_0+2Z^2+YZ+2XZ-3XY&(0,0,2,3,0,1)\\
        6&nQ_0+Y^2+Z^2-YZ+XY&(0,1,1,0,1,3)\\
        7&nQ_0+Y^2+Z^2-XZ+XY&(0,1,1,0,2,2)\\
        8&nQ_0+Y^2+Z^2+2YZ-3XZ+XY&(0,1,1,0,4,0)\\
        9&nQ_0+Y^2+2Z^2+2XZ+XY&(0,1,2,0,0,3)\\
        \bottomrule
    \end{array}
\end{equation}
Each vector $\lambda_i$ belongs to $\ZZ_{\ge0}^6$, satisfies $(\lambda_i)_{\min}=0$, and has weight at most $6$. Since $n\ge27$, \Cref{lem:wtmin} gives $\min(Q_i)=n$, while a direct calculation gives $\disc(Q_i)=P_i(n)$. The formulas in \eqref{eq:cubic-families} also show that
\[
    2n^3<P_i(n)<2(n+1)^3
\]
for every $n\ge27$. Thus, if $p=P_i(n)$ is prime, \Cref{prop:curve_tqf} gives a supersingular elliptic curve $E/\overline{\mathbb F}_p$ with $\delta_E=n$. The displayed inequalities give $\lfloor\sqrt[3]{p/2}\rfloor=n$, so the general upper bound for $\delta(p)$ is attained and $p$ is wisde.

Finally, the exceptional pair $(39313,26)$ shows that the hypothesis $n\ge27$ in \Cref{thm:wisde-classification} is sharp. Thus, in the notation of \Cref{app:fixed-deficit-bound}, $N_0=27$.

\newpage
\bibliographystyle{amsplain}
\bibliography{refs}

\end{document}